\documentclass{amsart}
\usepackage[english ]{babel}
\usepackage{amsmath,latexsym,amssymb,verbatim}
\usepackage{pdfsync}

\usepackage[all]{xy}

\usepackage{hyperref}

\newtheorem{theorem}{Theorem}[section]
\newtheorem{lemma}[theorem]{Lemma}
\newtheorem{proposition}[theorem]{Proposition}
\newtheorem{corollary}[theorem]{Corollary}
\newtheorem{observation}[theorem]{Observation}

\newtheorem{note}[theorem]{Note}

\newtheorem{Formula of adjoint functors}[theorem]{Formula of adjoint functors}

\newtheorem*{theorem-non}{Theorem}

\newtheorem{examples}[theorem]{Examples}
\newtheorem{definition}[theorem]{Definition}
\newtheorem{notation}[theorem]{Notation}

\newtheorem{Adjunction formula}[theorem]{\indent\sc Adjunction formula}

\DeclareMathOperator{\limi}{{lim}}
\newcommand{\ilim}[1]{\,\underset{#1}{\underset{\to}{\limi}}\,}
\newcommand{\plim}[1]{\,\underset{#1}{\underset{\leftarrow}{\limi}}\,}

\DeclareMathOperator{\Hom}{{Hom}}

\DeclareMathOperator{\Coker}{{Coker}}
\DeclareMathOperator{\Ker}{{Ker}}
\DeclareMathOperator{\Ima}{{Im}}

\DeclareMathOperator{\R}{{\mathcal R}}
\DeclareMathOperator{\RR}{{\mathcal R}}
\newcommand{\dosflechasa}[3][]{\xymatrix@1{\ar@<1ex>[r]^-{#2}
\ar@<-1ex>[r]_-{#3} & }}
\newcommand{\dosflechas}{{\xymatrix@1  {\ar@<1ex>[r]
\ar@<-1ex>[r] & }}}
\newcommand{\dosflechasab}[3][]{\xymatrix@1  @C10pt {\ar@<1ex>[r]^-{#2}
\ar@<-1ex>[r]_-{#3} & }}
\newcommand{\dosflechasb}{{\xymatrix@1 @C10pt {\ar@<1ex>[r]
\ar@<-1ex>[r] & }}}

\NoCompileMatrices

\input arrow.tex

\begin{document}

\title{Left-exact Mittag-Leffler  functors of modules}

\author{Adri\'an Gordillo-Merino, Jos\'e Navarro and Pedro Sancho}

\address{Departamento de Matem\'{a}ticas, Universidad de Extremadura, Avda. de Elvas s/n, 06006 Badajoz. Spain}
\email{adgormer@unex.es, navarrogarmendia@unex.es, sancho@unex.es}
\thanks{All authors have been partially supported by Junta de Extremadura and FEDER funds.}

\subjclass[2010]{Primary 16D40; Secondary 18A99.}

\keywords{Flat modules, Mittag-Leffler modules, functors of modules.
}

\begin{abstract} 
Let $R$ be an associative ring with unit. This paper deals with various aspects of the category of functors of $\mathcal R$-modules; that is, the category of additive and covariant functors from the category of R-modules to the category of abelian groups. We give several characterizations of  left-exact Mittag-Leffler functors of $\mathcal R$-modules. 
\end{abstract}

\maketitle

\section{Introduction}

Various types of modules are defined or determined via certain functors associated with them: flat modules,  projective modules, injective modules, etc. 
The aim of this paper is to study the functors of modules associated with flat Mittag-Leffler modules, much in the spirit as in the theory developed in \cite{Amelia} and \cite{Mitchell} (for small categories).


\medskip

In a more precise manner, if $R\,$ is an associative ring with unit, we will say that $\mathbb{M}\,$ is an $\mathcal{R}$-\textit{module} (right $\mathcal{R}$-module) if $\mathbb{M}\,$ is a covariant additive functor from the category of $R$-modules (respectively, right $R$-modules) to the category of abelian groups.

Any right $R$-module $M\,$ produces an $\mathcal{R}$-module. Namely, the \textit{quasi-coherent $\RR$-module} $\mathcal M$ associated with a right $R$-module $M$ is defined by
$$\mathcal M(N)=M\otimes_R N,$$
for any $R$-module $N$. It is significant to note that the category of (right) $R$-modules
is equivalent to the category of quasi-coherent $\RR$-modules (Cor. \ref{2.4}).

On the other hand, given an $\mathcal R$-module $\mathbb M$,  $\mathbb M^*$ is the  right $\mathcal R$-module defined as follows:
$$\mathbb M^*(N):=\Hom_{\RR}(\mathbb M,\mathcal N),$$
for any right $R$-module $N$. 

If we consider an $R$-module $M$, then we will call $\mathcal M^*$ the module scheme associated with $M$---by analogy with the geometric framework of functors defined on algebras (\cite{Amelia}). Module schemes are projective $\R$-modules and left-exact functors and a module scheme $\mathcal M^*$ is quasi-coherent if and only if $M$ is a finitely generated projective $R$-module (Prop. \ref{schpro} and \ref{3.11}, respectively).

A relevant fact is that quasi-coherent modules and module schemes are reflexive, that is, the canonical morphism of $\RR$-modules $\mathcal M\to \mathcal M^{**}$ is an isomorphism (Thm. \ref{reflex}; see also \cite{Amelia} for a slightly different version of this statement).

The category of $ \R $-modules is not a locally small category.  Then, it is interesting to consider the following full subcategory.

\begin{definition} \label{DModSch} Let $\langle\text{\sl ModSch}\rangle$ be the full subcategory of the category of $\R$-modules whose objects are those  $\R$-modules $\mathbb M$ for which there exists an exact sequence of $\R$-module morphisms
$$\oplus_{i\in I} \mathcal N_i^*\to \oplus_{j\in J} \mathcal N_j^*\to \mathbb M\to 0.$$

\end{definition}

$\langle\text{\sl ModSch}\rangle$ is
a bicomplete, locally small and  abelian category (Thm \ref{T6.11}). Besides, $\langle\text{\sl ModSch}\rangle$ is the smallest full subcategory of the category of $\R$-modules containing the $\RR$-module $\RR$ that is stable by limits, colimits and isomorphisms (that is, if an $\RR$-module is isomorphic to an object of the subcategory then it belongs to the subcategory).

From now on, we will assume that there exists an epimorphism $\oplus_{j\in J}\mathcal N_j^*\to \mathbb M$ for the $\R$-modules  $\mathbb M$ considered. If $J$ is countable, we will say that
$\mathbb M$ is countably generated.

 We prove that 
$\mathbb M$ is a left-exact $\R$-module if and only if $\mathbb M$ is a direct limit of module schemes (Thm. \ref{Lmodule}). This is is a version of Grothendieck's representability theorem (\cite[A. Prop. 3.1]{Grot3}) and its proof follows from standard categorical arguments. 
  Likewise we prove that   $\mathbb M$ is an exact functor if and only if $\mathbb M$ is a direct limit of module schemes $\mathcal F_i^*$, where $F_i$ are free modules (Thm. \ref{SLazard}).
In particular, if $\mathbb M=\mathcal M$ is quasi-coherent, 
Lazard's Theorem follows: any flat $R$-module is a direct limit of free $R$-modules.

We then focus on the question of determining those $\R$-modules $\mathbb M$ that can be expressed as a union of module schemes. We see that this question is closely related to $\mathbb M$ being a left-exact Mittag-Leffler module.


\begin{definition} \label{DML} An $\R$-module $\mathbb M$ is an ML module if the natural morphism $$\mathbb M(\prod_{i\in I}N_i)\to 
\prod_{i\in I} \mathbb M(N_i),$$
is injective, for any set of $R$-modules $\{N_i\}$.
\end{definition}

An $R$-module $M$ is a flat Mittag-Leffler module if and only if $\mathcal M$ is a left-exact ML $\R$-module (see \cite[II 2.1.5]{Raynaud})). Inspired by previous work of Grothendieck (\cite{Grot}), the class of Mittag-Leffler modules was first introduced by Gruson and Raynaud (\cite{RaynaudG}, \cite{Raynaud}) in their study of flat and projective modules. Soon after that, Mittag-Leffler modules were studied in relation with different functorial properties: flat strict Mittag-Leffler modules are ``universally torsionless modules'' (\cite{Garfinkel}),  ``trace modules'' (\cite{Ohm}) or  ``locally projective modules'' (in the sense of \cite{Zimmermann-Huisgen}).  More recently, there is a renewed interest in these modules, as they have been proposed as a generalized notion of vector bundle (\cite{Drinfeld}) and also have appeared to play a role in several different problems of Algebra (\cite{Herbera}, \cite{Saroch}).  
 
\begin{theorem-non}
Let $\mathbb M$ be an $\R$-module. 
$\mathbb M$ is a left-exact ML $\R$-module if and only if  it  is a direct limit of submodule schemes. If $\mathbb M$ is countably generated, $\mathbb M$ is a left-exact ML $\R$-module if and only if
it is projective. 
\end{theorem-non}

In particular,  if $M$ is a countably generated $R$-module, $M$ is a flat Mittag-Leffler  module  if and only if there exists a sequence of 
submodule schemes of $\mathcal M$, 
$$ \mathcal N_0^*\subseteq \mathcal N_1^*\subseteq\cdots\subseteq \mathcal N^*_i\subseteq\cdots\subseteq \mathcal M,$$
such that $\mathcal M=\cup_{i\in\mathbb N}\mathcal N_i^*$, and $M$ is a flat Mittag-Leffler  module if and only if it is  projective (\cite[2.2.2]{RaynaudG}).

Kaplansky's theorem about projective $R$-modules can be generalized to projective $\R$-modules:

\begin{theorem-non} Every  projective $\R$-module is a direct sum of projective countably generated $\R$-modules.
\end{theorem-non}

Finally, we study SML $\R$-modules.

\begin{definition} \label{DSM} An  $\R$-module $\mathbb M$ is an SML  $\R$-module if the natural morphism $$\mathbb M(N)\longrightarrow \Hom_R(\mathbb M^*(R),N),\,  m\mapsto \tilde m, \text{ where } \tilde m(w)=w_N(m)$$ is injective, for any $\,R$-module $\,N$. 
\end{definition}

$M$ is a flat strict Mittag-Leffler module if and only if $\mathcal M$
is an SML module (see \cite{Garfinkel}, Thm. 3.2). An $\R$-module $\mathbb M$ is an SML module iff there exists a monomorphism $$\mathbb M\hookrightarrow \prod_{I}\mathcal R\,\,\text{ 
(Prop \ref{caca}).}$$
In particular, $M$ is a flat strict Mittag-Leffler module if and only if there exists a monomorphism $\mathcal M \hookrightarrow \prod_{I}\mathcal R$.

\begin{theorem-non} An $\R$-module $\mathbb M$ is a left-exact SML $\R$-module if and only if any of the following, equivalent  conditions are met:

\begin{enumerate}

\item $\,\mathbb M\,$ is a direct limit of submodule schemes, $\,\mathcal N_i^*\subseteq\mathbb M\,$ and the dual morphism
$\,\mathbb M^*\to \mathcal N_i\,$ is an epimorphism, for any $\,i$.

\item If $\mathbb M$ is reflexive, every $\R$-module morphism $f\colon \mathbb M^*\to \mathcal N$ factors through the quasi-coherent module associated with $\Ima f_R$.
\end{enumerate}

\end{theorem-non}

Now, assume  $\mathbb M=\mathcal M$ is quassi-coherent.  We can specify more in (1): Let $\{M_i\}_{i\in I}$ be the set of all finitely generated submodules of $M$, and $M'_i:=\Ima[M^*\to M_i^*]$. 
$M$ is a flat strict Mittag-Leffler module if and only if $\mathcal M=\ilim{i} \mathcal {M'}_i^*$, that is to say, 
the natural morphism
$$N\otimes_R M\to \lim \limits_{\rightarrow} \Hom_{R}(M_i',N)$$
is an isomorphism, for any right $R$-module $N$ (Cor. \ref{C5.9}). Besides, if $R$ is a local ring  we prove that $M$ is a flat strict Mittag-Leffler module  if and only if  it is equal to the direct limit of its  finite free direct summands. (2) means that $M$ is a locally projective $R$-module (Prop \ref{PAzumaya}) and it
is equivalent to say that the cokernel of any morphism $f\colon \mathcal M^*\to \mathcal R$ is quasi-coherent,
that means that $M$ is a trace module (Prop. \ref{trace2}).

In Algebraic Geometry it is usual to consider the category of covariant additive functors from the category of $R$-algebras to the category of abelian groups, whereas in this work we consider the category of functors defined on the category of $R$-modules. There is an adjunction between these categories (arXiv:1811.11487) and many of the results presented in this paper are also true for functors defined on the category of $R$-algebras, although the direct proof is usually more difficult.
Finally, let us mention that a similar functorial study of (non-left-exact) ML modules has been carried out in \cite{Pedro3}. An effort has been made to make this paper as self-contained as possible.


\section{The category of $\mathcal{R}$-modules}

Let $\,R\,$ be an  associative ring with unit.

\begin{definition}
A (left)  $\,\mathcal R$-{\sl module} is a covariant, additive functor $\,\mathbb M\,$ from the category of (left) $\,R$-modules to the category of abelian groups.

A {\sl morphism of $\,\mathcal R$-modules} $f\colon \mathbb M\to \mathbb M'$
 is a morphism of functors  such that the  morphisms $\,f_{N}\colon \mathbb M({N})\to
 \mathbb M'({N})\,$ are morphisms of groups, for any  $\,R$-module $\,{N}$.
\end{definition}

Throughout this paper, many definitions or statements are given with one module structure (left or right);  we leave to the reader the task of producing the corresponding definitions or statements by interchanging the left and right structures.

\medskip
Observe that, if $\,\mathbb M\,$ is a left $\,\RR$-module, then $\,\mathbb M(R)\,$ is naturally a right $\,R$-module: for any $\,r\in R$, consider the morphism of $\,R$-modules
$\,\cdot r\colon R\to R$, $r'\mapsto r'\cdot r$, and define
$$m\cdot r:=\mathbb M(\cdot r)(m), \text{ for any } m\in \mathbb M(R)\, .$$
If $\,f\colon \mathbb M\to \mathbb M'\,$ is a morphism of $\,\RR$-modules, then
$\,f_R\colon \mathbb M(R)\to \mathbb M'(R)\,$ is a morphism of right $R$-modules.

\medskip
Let us write $\,\Hom_{\mathcal R}(\mathbb M,\mathbb M')\,$ to denote the  family of morphisms of $\,\mathcal R$-modules from $\,\mathbb M\,$ to $\,\mathbb M'$.

\begin{definition}
The {\sl dual of an $\,\mathcal{R}$-module} $\,\mathbb{M}\,$ is the right $\,\mathcal R$-module $\,\mathbb M^*\,$  defined, for any right $\,R$-module $\,N$, as follows:
$$\mathbb M^*(N):=\Hom_{\RR}(\mathbb M,\mathcal N) \, .$$
\end{definition}

The $\RR$-modules $\mathbb M$ considered in this paper verify that $\mathbb M^*(N)$ is a set. It can be proved that $\mathbb M^*(N)$ is a set for any $\RR$-module $\mathbb M$ (see \cite[7.4]{Pedro3}).

\medskip
Kernels, cokernels and images of morphisms of $\,\mathcal R$-modules will always be regarded in the category of $\,\mathcal R$-modules, and it holds:
$$(\Ker f)({N})=\Ker f_{N},\ (\Coker f)({N})=\Coker f_{N},\ (\Ima f)({N})=\Ima f_{N} \ . $$

Besides, for any  upward directed system $\,\{\mathbb{M}_i\}_{i\in I}\,$ and any  downward directed system $\,\{\mathbb{M}_j\}_{j\in J}$,
$$ (\ilim{} \mathbb M_i)({N})=\ilim{} (\mathbb M_i({N}))\, , \quad
(\plim{} \mathbb M_j)({N})=\plim{} (\mathbb M_j({N})) \, .$$

\subsection{Quasi-coherent modules}

\begin{definition} The {\sl quasi-coherent} (left) $\,\mathcal{R}$-module $\,{\mathcal M}\,$ associated with a right $\,R$-module $\,M\,$ is defined by $${\mathcal M}({N}) := M \otimes_R {N} \, . $$
\end{definition}

Any morphism of (right) $R$-modules $f_R\colon M\to M'$ induces the morphism of $\mathcal R$-modules 
$f\colon \mathcal M\to \mathcal M'$ defined by $f_N(m\otimes n):=f_R(m)\otimes n$, for any
$R$-module $N$, $m\in M$ and $n\in N$.

\begin{definition} Let $\,\mathbb M\,$ be an $\,\mathcal R$-module. The {\sl quasi-coherent module associated with}
the right $\,R$-module $\,\mathbb M(R)\,$ will be denoted by $\mathbb M_{qc}$ $$\mathbb M_{qc}({N}):=\mathbb M(R)\otimes_R N \, .$$
\end{definition}

If $\,N\,$ is an $\,R$-module, for any elements $\,m \in \mathbb{M}(R)$ and $\,n\in N$, let us define $\,m \cdot n \in M(N)\,$ in the following way: 
$$m\cdot n := M(\cdot n)(m)\, , $$ where $\,\mathbb M(\cdot n)\colon
\mathbb M(R)\to \mathbb M(N)$ is the morphism of groups induced by the morphism of $\,R$-modules $\,\cdot n \colon R\to  N$, $\,r\mapsto r\cdot n$. 

There exists a natural morphism $\,\mathbb{M}_{qc} \to \mathbb{M}\,$ defined by:
$$\mathbb M_{qc}({N})= \mathbb M(R)\otimes_R N \longrightarrow \mathbb{M}(N) \, , \quad m\otimes n\mapsto m\cdot n \ . $$
for any $R$-module $N$.

\begin{proposition} \label{tercer}
Let  $\,\mathbb M\,$ be an $\,{\mathcal R}$-module and let $\,M\,$ be  a right $\,R$-module. The assignment $\,f\mapsto f_R\,$ establishes a bijection:
$${\rm Hom}_{\mathcal R} ({\mathcal M}, \mathbb M) = {\rm Hom}_R (M, \mathbb M(R))\, .$$
\end{proposition}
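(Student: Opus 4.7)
The plan is a Yoneda-type argument: exhibit an explicit two-sided inverse of the map $f\mapsto f_R$. Sending $f\colon \mathcal M\to \mathbb M$ to its component $f_R\colon M=\mathcal M(R)\to \mathbb M(R)$ is well-defined in $\Hom_R(M,\mathbb M(R))$ by the observation recorded just after the definition of the right $R$-module structure on $\mathbb M(R)$. For the inverse, given $\phi\colon M\to \mathbb M(R)$, I would define $f_N\colon M\otimes_R N\to \mathbb M(N)$ by the rule $m\otimes n\mapsto \phi(m)\cdot n := \mathbb M(\cdot n)(\phi(m))$, using the action notation already in play in the excerpt; this is a direct generalization of the canonical map $\mathbb M_{qc}\to \mathbb M$ constructed immediately before the statement.

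The verification consists of three routine checks. First, that $f_N$ is a well-defined group morphism: additivity in $m$ comes from additivity of $\phi$; additivity in $n$ from the additivity of $\mathbb M$ applied to the identity $(\cdot n)+(\cdot n')=\cdot(n+n')$; and the $R$-balancing $\phi(mr)\cdot n = \phi(m)\cdot (rn)$ follows from the functorial identity $\mathbb M(\cdot n)\circ \mathbb M(\cdot r) = \mathbb M(\cdot (rn))$ together with the hypothesis $\phi(mr)=\phi(m)\cdot r$. Second, naturality in $N$: for an $R$-linear $g\colon N\to N'$, the identity $g\circ (\cdot n)=\cdot g(n)$ and application of $\mathbb M$ yield the commutative square $\mathbb M(g)\circ f_N = f_{N'}\circ \mathcal M(g)$. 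Third, mutual inversion: on the one hand $f_R(m\otimes 1)=\mathbb M(\Id_R)(\phi(m))=\phi(m)$, recovering $\phi$; on the other, the naturality of a given $f\colon \mathcal M\to \mathbb M$ against $\cdot n\colon R\to N$ forces $f_N(m\otimes n)=\mathbb M(\cdot n)(f_R(m))$, so the constructed map reproduces $f$.

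The only substantive point is the $R$-balancing in the first check, since it is the unique step in which the hypothesis that $\phi$ respects the right $R$-module structure on $\mathbb M(R)$---as defined in the paragraphs preceding the statement---is genuinely used; everything else is bookkeeping about the additivity and functoriality of $\mathbb M$.
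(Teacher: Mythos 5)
Your proposal is correct and takes essentially the same route as the paper: your explicit formula $f_N(m\otimes n)=\mathbb M(\cdot n)(\phi(m))$ is precisely the composite $\mathcal M\to \mathbb M_{qc}\to \mathbb M$ that the paper uses to build the inverse, and your argument that any $f$ is recovered from $f_R$ via naturality against $\cdot n\colon R\to N$ is the paper's injectivity argument verbatim. You merely spell out the well-definedness, balancing and naturality checks that the paper leaves implicit.
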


\begin{proof} Given a morphim of $R$-modules $M\to  \mathbb M(R)$, consider the induced morphism $\mathcal M\to \mathbb M_{qc}$ and the composition of the  morphisms $\mathcal M\to \mathbb M_{qc}\to \mathbb M$.

Any morphism of $\,\mathcal R$-modules $\,f\colon \mathcal M\to \mathbb M\,$ is determined by $\,f_R$: if $\,N\,$ is an $\,R$-module, let us consider $\,n\in N\,$ and the morphism of $\,R$-modules $\,\cdot n \colon R\to N,$ $r\mapsto r\cdot n$. The  commutativity of the following diagrams imply $\,f_N(m\otimes n)=\mathbb M(\cdot n)(f_R(m))$, for any $\,m\in M$,
$$\xymatrix @C=8pt {M=\mathcal M(R) \ar[r]^-{f_R} \ar[d]^-{\mathcal M(\cdot n)} & \mathbb M(R) \ar[d]^-{\mathbb M(\cdot s)}\\ M\otimes_R N=\mathcal M(N) \ar[r]^-{f_N} & \mathbb M(N)}
\quad \xymatrix @C=8pt { m \ar@{|->}[r]^-{f_R} \ar@{|->}[d]^-{\mathcal M(\cdot n)} & \qquad f_R(m) \ar@{|->}[d]^-{\mathbb M(\cdot n)}\\ m\otimes n \ar@{|->}[r]^-{f_N} & f_N(m\otimes n)=\mathbb M(\cdot n)(f_R(m))
}$$

\end{proof}

\begin{corollary} \label{tercerb} If $\,\mathbb M\,$ is an $\,\mathcal R$-module, there exists a  a functorial equality, for any quasi-coherent $\,\mathcal R$-module $\,\mathcal N$,
$$\Hom_{\mathcal R}(\mathcal N,\mathbb M) = \Hom_{\mathcal R}(\mathcal N,\mathbb M_{qc}) \, .$$
\end{corollary}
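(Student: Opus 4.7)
The plan is to apply Proposition \ref{tercer} twice. On one hand, taking the $\mathcal R$-module to be $\mathbb M$ itself yields a bijection
$$\Hom_{\mathcal R}(\mathcal N,\mathbb M) \;=\; \Hom_R(N,\mathbb M(R)),\qquad f\mapsto f_R.$$
On the other hand, $\mathbb M_{qc}$ is an $\mathcal R$-module in its own right, so the same proposition gives
$$\Hom_{\mathcal R}(\mathcal N,\mathbb M_{qc}) \;=\; \Hom_R(N,\mathbb M_{qc}(R)).$$
The final ingredient is the canonical identification $\mathbb M_{qc}(R)=\mathbb M(R)\otimes_R R=\mathbb M(R)$, which makes the right-hand sides of the two bijections literally the same set. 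This immediately produces the claimed equality.

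To turn the bare bijection into the equality stated in the corollary, I would describe the map explicitly: the natural morphism $\mathbb M_{qc}\to \mathbb M$ defined just before Proposition \ref{tercer} induces, by composition, a map
$$\Hom_{\mathcal R}(\mathcal N,\mathbb M_{qc})\longrightarrow \Hom_{\mathcal R}(\mathcal N,\mathbb M),$$
and my claim is that this map is precisely the identification coming from the two applications of Proposition \ref{tercer}. To check this it suffices to observe that $(\mathbb M_{qc}\to \mathbb M)_R$ is the identity on $\mathbb M(R)$, so taking the $R$-component commutes with composition along $\mathbb M_{qc}\to \mathbb M$. Hence a morphism $f\colon\mathcal N\to\mathbb M_{qc}$ and its composite $\mathcal N\to\mathbb M_{qc}\to\mathbb M$ have the same $R$-component, and Proposition \ref{tercer} (being an injection with inverse $f\mapsto f_R$) guarantees the composition map is bijective.

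Functoriality in $\mathcal N$ (equivalently, in the underlying right $R$-module $N$) is automatic, since every step in the above identification is manifestly functorial in $N$: the bijections of Proposition \ref{tercer} are functorial, and the identifications $\mathbb M_{qc}(R)\otimes_R R=\mathbb M_{qc}(R)=\mathbb M(R)$ involve only canonical maps.

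I do not foresee any real obstacle; the only point to be careful about is matching the abstract bijection of Proposition \ref{tercer} with the concrete map induced by $\mathbb M_{qc}\to\mathbb M$, but both are characterized by the same $R$-component, so they coincide.
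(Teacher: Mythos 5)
Your proof is correct and is essentially the paper's own argument: the paper also applies Proposition \ref{tercer} twice, writing $\Hom_{\mathcal R}(\mathcal N,\mathbb M)=\Hom_R(N,\mathbb M(R))=\Hom_{\mathcal R}(\mathcal N,\mathbb M_{qc})$ and using $\mathbb M_{qc}(R)=\mathbb M(R)$ implicitly. Your extra verification that the identification is realized by composition with the canonical morphism $\mathbb M_{qc}\to\mathbb M$ is a welcome (and correct) refinement the paper leaves unstated.
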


\begin{proof} 
The last equality follows from a repeated use of Proposition \ref{tercer}: 
$$\Hom_{\mathcal R}(\mathcal N,\mathbb M)=\Hom_R(N,\mathbb M(R))=\Hom_{\mathcal R}(\mathcal N,\mathbb M_{qc}) \ . $$
\end{proof}


\begin{corollary}\label{2.4}   The functors $\,\mathcal M \rightsquigarrow \mathcal M(R) \,$ and $\,M \rightsquigarrow \mathcal M\,$ establish an equivalence of categories
$$\text{ Category of quasi-coherent $\mathcal R$-modules }\,  \equiv \text{Category of right $R$-modules } \, . $$

In particular,
$${\rm Hom}_{\mathcal R} ({\mathcal M},{\mathcal M'}) = {\rm Hom}_R (M,M')	\, .$$
\end{corollary}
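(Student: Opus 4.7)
The plan is to deduce both claims directly from Proposition \ref{tercer}, which already does the heavy lifting. The only real work is to chase natural isomorphisms on both sides of the alleged equivalence.

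First I would verify the Hom identity. Given quasi-coherent $\mathcal R$-modules $\mathcal M$ and $\mathcal M'$ coming from right $R$-modules $M$ and $M'$, I apply Proposition \ref{tercer} with the target $\mathcal R$-module being $\mathbb M = \mathcal M'$. The proposition gives
$$\Hom_{\mathcal R}(\mathcal M, \mathcal M') \;=\; \Hom_R(M, \mathcal M'(R)) \;=\; \Hom_R(M, M' \otimes_R R) \;=\; \Hom_R(M, M'),$$
where the last step uses the canonical isomorphism $M' \otimes_R R \simeq M'$. By construction, the bijection sends $f$ to its evaluation $f_R$ at $R$, so this simultaneously proves that the functor $\mathcal M \rightsquigarrow \mathcal M(R)$ is fully faithful.

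Next I would check the two unit/counit naturality conditions for the equivalence. Starting from a right $R$-module $M$ and forming its quasi-coherent $\mathcal R$-module $\mathcal M$, evaluation at $R$ gives $\mathcal M(R) = M \otimes_R R \simeq M$, and this isomorphism is natural in $M$; hence the composition $M \rightsquigarrow \mathcal M \rightsquigarrow \mathcal M(R)$ is naturally isomorphic to the identity. In the other direction, given a quasi-coherent module $\mathcal M$ associated with $M$, evaluation at $R$ recovers $M$ (again up to the canonical isomorphism $M\otimes_R R \simeq M$), and the quasi-coherent module associated with $M = \mathcal M(R)$ is, by definition, $\mathcal M$ itself; naturality here is immediate from the functoriality of $\otimes_R$.

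Combined with full faithfulness from the Hom identity, this establishes the equivalence of categories. There is no real obstacle: the single non-trivial ingredient is Proposition \ref{tercer}, and the corollary is essentially a bookkeeping exercise unpacking the definition $\mathcal M(R) = M\otimes_R R = M$.
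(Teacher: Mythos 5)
Your proof is correct and follows exactly the route the paper intends: the corollary is stated without proof precisely because it is the immediate consequence of Proposition \ref{tercer} (applied with $\mathbb M = \mathcal M'$) together with the canonical isomorphism $M'\otimes_R R\simeq M'$, which is what you carry out. The unit/counit check you add is routine and correct, so nothing is missing.
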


If $\,f\colon \mathcal M\to \mathcal N\,$ is a morphism of $\,\RR$-modules, then $\,\Coker f\,$ is the quasi-coherent module associated with $\,\Coker f_R$.

\subsection{Module schemes}

\begin{definition} The {\sl $\mathcal R$-module scheme} associated with an $\,R$-module $\,M\,$ is the $\,\mathcal{R}$-module  $\,\mathcal M^*$. \end{definition}

Observe that the module scheme $\,\mathcal{M}^*\,$ is precisely the functor of points of the $\,R$-module $\,M$: for any $\,R$-module $\,N$, in virtue of Corollary \ref{2.4},
$$\mathcal{M}^* (N) = \Hom_{\mathcal{R}}(\mathcal{M} , \mathcal{N}) = \Hom_R (M , N) \, .  $$

\begin{proposition} \label{schpro} Module schemes   $\mathcal N^*$ are projective $\R$-modules and left-exact functors.
\end{proposition}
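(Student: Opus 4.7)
The plan is to establish a Yoneda-type representability identity for module schemes and then deduce both assertions as formal consequences.

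First I would prove the key identity
$$\Hom_{\mathcal R}(\mathcal N^*,\mathbb M)=\mathbb M(N)$$
for every $\R$-module $\mathbb M$ and every $R$-module $N$. The map from left to right sends $f\colon \mathcal N^*\to\mathbb M$ to $f_N(\Id_N)\in\mathbb M(N)$, using that $\mathcal N^*(N)=\Hom_R(N,N)$. The inverse sends $m\in\mathbb M(N)$ to the natural transformation whose component at an $R$-module $P$ is $\mathcal N^*(P)=\Hom_R(N,P)\to\mathbb M(P),\ \varphi\mapsto\mathbb M(\varphi)(m)$. The verification that these are mutually inverse and that everything is additive is the usual enriched Yoneda argument; naturality in $P$ and additivity of $\mathbb M$ make it automatic. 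This is essentially the additive Yoneda lemma applied to the representable functor $\mathcal N^*=\Hom_R(N,-)$.

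With this identity in hand, left-exactness is immediate: if $0\to\mathbb M'\to\mathbb M\to\mathbb M''$ is an exact sequence of $\mathcal R$-modules, then for every $R$-module $P$ the sequence $0\to\mathbb M'(P)\to\mathbb M(P)\to\mathbb M''(P)$ is exact (kernels and images of morphisms of $\mathcal R$-modules are computed pointwise, as recalled in the excerpt). Taking $\mathbb M=\mathcal P$ quasi-coherent, this reduces to the well-known left-exactness of $\Hom_R(N,-)$ when we evaluate $\mathcal N^*$ on a short exact sequence of quasi-coherent modules; more directly, $\mathcal N^*(P)=\Hom_R(N,P)$ as a functor in $P$ is the representable functor $\Hom_R(N,-)$, which is left-exact on $R$-modules by a classical computation.

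For projectivity, let $\pi\colon\mathbb M\twoheadrightarrow\mathbb M''$ be an epimorphism of $\mathcal R$-modules. Since cokernels are computed pointwise, an epimorphism in the category of $\mathcal R$-modules is precisely a natural transformation that is surjective on every $R$-module $P$; in particular $\pi_N\colon\mathbb M(N)\to\mathbb M''(N)$ is surjective. Applying the representability identity of the first paragraph twice, the induced map
$$\Hom_{\mathcal R}(\mathcal N^*,\mathbb M)=\mathbb M(N)\ \xrightarrow{\pi_N}\ \mathbb M''(N)=\Hom_{\mathcal R}(\mathcal N^*,\mathbb M'')$$
is surjective, which is exactly the projectivity of $\mathcal N^*$.

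The only delicate point is really the first step: one must check that the Yoneda identification respects the additive structure so that natural transformations of functors to abelian groups automatically correspond to elements of $\mathbb M(N)$ without any extra hypothesis. This follows from the fact that $\Id_N\in\mathcal N^*(N)$ generates $\mathcal N^*$ in the appropriate sense (every $\varphi\in\mathcal N^*(P)$ equals $\mathcal N^*(\varphi)(\Id_N)$), so naturality alone forces the natural transformation to be determined by its value at $\Id_N$, and additivity of $\mathbb M$ in each argument takes care of the abelian-group compatibility.
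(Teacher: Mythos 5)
Your proof is correct and follows essentially the same route as the paper: the additive Yoneda identity $\Hom_{\mathcal R}(\mathcal N^*,\mathbb M)=\mathbb M(N)$, combined with the fact that kernels and cokernels of $\mathcal R$-module morphisms are computed pointwise, yields projectivity, while left-exactness is that of the representable functor $\Hom_R(N,-)$. The paper simply cites Yoneda's Lemma where you spell out the verification.
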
 

\begin{proof} By Yoneda's Lemma, 
$\Hom_{\mathcal R}(\mathcal N^*,\mathbb M)=\mathbb M(N),$
for any $\R$-module $\mathbb M$.
Therefore, the functor  $\Hom_{\mathcal R}(\mathcal N^*,-)$ is exact. By Corollary \ref{2.4}, $\mathcal N^*=\Hom_R(N,-)$, that is a left-exact functor.

\end{proof}

\begin{proposition}\label{L5.11} Let $\,\{\mathbb M_i\}_{i\in I}\,$ be a directed system of $\,\mathcal R$-modules. Then, for any $\,R$-module $\,N$,
$$\Hom_{\mathcal R}(\mathcal N^*, \lim_{\rightarrow} \mathbb M_i)=\lim_{\rightarrow} \Hom_{\mathcal R}(\mathcal N^*, \mathbb M_i)\, .$$
\end{proposition}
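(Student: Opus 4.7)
The statement is essentially a two-line application of Yoneda's lemma together with the pointwise nature of direct limits in the category of $\mathcal{R}$-modules. My plan is the following.

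First, I would invoke the identity established in the proof of Proposition \ref{schpro}, namely that for any $\mathcal{R}$-module $\mathbb{M}$ one has $\Hom_{\mathcal{R}}(\mathcal{N}^*,\mathbb{M})=\mathbb{M}(N)$, by Yoneda's Lemma applied to the representable functor $\mathcal{N}^*$. Apply this with $\mathbb{M}=\ilim{i}\mathbb{M}_i$ to rewrite the left-hand side as $(\ilim{i}\mathbb{M}_i)(N)$, and with $\mathbb{M}=\mathbb{M}_i$ to rewrite each term of the right-hand side as $\mathbb{M}_i(N)$.

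Second, I would appeal to the basic fact recorded earlier in the section, that colimits of $\mathcal{R}$-modules are computed pointwise: $(\ilim{i}\mathbb{M}_i)(N)=\ilim{i}\mathbb{M}_i(N)$. Chaining these identifications yields
$$\Hom_{\mathcal R}(\mathcal N^*,\ilim{i}\mathbb M_i)=(\ilim{i}\mathbb M_i)(N)=\ilim{i}\mathbb M_i(N)=\ilim{i}\Hom_{\mathcal R}(\mathcal N^*,\mathbb M_i),$$
and I would check that the composite coincides with the natural map, which follows from the naturality of the Yoneda isomorphism in $\mathbb{M}$.

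There is no real obstacle here: the only thing to be slightly careful about is that the Yoneda identification $\Hom_{\mathcal R}(\mathcal N^*,\mathbb M)=\mathbb M(N)$ is functorial in $\mathbb M$, so that the transition maps in the directed system $\{\Hom_{\mathcal R}(\mathcal N^*,\mathbb M_i)\}$ correspond to the transition maps $\mathbb M_i(N)\to \mathbb M_j(N)$, and consequently the resulting isomorphism is the canonical comparison morphism. In effect, the proposition says that representable functors (module schemes) are compact objects in the category of $\mathcal{R}$-modules, which is the expected analogue, in this setting, of the classical fact that finitely presented modules commute with filtered colimits in $\Hom$.
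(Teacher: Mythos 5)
Your proposal is correct and is essentially identical to the paper's own proof, which chains the Yoneda identification $\Hom_{\mathcal R}(\mathcal N^*,\mathbb M)=\mathbb M(N)$ with the pointwise computation of direct limits. The extra remark on naturality of the Yoneda isomorphism is a welcome but minor addition.
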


\begin{proof} It is a consequence of Yoneda's Lemma,
$$\Hom_{\mathcal R}(\mathcal N^*,\lim_{\rightarrow}\mathbb M_i)=(\lim_{\rightarrow} \mathbb M_i)(N)= \lim_{\rightarrow} (\mathbb M_i(N))=
\lim_{\rightarrow} \Hom_{\mathcal R}(\mathcal N^*, \mathbb M_i) \, .$$
\end{proof}


\begin{proposition} \label{trivial} Let $\,\mathbb M\,$ be a $\,\RR$-module and $\,\mathbb N\,$ be a right $\,\mathcal R$-module. There exists an isomorphism of groups
$$\Hom_{\mathcal R}(\mathbb M,\mathbb N^*)=\Hom_{\mathcal R}(\mathbb N,\mathbb M^*)\, .$$
\end{proposition}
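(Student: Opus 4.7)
The plan is to build explicit mutually inverse maps between the two $\Hom$-groups, relying on the elementary identification $\mathcal{P}(Q)=Q\otimes_R P=\mathcal{Q}(P)$ for any left $R$-module $P$ and right $R$-module $Q$. Given a morphism $f\colon \mathbb{M}\to \mathbb{N}^*$, unpacking the definition of $\mathbb{N}^*$ gives, for each pair $(P,Q)$ and each $m\in \mathbb{M}(P)$, $n\in \mathbb{N}(Q)$, an element $f_P(m)_Q(n)\in \mathcal{P}(Q)=Q\otimes_R P$. I define a candidate $g\colon \mathbb{N}\to \mathbb{M}^*$ by
$$g_Q(n)_P(m)\;:=\;f_P(m)_Q(n)\;\in\;\mathcal{Q}(P)=Q\otimes_R P,$$
and construct the reverse direction symmetrically.

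The main step is to verify that $g$ is actually a morphism of $\mathcal{R}$-modules. First, I would check that for fixed $Q$ and $n\in \mathbb{N}(Q)$ the recipe $m\mapsto g_Q(n)_P(m)$ defines a natural transformation $g_Q(n)\colon \mathbb{M}\to \mathcal{Q}$: naturality in $P$ amounts to showing, for $\alpha\colon P\to P'$, that $g_Q(n)_{P'}\circ \mathbb{M}(\alpha)=\mathcal{Q}(\alpha)\circ g_Q(n)_P$, which follows from the naturality of $f$ in $P$ after observing that the morphism $\mathcal{P}\to \mathcal{P}'$ induced by $\alpha$ evaluates on $Q$ as $\mathrm{id}_Q\otimes \alpha=\mathcal{Q}(\alpha)$. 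Second, I would check that $Q\mapsto g_Q$ is natural, i.e., for $\beta\colon Q\to Q'$, that $g_{Q'}(\mathbb{N}(\beta)(n))=\mathbb{M}^*(\beta)\circ g_Q(n)$; this reduces to the fact that each $f_P(m)\colon \mathbb{N}\to \mathcal{P}$ is itself a natural transformation, since $\mathcal{P}(\beta)=\beta\otimes \mathrm{id}_P$ matches the action of $\mathbb{M}^*(\beta)$ after the identification of $\mathcal{P}(Q)$ with $\mathcal{Q}(P)$.

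Additivity of the assignment $f\mapsto g$, as well as of the inverse, is immediate from pointwise additivity of the morphisms involved. Because the construction is manifestly symmetric under the exchange $(\mathbb{M},P)\leftrightarrow (\mathbb{N},Q)$, starting from any $g\colon \mathbb{N}\to \mathbb{M}^*$ and applying the same recipe produces an $f\colon \mathbb{M}\to \mathbb{N}^*$ inverse to the original; the two assignments are then mutually inverse bijections. The only mildly delicate point is the careful bookkeeping of left/right variances and the consistent use of the identification $\mathcal{P}(Q)\cong \mathcal{Q}(P)$; beyond that, no substantial obstacle arises.
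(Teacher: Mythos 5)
Your construction is exactly the one the paper uses: define $\tilde f_{S'}(n)_S(m):=f_S(m)_{S'}(n)$ via the identification $\mathcal P(Q)=Q\otimes_R P=\mathcal Q(P)$, and observe the assignment is symmetric, hence involutive. Your write-up just spells out the naturality checks in both variables in more detail; the approach is the same.
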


\begin{proof} Any morphism of $\,\mathcal{R}$-modules $\,f\colon \mathbb{M} \to \mathbb{N}^*\,$ defines a morphism $\,\tilde f \colon \mathbb{N}\to \mathbb{M}^*\,$ as follows: $\,(\tilde f_{S'}(n))_S(m):=(f_S(m))_{S'}(n)$, for any  $\,n\in\mathbb N(S')$,  $\,m\in\mathbb M(S)\,$ and $R$-modules $S',S$.
This assignment is an isomorphism because, for any $\,R$-module $\,S\,$ and any right module $\,S'$,
$$\Hom_{grp}(\mathbb M(S),\Hom_{grp}(\mathbb N(S'), S'\otimes_R S))=
\Hom_{grp}(\mathbb N(S'),\Hom_{grp}(\mathbb M(S), S'\otimes_R S)) \, .$$
\end{proof}

\begin{definition} 
If $\,\mathbb{M}\,$ is an $\,\mathcal R$-module, let  $\,\mathbb M_{sch}\,$ be  the module scheme defined as follows:
$$\mathbb M_{sch}:=({\mathbb M^*}_{qc})^*.$$
\end{definition}




\begin{proposition} \label{1211} If $\,\mathbb M\,$ is an $\,\mathcal{R}$-module there exists a natural morphism
$$\,\mathbb M\longrightarrow \mathbb M_{sch}\, ,$$
and a functorial equality, for any module scheme $\,\mathcal N^*$:
 $$\Hom_{\mathcal R}(\mathbb M,\mathcal N^*)=
\Hom_{\mathcal R}(\mathbb M_{sch},\mathcal N^*) \, .$$
\end{proposition}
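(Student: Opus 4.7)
The plan is to construct the morphism $\mathbb M \to \mathbb M_{sch}$ by applying Proposition \ref{trivial} to the canonical map coming from the quasi-coherent construction, and then to verify the universal property by computing the two $\Hom$ groups using Proposition \ref{trivial}, Corollary \ref{tercerb} and Theorem \ref{reflex}.

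First, the natural morphism. Set $\mathcal P := (\mathbb M^*)_{qc}$, a right quasi-coherent $\R$-module, so that $\mathbb M_{sch} = \mathcal P^*$. For the right $\R$-module $\mathbb M^*$, the general construction of the preceding paragraphs yields a canonical morphism $(\mathbb M^*)_{qc} \to \mathbb M^*$, that is, a morphism $\mathcal P \to \mathbb M^*$ in $\Hom_{\mathcal R}(\mathcal P, \mathbb M^*)$. Now apply Proposition \ref{trivial} with $\mathbb N := \mathcal P$:
$$\Hom_{\mathcal R}(\mathbb M, \mathcal P^*) \;=\; \Hom_{\mathcal R}(\mathcal P, \mathbb M^*).$$
The canonical morphism $\mathcal P \to \mathbb M^*$ on the right-hand side corresponds to the desired natural morphism $\mathbb M \to \mathcal P^* = \mathbb M_{sch}$.

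Next, the functorial equality. On one hand, using Proposition \ref{trivial} (taking $\mathbb N := \mathcal N$) followed by Corollary \ref{tercerb},
$$\Hom_{\mathcal R}(\mathbb M, \mathcal N^*) \;=\; \Hom_{\mathcal R}(\mathcal N, \mathbb M^*) \;=\; \Hom_{\mathcal R}(\mathcal N, (\mathbb M^*)_{qc}) \;=\; \Hom_{\mathcal R}(\mathcal N, \mathcal P).$$
On the other hand, since $\mathbb M_{sch} = \mathcal P^*$ with $\mathcal P$ quasi-coherent, applying Proposition \ref{trivial} again and then the reflexivity of quasi-coherent modules (Theorem \ref{reflex}),
$$\Hom_{\mathcal R}(\mathbb M_{sch}, \mathcal N^*) \;=\; \Hom_{\mathcal R}(\mathcal P^*, \mathcal N^*) \;=\; \Hom_{\mathcal R}(\mathcal N, \mathcal P^{**}) \;=\; \Hom_{\mathcal R}(\mathcal N, \mathcal P).$$
Thus both groups are naturally identified with $\Hom_{\mathcal R}(\mathcal N, \mathcal P)$.

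Finally, I would verify that this common identification agrees with precomposition by $\mathbb M \to \mathbb M_{sch}$: any morphism $g\colon \mathbb M_{sch} \to \mathcal N^*$ pulls back along $\mathbb M \to \mathbb M_{sch}$ to a morphism $\mathbb M \to \mathcal N^*$, and chasing both through the above chains shows they correspond to the same element of $\Hom_{\mathcal R}(\mathcal N, \mathcal P)$. This is a direct unwinding of the definitions of the bijections in Proposition \ref{trivial} and Corollary \ref{tercerb}. The main obstacle is not conceptual but notational: keeping the left/right structures and the various duals aligned, and in particular making sure that $\mathcal P^{**} \cong \mathcal P$ is being invoked via the already established reflexivity theorem rather than being reproved.
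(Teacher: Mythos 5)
Your proof is correct and follows essentially the same route as the paper: the chain $\Hom_{\mathcal R}(\mathbb M,\mathcal N^*)=\Hom_{\mathcal R}(\mathcal N,\mathbb M^*)=\Hom_{\mathcal R}(\mathcal N,(\mathbb M^*)_{qc})=\Hom_{\mathcal R}(\mathbb M_{sch},\mathcal N^*)$ via Proposition \ref{trivial} and Corollary \ref{tercerb} is exactly the paper's argument, and your morphism $\mathbb M\to\mathbb M_{sch}$, obtained by transposing the canonical map $(\mathbb M^*)_{qc}\to\mathbb M^*$ through Proposition \ref{trivial}, unwinds to the paper's explicit formula $m\mapsto(w\mapsto w_S(m))$. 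If anything you are slightly more careful than the paper, which cites only Proposition \ref{trivial} for the last identification and leaves the needed reflexivity of the quasi-coherent module $(\mathbb M^*)_{qc}$ (Theorem \ref{reflex}) implicit.
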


\begin{proof} The morphism $\,\mathbb M\to \mathbb M_{sch}\,$ is defined, on any $\,R$-module $\,S$, as follows: an element $\,m \in \mathbb{M}(S)\,$ defines a morphism $\, \tilde m \colon \mathbb{M}^*(R) \to S$ via the formula $\,\tilde m (w) :=w_S(m)$, so that there exists a map
$$\mathbb M(S) \longrightarrow \Hom_R(\mathbb M^*(R),S)=\Hom_{\RR}({\mathbb M^*}_{qc},\mathcal S)=\mathbb M_{sch}(S) \ . $$ 

The last equality is a consequence of both Proposition \ref{trivial} and Corollary \ref{tercerb}:

$\Hom _{\mathcal R}(\mathbb M, \!\mathcal N^*) \! \overset{\text{\ref{trivial}}} =\!
\Hom_{\mathcal R}(\mathcal N,\!\mathbb M^*)\! \overset{\text{\ref{tercerb}}}=\!\Hom_{\mathcal R}(\mathcal N,\! (\mathbb M^*)_{qc}){\overset{\text{\ref{trivial}}}=
\Hom_{\mathcal R}(\mathbb M_{sch},\mathcal N^*)}.$

\end{proof}

\subsection{Reflexivity of quasi-coherent modules and module schemes}

\begin{theorem} \label{prop4}
Let $\,M\,$ be a right $\,R$-module and let $\,M'\,$ be an $\,R$-module. Then, the map $\, m\otimes m'\mapsto \tilde{m\otimes m'}\,$ establishes an isomorphism $${M} \otimes_{R} {M'}={\Hom}_{\mathcal R} ({\mathcal M^*}, {\mathcal M'})\, ,$$
where $\,(\tilde{m\otimes m'})_{N}(w):=w_R(m)\otimes m'$, for any $\,w\in \mathcal M^*(N)$.
\end{theorem}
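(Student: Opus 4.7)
The plan is to obtain the isomorphism as a direct instance of Yoneda's Lemma, invoked in exactly the same way as in the proof of Proposition \ref{schpro}. Since $\mathcal M^*$ is, by Corollary \ref{2.4}, the representable functor $\Hom_R(M,-)$ on $M$, Yoneda's Lemma applied to the $\mathcal R$-module $\mathcal M'$ yields a canonical bijection
$$\Hom_{\mathcal R}(\mathcal M^*, \mathcal M') \;=\; \mathcal M'(M) \;=\; M \otimes_R M'.$$
This already provides the asserted identification of groups; the only remaining task is to check that this Yoneda bijection is described by the explicit formula $m \otimes m' \mapsto \widetilde{m\otimes m'}$ of the statement.

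For this I would unwind the correspondence at the level of components. Under Yoneda, a class $z \in \mathcal M'(M)$ corresponds to the morphism $\alpha^z \colon \mathcal M^* \to \mathcal M'$ whose component at an $R$-module $N$ sends a morphism $w \in \mathcal M^*(N) = \Hom_R(M, N)$ to $\mathcal M'(w)(z) \in \mathcal M'(N)$. By the very definition of the quasi-coherent module, the functor $\mathcal M'$ acts on a morphism $w \colon M \to N$ by $\mathrm{id}_{M'} \otimes w_R$ on simple tensors. Hence, for $z = m \otimes m'$,
$$\alpha^{m\otimes m'}_N(w) \;=\; \mathcal M'(w)(m \otimes m') \;=\; w_R(m) \otimes m',$$
which is exactly $(\widetilde{m \otimes m'})_N(w)$.

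The inverse to $m \otimes m' \mapsto \widetilde{m \otimes m'}$ is then the Yoneda evaluation $f \mapsto f_M(\mathrm{id}_M) \in \mathcal M'(M) = M \otimes_R M'$, and the two assignments are mutually inverse as a formal consequence of Yoneda. Biadditivity of $(m, m') \mapsto \widetilde{m \otimes m'}$ (so that the map descends to the tensor product) and naturality in $N$ of each $\widetilde{m \otimes m'}$ are immediate from the explicit formula. I do not foresee any serious obstacle: once Yoneda is invoked the argument is a routine unpacking of definitions, the only place that requires a touch of care being the matching of the functoriality of $\mathcal M'$ on arrows with the operation $w_R \otimes \mathrm{id}$ appearing in the statement.
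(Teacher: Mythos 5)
Your proposal is correct and follows the paper's own argument: the paper likewise obtains the identification by applying Yoneda's Lemma to the representable functor $\mathcal M^*=\Hom_R(M,-)$, giving $\Hom_{\mathcal R}(\mathcal M^*,\mathcal M')=\mathcal M'(M)=M\otimes_R M'$. Your extra unwinding of the Yoneda bijection to verify the explicit formula $(\widetilde{m\otimes m'})_N(w)=w_R(m)\otimes m'$ is a correct (and slightly more detailed) elaboration of the same proof.
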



\begin{proof} As $\,\mathcal{M}^*\,$ is a functor of points, the statement readily follows applying Yoneda's Lemma: 
$${\Hom}_{\mathcal R} ({\mathcal M^*}, {\mathcal M'})=\mathcal M'(M)=M\otimes_RM' \ .$$
\end{proof}


\begin{note} \label{2.12N}  On the other hand, it is not difficult to prove that the morphism $$f=\sum_{i=1}^n m_i\otimes m'_i\in {\Hom}_{\mathcal R} ({\mathcal M^*}, {\mathcal M'})={M} \otimes_{R} { M'},$$ coincides with the the composition of the morphisms of $\,\RR$-modules
$\,\mathcal M^*\overset g\to \mathcal L\overset h\to \mathcal M'$, where $\,L\,$ is the free module with basis $\,\{l_1,\ldots,l_n\}$, $\,h_R(l_i):=m'_i\,$ for any $\,i$, and $\,g:=\sum_{i} m_i\otimes l_i\in
{\Hom}_{\mathcal R} ({\mathcal M^*}, {\mathcal L})={M} \otimes_{R} { L}$. 

With these notations, observe that $\,h\,$ factors through  the quasi-coherent module associated with the finitely generated $\,R$-module $\,\Ima h_R\subseteq M'$, and, hence, so does $\,f$.
\end{note}


If $\,\mathbb{M}\,$ is an $\,\mathcal R$-module, there exists a natural morphism $${\mathbb M}\longrightarrow {\mathbb M}^{**} \ ,$$ that maps an element $\,m\in \mathbb M(N)\,$ to $\,\tilde m\in {\mathbb M}^{**}(N) =\Hom_{\mathcal R}(\mathbb M^*,\mathcal N)$, that is defined as $\,\tilde m_{N'}(w):=w_N(m)$, for any $\,w\in \mathbb M^*(N')=\Hom_{\RR}(\mathbb M,\mathcal N')$.


\begin{theorem} \label{reflex}
For any right $\,R$-module $\,M\,$, the natural morphism $${\mathcal M}\longrightarrow {\mathcal M^{**}}$$ is an isomorphism.
\end{theorem}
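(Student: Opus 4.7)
My plan is to show that, for each left $R$-module $L$, the component $\mathcal M(L) \to \mathcal M^{**}(L)$ of the natural morphism coincides with the Yoneda-style isomorphism already provided by Theorem \ref{prop4} (applied with $M' = L$). Concretely, Theorem \ref{prop4} yields a natural isomorphism
$$M \otimes_R L \,\overset{\sim}{\longrightarrow}\, \Hom_{\mathcal R}(\mathcal M^*, \mathcal L),\quad m \otimes l \,\mapsto\, \widetilde{m \otimes l},$$
where $(\widetilde{m \otimes l})_N(w) = w_R(m) \otimes l$ for any $w \in \mathcal M^*(N)$. The left-hand side equals $\mathcal M(L)$ and the right-hand side is $\mathcal M^{**}(L)$ by definition of the double dual.

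Next I unwind the natural morphism on a decomposable tensor: by construction, an element $m \otimes l \in \mathcal M(L)$ is sent to the morphism in $\Hom_{\mathcal R}(\mathcal M^*, \mathcal L)$ whose $N$-component sends $w \in \mathcal M^*(N) = \Hom_{\mathcal R}(\mathcal M, \mathcal N)$ to $w_L(m \otimes l) \in \mathcal L(N) = N \otimes_R L$. By Proposition \ref{tercer}, every morphism of quasi-coherent modules $w\colon \mathcal M \to \mathcal N$ is determined by $w_R\colon M \to N$ via the explicit formula $w_L(m \otimes l) = \mathcal N(\cdot l)(w_R(m)) = w_R(m) \otimes l$. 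Hence the two prescriptions on $m \otimes l$ agree, and the natural morphism $\mathcal M(L) \to \mathcal M^{**}(L)$ coincides with the isomorphism of Theorem \ref{prop4}.

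Functoriality in $L$ is automatic from the construction, so the natural transformation $\mathcal M \to \mathcal M^{**}$ is a levelwise isomorphism and hence an isomorphism of $\mathcal R$-modules. The only real subtlety is the bookkeeping that identifies the two candidate descriptions of $\mathcal M(L) \cong \Hom_{\mathcal R}(\mathcal M^*, \mathcal L)$; once the rigidity of morphisms out of a quasi-coherent module (Proposition \ref{tercer}) is in hand, this reduces to a one-line verification on elementary tensors.
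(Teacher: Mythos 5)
Your proof is correct and follows essentially the same route as the paper, which also deduces the statement directly from Theorem \ref{prop4} via the chain $\mathcal M^{**}(N)=\Hom_{\mathcal R}(\mathcal M^*,\mathcal N)=M\otimes_R N=\mathcal M(N)$. Your additional verification that the natural morphism actually coincides with the isomorphism of Theorem \ref{prop4} on elementary tensors (using Proposition \ref{tercer}) is a detail the paper leaves implicit, and it is carried out correctly.
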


\begin{proof} It is a consequence of Theorem \ref{prop4}:
$$\mathcal M^{**}(N)=\Hom_{\RR}(\mathcal M^*,\mathcal N) \overset{\text{\ref{prop4}}}=M\otimes_R N=\mathcal M(N) \, .$$
\end{proof}

\begin{proposition} An $\mathcal R$-module $\mathbb M$ is a module scheme iff
it is reflexive, projective and $\Hom_{\mathcal R}(\mathbb M, \lim \limits_{\rightarrow} \mathbb M_i)=\lim \limits_{\rightarrow}\Hom_{\mathcal R}(\mathbb M, \mathbb M_i),$
for any directed system $\,\{\mathbb M_i\}_{i\in I}\,$.
\end{proposition}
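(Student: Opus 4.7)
The forward direction is essentially a summary of facts already established in the excerpt: for a module scheme $\mathbb M=\mathcal N^*$, projectivity is Proposition~\ref{schpro}, the commutation with directed colimits is Proposition~\ref{L5.11}, and reflexivity follows from Theorem~\ref{reflex} (as already mentioned in the introduction; applying the theorem to $\mathcal N$ and dualising yields the isomorphism $\mathcal N^*\cong\mathcal N^{***}$).

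For the converse, assume $\mathbb M$ satisfies the three conditions. The strategy is to show that $\mathbb M^*$ is itself quasi-coherent, i.e., that the canonical morphism $(\mathbb M^*)_{qc}\to\mathbb M^*$ is an isomorphism of right $\mathcal R$-modules. Once this is in hand, $\mathbb M_{sch}=((\mathbb M^*)_{qc})^*=\mathbb M^{**}$, and reflexivity delivers $\mathbb M\cong\mathbb M^{**}\cong\mathbb M_{sch}$, a module scheme.

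To prove $(\mathbb M^*)_{qc}\to\mathbb M^*$ is an isomorphism, I would first verify it on free modules and then propagate to arbitrary right $R$-modules via presentations. Writing $\mathcal R^{(I)}=\ilim{I'}\mathcal R^{I'}$ as a directed colimit over finite subsets $I'\subset I$, the filtered-colimit hypothesis together with the additivity of $\Hom_{\mathcal R}(\mathbb M,-)$ on finite direct sums gives $\mathbb M^*(R^{(I)})=\mathbb M^*(R)^{(I)}$, which matches $(\mathbb M^*)_{qc}(R^{(I)})=\mathbb M^*(R)\otimes_R R^{(I)}$ under the natural map. Next, both $N\mapsto\mathbb M^*(N)$ and $N\mapsto(\mathbb M^*)_{qc}(N)$ are right-exact functors of $N$: for the latter this is the right-exactness of tensor, and for the former it combines the right-exactness of $N\rightsquigarrow\mathcal N$ (i.e. of quasi-coherence) with the exactness of $\Hom_{\mathcal R}(\mathbb M,-)$ coming from projectivity of $\mathbb M$. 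Finally, given any right $R$-module $N$, choose a presentation $R^{(J)}\to R^{(I)}\to N\to 0$; applying the two functors yields a commutative diagram with right-exact rows whose first two vertical arrows are isomorphisms by the previous paragraph, and the universal property of cokernels then forces the third vertical arrow to be an isomorphism as well.

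The main obstacle is the careful interweaving of the three hypotheses, each of which plays a distinct and essential role: the directed-colimit commutation is needed to control $\mathbb M^*$ on arbitrary free modules (Step~1), projectivity is what supplies the right-exactness of $\mathbb M^*$ (Step~2), and reflexivity is what, at the very end, translates the quasi-coherence of $\mathbb M^*$ into the module-scheme structure on $\mathbb M$. None of these three ingredients can be omitted.
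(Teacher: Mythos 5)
Your argument is correct and follows essentially the same route as the paper: both reduce the converse to showing that $\mathbb M^*$ is quasi-coherent (being right-exact, by projectivity of $\mathbb M$, and commuting with direct sums, by the colimit hypothesis) and then conclude via reflexivity that $\mathbb M\cong\mathbb M^{**}=\mathbb M_{sch}$ is a module scheme. The only difference is that the paper invokes Watts' theorem at this point, whereas you prove it inline by the standard Eilenberg--Watts argument (verify on free modules, then propagate along presentations using right-exactness).
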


\begin{proof} $\Rightarrow)$ It follows from Proposition\ref{schpro}, Proposition 
\ref{L5.11} and Theorem \ref{reflex}.

$\Leftarrow)$ The functor $\mathbb M^*$ is right-exact and commute with direct sums. Then, $\mathbb M^*$ is quasi-coherent by a theorem of Watts, \cite{Watts} Th. 1., and $\mathbb M=\mathbb M^{**}$ is a module scheme.

\end{proof}


\section{Left-exact $\mathcal R$-modules}

\begin{proposition} \label{3.11} An $R$-module $\,M\,$ is a finitely generated projective $R$-module if and only if the quasi-coherent module $\,\mathcal{M}\,$ is a module scheme.

\end{proposition}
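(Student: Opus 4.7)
The plan is to reduce the statement to a question about the dual $\mathcal{R}$-module $\mathcal M^*$ using reflexivity, and then recognize the resulting condition as the dual basis characterization of finitely generated projective modules.

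The key observation is that, by Theorem \ref{reflex}, the canonical morphism $\mathcal{M} \to \mathcal{M}^{**}$ is an isomorphism. Consequently, $\mathcal M$ is a module scheme if and only if $\mathcal M^*$ is quasi-coherent: if $\mathcal M \cong \mathcal N^*$ then $\mathcal M^* \cong \mathcal N^{**} \cong \mathcal N$ is quasi-coherent (again by Theorem \ref{reflex}), and conversely if $\mathcal M^*$ is quasi-coherent, then $\mathcal M \cong \mathcal M^{**} = (\mathcal M^*)^*$ is a module scheme. Evaluating at $R$, the $R$-module underlying $\mathcal M^*$ can only be $\Hom_R(M,R) =: M^*$. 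Thus the proposition reduces to showing: $M$ is finitely generated projective if and only if the canonical map of $\mathcal R$-modules $\mathcal{M^*}\to \mathcal M^*$ (sending $\phi\otimes n$ to $(m\mapsto \phi(m)\cdot n)$) is an isomorphism.

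For ($\Rightarrow$), assume $M$ is finitely generated projective and invoke the dual basis lemma: there exist finitely many elements $m_1,\dots,m_n\in M$ and $\phi_1,\dots,\phi_n\in M^*$ satisfying $m=\sum_i \phi_i(m)\,m_i$ for all $m\in M$. From this, the inverse of the natural map $\mathcal{M^*}\to \mathcal M^*$ can be written explicitly at each $R$-module $N$ as $f\mapsto \sum_i \phi_i\otimes f(m_i)$, and this inverse is natural in $N$. Hence $\mathcal{M^*}\cong \mathcal{M}^*$ as (right) $\mathcal R$-modules, so $\mathcal M\cong (\mathcal{M^*})^*$ is a module scheme.

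For ($\Leftarrow$), assume $\mathcal M$ is a module scheme, so that the above natural morphism is an isomorphism of $\mathcal R$-modules. Specialize to $N=M$ and consider the identity $\mathrm{id}_M\in \Hom_R(M,M)=\mathcal M^*(M)$. Under the isomorphism it corresponds to some element $\sum_{i=1}^n \phi_i\otimes m_i\in M^*\otimes_R M=\mathcal{M^*}(M)$. Tracing through the isomorphism yields $m=\sum_i \phi_i(m)\,m_i$ for every $m\in M$, which is precisely the dual basis condition; hence $M$ is finitely generated projective.

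I do not expect a serious obstacle: the one point requiring care is verifying naturality in the forward direction (so that the candidate inverse assembles into a morphism of $\mathcal R$-modules, not just of abelian groups at each $N$), but this is routine once the explicit dual basis is in hand.
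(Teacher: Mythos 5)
Your proof is correct, but it takes a genuinely different route from the paper's. You use reflexivity (Thm.~\ref{reflex}) to translate ``$\mathcal M$ is a module scheme'' into ``the canonical map $N\otimes_R\Hom_R(M,R)\to\Hom_R(M,N)$ is an isomorphism'' and then invoke the classical dual basis lemma, an external fact about $R$-modules. The paper instead argues each implication inside its own functorial toolbox: for ``module scheme $\Rightarrow$ f.g.\ projective'' it uses Note~\ref{2.12N} (every morphism $\mathcal N^*\to\mathcal M$ factors through the quasi-coherent module of a finitely generated submodule, so an isomorphism forces $M$ to be finitely generated) together with the projectivity of module schemes (Prop.~\ref{schpro}), which makes $\Hom_R(M,-)$ preserve surjections; for the converse it writes $M$ as a direct summand of $R^n$ and checks that $\mathcal M\to\mathcal M_{sch}$ is an isomorphism via a retract diagram. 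Your approach buys a clean reduction to a standard statement; the paper's avoids importing the dual basis lemma and reuses lemmas it needs elsewhere. Two small points you should make explicit: (i) in the direction ``$\mathcal M$ module scheme $\Rightarrow$ $\mathcal M^*$ quasi-coherent $\Rightarrow$ the \emph{canonical} map $(\mathcal M^*)_{qc}\to\mathcal M^*$ is an isomorphism'', the last step needs the observation (contained in Prop.~\ref{tercer}) that any isomorphism $\mathcal P\overset\sim\to\mathcal M^*$ from a quasi-coherent module factors through $(\mathcal M^*)_{qc}$ with first factor induced by the isomorphism $\mathcal P(R)\to\mathcal M^*(R)$, so the canonical morphism itself is an isomorphism and not merely some abstract one; (ii) since $R$ is only associative, $\Hom_R(M,R)$ is a left $R$-module when $M$ is a right $R$-module, so the tensor should be written $N\otimes_R\Hom_R(M,R)$ rather than $\Hom_R(M,R)\otimes_R N$ --- the dual-basis computation itself is unaffected.
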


\begin{proof} If $\mathcal M\simeq \mathcal N^*$, then $M$ is a finitely generated $R$-module by Note \ref{2.12N}. The functor, $\Hom_{\R}(\mathcal M,-)\simeq \Hom_{\R}(\mathcal N^*,-)$ is exact since $\mathcal N^*$ is a projective $\R$-module, by Proposition \ref{schpro}. 
Given an epimorphism of $R$-modules $S\to T$ then the associated morphism
$\mathcal S\to \mathcal T$ is an epimorphism and the map 
$$\xymatrix{ \Hom_{R}(M,S) \ar@{=}[r] & \Hom_{\R}(\mathcal M,\mathcal S) \ar[r] &
 \Hom_{\R}(\mathcal M,\mathcal T) \ar@{=}[r]  &  \Hom_{R}(M,T)}$$
 is surjective. Therefore, $M$ is a  projective $R$-module.


Conversely, there exist an $R$-module $M'$ and an isomorphism $\, M\oplus M'\simeq R^n$. 
Hence, there exists an isomorphism $\mathcal M\oplus \mathcal M'\simeq \mathcal R^n$.
The natural morphism
${\mathcal M}\to {\mathcal M}_{sch}$ is an isomorphism since the diagram
$$\xymatrix{{\mathcal M}_{sch}\oplus {\mathcal M'}_{sch} \ar@{=}[r] & (\mathcal M\oplus \mathcal M')_{sch} \ar[r]^-{\sim}  & (\mathcal R^n)_{sch} \ar@{=}[d]  \\  & \mathcal M\oplus \mathcal M'  \ar[u] \ar[ul] \ar[r]^-{\sim} & \mathcal R^n}$$
is commutative.
\end{proof}

\begin{lemma} \label{lemar} Let $\,\mathbb M$ be an $\,\R$-module.
If  $\,\mathbb M\,$ is left-exact, then $\,\Hom_{\mathcal R}(-,\mathbb M)\,$ is a left-exact functor  from the category of module schemes to the category of abelian groups.
If  $\,\mathbb M\,$ is right-exact, then $\,\Hom_{\mathcal R}(-,\mathbb M)\,$ is a right-exact functor  from the category of module schemes to the category of abelian groups.

\end{lemma}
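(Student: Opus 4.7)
The plan is to reduce the entire statement to Yoneda's Lemma. The essential fact, already used in the proof of Proposition \ref{schpro}, is the natural isomorphism $\Hom_{\R}(\mathcal N^*, \mathbb P) = \mathbb P(N)$, valid for any $\R$-module $\mathbb P$ and any $R$-module $N$. Specialising to $\mathbb P = \mathbb M$ turns the contravariant functor $\Hom_{\R}(-, \mathbb M)$ on module schemes, precomposed with the contravariant assignment $N \mapsto \mathcal N^*$, into the covariant functor $\mathbb M$ on $R$-modules.

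The second step is to note that $N \mapsto \mathcal N^*$ realises an anti-equivalence between the category of $R$-modules and the full subcategory of module schemes inside $\R$-modules; this is immediate from Theorem \ref{prop4}, which yields $\Hom_{\R}(\mathcal N^*, \mathcal M^*) = \Hom_R(M, N)$ (full faithfulness), together with the definition of a module scheme (essential surjectivity). Under this anti-equivalence, an exact sequence of module schemes $\mathcal N_1^* \to \mathcal N_2^* \to \mathcal N_3^* \to 0$ corresponds to an exact sequence $0 \to N_3 \to N_2 \to N_1$ of $R$-modules, and symmetrically with arrows reversed in the right-exact case.

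The conclusion is then immediate. Applying left-exactness of $\mathbb M$ to $0 \to N_3 \to N_2 \to N_1$ produces an exact sequence $0 \to \mathbb M(N_3) \to \mathbb M(N_2) \to \mathbb M(N_1)$, which after re-identification via Yoneda reads as $0 \to \Hom_{\R}(\mathcal N_3^*, \mathbb M) \to \Hom_{\R}(\mathcal N_2^*, \mathbb M) \to \Hom_{\R}(\mathcal N_1^*, \mathbb M)$; the right-exact case is strictly symmetric. I do not expect any genuine obstacle, only the minor bookkeeping of fixing a consistent reading of \emph{exact sequence of module schemes}, which I take to mean a sequence exact in the abelian subcategory of module schemes (equivalently, the image under $N \mapsto \mathcal N^*$ of an exact sequence of $R$-modules after reversing arrows).
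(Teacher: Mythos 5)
Your argument is correct and is essentially the paper's own proof: the paper's one-line justification is precisely the Yoneda identification $\Hom_{\mathcal R}(\mathcal N^*,\mathbb M)=\mathbb M(N)$ combined with the observation that exactness of $\mathcal N_1^*\to\mathcal N_2^*\to\mathcal N_3^*$ in the category of module schemes is equivalent to exactness of $N_3\to N_2\to N_1$. The only quibble is your citation for full faithfulness of $N\mapsto\mathcal N^*$: Theorem \ref{prop4} computes $\Hom_{\mathcal R}(\mathcal M^*,\mathcal M')$ for a quasi-coherent target, whereas the identity you actually need, $\Hom_{\mathcal R}(\mathcal N^*,\mathcal M^*)=\mathcal M^*(N)=\Hom_R(M,N)$, is the Yoneda/functor-of-points computation recorded just after the definition of module scheme (via Corollary \ref{2.4}).
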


\begin{proof} Observe that $\Hom_{\mathcal R}(\mathcal N^*,\mathbb M)=\mathbb M(N)$ and the sequence of $\RR$-module morphisms $\mathcal N_1^*\to \mathcal N_2^*\to \mathcal N_3^*$ is exact in the category of module schemes iff  the sequence of $R$-module morphisms 
$N_3\to N_2\to N_1$ is exact.
\end{proof}

We will say that $\mathbb M$ is a left-exact $\R$-module if it is an $\R$-module and a left-exact functor.

\begin{lemma} \label{lemas} Let $\,\mathbb M\,$ be a left-exact  $\,\R$-module and  let $\,f\colon \mathcal N^*\to\mathbb M\,$ be an $\,\RR$-module morphism. If $0\neq m\in \Ker f_S\subseteq \mathbb M(S)$, then there exists 
a submodule  $N'\underset\neq \subset N$, such that $f$ (uniquely) factors through the induced morphism $\pi\colon \mathcal N^*\to\mathcal N'^*$ and $\pi_S(m)=0$ .\end{lemma}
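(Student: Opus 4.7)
My plan is to take advantage of Yoneda's Lemma to translate the statement about the morphism $f\colon \mathcal N^*\to\mathbb M$ into a statement about a distinguished element $m_f\in\mathbb M(N)$, and then exploit the left-exactness of $\mathbb M$ applied to the natural factorization of $m\colon N\to S$ through its image.

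More precisely, by Yoneda's Lemma the morphism $f$ corresponds to the element $m_f:=f_N(\mathrm{id}_N)\in \mathbb M(N)$, and the effect of $f_S$ on any $g\in\mathcal N^*(S)=\Hom_R(N,S)$ is $f_S(g)=\mathbb M(g)(m_f)$. The hypothesis then reads $\mathbb M(m)(m_f)=0$ in $\mathbb M(S)$. I would set $N':=\Ker m\subseteq N$; since $m\neq 0$ this inclusion $i\colon N'\hookrightarrow N$ is proper, and the morphism $\pi\colon\mathcal N^*\to\mathcal N'^*$ induced by $i$ (pre-composition with $i$) obviously satisfies $\pi_S(m)=m\circ i=0$.

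The substantive step is producing the factorization. Consider the short exact sequences of $R$-modules
\[ 0\to N'\overset{i}\to N\to \Ima m\to 0\qquad\text{and}\qquad 0\to \Ima m\to S. \]
Applying the left-exact functor $\mathbb M$ yields that $\mathbb M(i)\colon \mathbb M(N')\to\mathbb M(N)$ is injective with cokernel embedded in $\mathbb M(\Ima m)$, and that $\mathbb M(\Ima m)\hookrightarrow \mathbb M(S)$ is injective. The composition $N\to\Ima m\hookrightarrow S$ equals $m$, so the image of $m_f$ in $\mathbb M(\Ima m)$ maps to $\mathbb M(m)(m_f)=0\in \mathbb M(S)$; by the injectivity just noted, $m_f$ already vanishes in $\mathbb M(\Ima m)$. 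By exactness at $\mathbb M(N)$, there exists a (unique, by injectivity of $\mathbb M(i)$) element $m_{\tilde f}\in\mathbb M(N')$ with $\mathbb M(i)(m_{\tilde f})=m_f$. Let $\tilde f\colon \mathcal N'^*\to\mathbb M$ be the morphism corresponding to $m_{\tilde f}$ via Yoneda; a direct check that $f_N(\mathrm{id}_N)=\tilde f_N(\pi_N(\mathrm{id}_N))=\mathbb M(i)(m_{\tilde f})=m_f$ shows $f=\tilde f\circ\pi$, and uniqueness of $\tilde f$ is built into the construction.

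There is no real obstacle here: the only delicate point is recognising that left-exactness must be invoked twice—once to recover $m_{\tilde f}$ from the exact sequence involving $N'$, and once on the inclusion $\Ima m\hookrightarrow S$ in order to transport the vanishing of $f_S(m)$ back to a vanishing in $\mathbb M(\Ima m)$.
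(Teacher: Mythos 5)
Your proof is correct and is essentially the paper's argument: both identify $N'=\Ker m$ (reading $m$ as an element of $\mathcal N^*(S)=\Hom_R(N,S)$, correcting the typo in the statement) and both obtain the unique factorization from the exactness of $0\to \mathbb M(N')\to\mathbb M(N)\to\mathbb M(S)$, which is left-exactness of $\mathbb M$ applied to $0\to N'\to N\to S$. The paper phrases this via $\Hom_{\mathcal R}(-,\mathbb M)$ on module schemes (its Lemma on left-exactness) and you phrase it via Yoneda and the element $m_f=f_N(\mathrm{id}_N)$, but these are the same computation.
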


\begin{proof} We can consider $\,m\in \Ker f_S\subseteq \mathcal N^*(S)=\Hom_{\RR}(\mathcal S^*,\mathcal N^*)$ as a morphism $\tilde m\colon \mathcal S^*\to \mathcal N^*$, and it holds $\,f\circ \tilde m=0$ and $\tilde m_S(Id_S)=m$ (where $Id_S\in \mathcal S^*(S)$ is the identity morphism). By Lemma \ref{lemar}, $\,f\,$ uniquely factors through the module scheme associated with $\,N':=\Ker \tilde m^*_R\subseteq N$, which is different from $N$ since $\tilde m^*_R\neq 0$ (since $\tilde m\neq 0$). Besides, the composite morphism $\mathcal S^*\overset{\tilde m}\to \mathcal N^*\overset\pi\to \mathcal N'^*$ is zero, hence
$\pi_S(m)=\pi_S(\tilde m_S(Id_S))=0$.

\end{proof}

Unfortunately the category of $R$-modules  is not small. If it were small then any $\R$-module would be a quotient $\R$-module of a direct sum of module schemes. 

\begin{theorem} \label{Lmodule} Let $\mathbb M$ be an $\R$-module and assume that there exists an epimorphism $\pi\colon \oplus_{i\in I} \mathcal W_i^*\to \mathbb M$. Then, $\mathbb M$ is a left-exact $\R$-module iff $\,\mathbb M$ is a direct limit of module schemes.\end{theorem}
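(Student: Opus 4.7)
The $(\Leftarrow)$ direction is formal: each module scheme $\mathcal N^*$ is left-exact by Proposition \ref{schpro}, and filtered colimits commute with finite limits in $\mathrm{Ab}$, so any direct limit of module schemes is again a left-exact $\mathcal R$-module.

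For $(\Rightarrow)$, I plan to realize $\mathbb M$ explicitly as the colimit of the underlying diagram of its category of elements. Let $\mathcal C_{\mathbb M}$ denote the category whose objects are pairs $(\mathcal N^*, f\colon \mathcal N^*\to \mathbb M)$ and whose morphisms $(\mathcal N^*, f) \to (\mathcal N'^*, f')$ are $\mathcal R$-module morphisms $g\colon \mathcal N^*\to \mathcal N'^*$ with $f'\circ g = f$; by Yoneda's lemma such a pair is equivalently a pair $(N, m\in\mathbb M(N))$. The tautological cocone $\{f\}$ induces a natural map $\phi\colon \ilim{\mathcal C_{\mathbb M}} \mathcal N^* \to \mathbb M$, and my plan is to check that (i) $\mathcal C_{\mathbb M}$ is filtered, (ii) $\phi$ is an isomorphism, and (iii) $\mathbb M$ can be written as a direct limit over an essentially small cofinal subdiagram.

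For (i), upper bounds are provided by finite direct sums, since $\mathcal N^*\oplus \mathcal N'^* \simeq \mathcal L^*$ with $L := N \oplus N'$: the pair $(\mathcal L^*, (f, f'))$ dominates $(N, f)$ and $(N', f')$ via the arrows induced by the projections of $L$. For two parallel arrows $g_1, g_2\colon (N, f) \to (N', f')$ with corresponding $R$-module maps $\tilde g_1, \tilde g_2\colon N'\to N$ (Yoneda), set $K := \Ker(\tilde g_1 - \tilde g_2) \hookrightarrow N'$. The Yoneda element $m' \in \mathbb M(N')$ of $f'$ satisfies $\mathbb M(\tilde g_i)(m') = m$ (from $f'g_i = f$), hence lies in $\Ker(\mathbb M(\tilde g_1) - \mathbb M(\tilde g_2))$; left-exactness of $\mathbb M$ identifies this kernel with $\mathbb M(K)$, so $m'$ lifts uniquely to some $m'' \in \mathbb M(K)$, yielding an object $(K, f'')$ and a morphism $h\colon (N', f') \to (K, f'')$ equalizing $g_1$ and $g_2$. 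For (ii), surjectivity of $\phi$ is immediate because every $m \in \mathbb M(S)$ arises from the object $(S, m)$, whose associated $f\colon \mathcal S^* \to \mathbb M$ sends $\Id_S$ to $m$; injectivity is the content of Lemma \ref{lemas}, since any $\psi \in \mathcal N^*(S)$ at some $(N, f)$ with $f_S(\psi) = 0$ is killed by the morphism $\mathcal N^* \to \mathcal N'^*$ for a proper submodule $N' \subsetneq N$ through which $f$ factors.

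For (iii), the epimorphism hypothesis enters essentially through projectivity: since each $\mathcal N^*$ is projective (Proposition \ref{schpro}), any $f\colon \mathcal N^* \to \mathbb M$ lifts along $\pi\colon \oplus_i \mathcal W_i^* \to \mathbb M$, and because $\Hom_{\mathcal R}(\mathcal N^*, \oplus_i \mathcal W_i^*) = \oplus_i \Hom_R(W_i, N)$, the lift factors through some $\mathcal W_J^* = \oplus_{j \in J} \mathcal W_j^*$ with $J \subset I$ finite. Thus every object $(N, f) \in \mathcal C_{\mathbb M}$ admits a morphism to some $(W_J, \pi_J)$, and the essentially small full subcategory on those pairs $(L, h)$ with $|L|$ bounded by a fixed regular cardinal $\kappa \geq \sup_j |W_j|$ contains all $(W_J, \pi_J)$, is closed under the constructions of (i), and is cofinal in $\mathcal C_{\mathbb M}$; this exhibits $\mathbb M$ as the direct limit of a small directed system of module schemes. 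I expect the main subtlety to be the coequalizer construction in (i), since this is the step where left-exactness is used essentially, via the lift of the Yoneda element $m'$ to $\mathbb M(K)$.
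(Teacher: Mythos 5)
Your argument is correct, but it is organized rather differently from the paper's, so a comparison is worth making. You take the canonical route through the category of elements $\mathcal C_{\mathbb M}$ of pairs $(\mathcal N^*,f)$, prove it is filtered by verifying the coequalizer condition directly — lifting the Yoneda element $m'$ to $\mathbb M(K)$ via left-exactness applied to $0\to K\to N'\to N$ — and then cut down to a small cofinal full subcategory by a cardinality bound extracted from the presentation $\pi$. The paper instead builds a concrete directed \emph{poset} from the outset: its index set consists of pairs $(j,\mathcal V^*)$ with $j\subseteq I$ finite and $\mathcal V^*$ a module-scheme quotient of $\mathcal W_j^*=\oplus_{i\in j}\mathcal W_i^*$ through which $\pi_j$ factors, ordered by inclusion of kernels; directedness comes from Lemma \ref{lemar} (left-exactness of $\Hom_{\mathcal R}(-,\mathbb M)$ on module schemes, which is the same use of left-exactness as your equalizer lift, phrased dually), and smallness is automatic because the quotients of a fixed $\mathcal W_j^*$ form a set. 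Both proofs use Lemma \ref{lemas} in exactly the same way for injectivity, and both use projectivity of module schemes plus the finite-support property of $\oplus_i\mathcal W_i^*$ to reduce to finite subsets of $I$. Your version is more visibly Grothendieck's representability argument, at the cost of two presentational debts you should settle: (a) the colimit over the large category $\mathcal C_{\mathbb M}$ is not a priori computed objectwise as a set, so either run step (iii) before step (ii) or state (ii) only for the small subcategory $\mathcal D$ — your surjectivity and injectivity arguments go through verbatim there, since Lemma \ref{lemas} produces a submodule $N'\subseteq N$ and hence stays inside $\mathcal D$; and (b) what you produce is a small filtered category rather than literally a directed system, so to match the statement you should invoke the standard re-indexing of a filtered colimit by a cofinal directed poset (or, as the paper does, index by kernels so that the system is a poset from the start). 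Neither point is a mathematical gap.
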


\begin{proof} $\Leftarrow)$ $\mathbb M$ is left-exact since it is a direct limit of left-exact functors.

$\Rightarrow)$ Let $J$ be the set of all finite subsets  of $I$.
For each, $j\in J$, put $W_j:=\oplus_{i\in j} W_i$ and 
let $\pi_j$ be the composition $\mathcal W_j^*=\oplus_{i\in j} \mathcal W_i^*\hookrightarrow \oplus_{i\in I} \mathcal W_i^*\overset\pi\to \mathbb M$. Let $K$ be the set of all the pairs $(j,\mathcal V^*)$, where $j\in J$ and  $\mathcal V^*$  is a module scheme quotient of $\mathcal W_j^*$, in the category of module schemes, such that $\pi_j$ (uniquely) factors through the natural morphism $\mathcal W_j^*\to \mathcal V^*$. Given $(j,\mathcal V^*), (j',\mathcal V'^*)\in K$, we say that $(j,\mathcal V^*)\leq (j',\mathcal V'^*)$ if $j\subseteq j'$ and $\Ker[\mathcal W_j^*\to \mathcal V^*]\subseteq \Ker[\mathcal W^*_{j'}\to \mathcal V'^*]$, then we have
the obvious commutative diagram 
$$\xymatrix{\mathcal W_j^* \ar@{^{(}->}[r] \ar[d] &  \mathcal W_{j'}^* \ar[r]^-{\pi_{j'}}  \ar[d] & \mathbb M\\ \mathcal V^* \ar[r] & \mathcal V'^* \ar[ru] &}$$
Given  $(j,\mathcal V^*), (j',\mathcal V'^*)\in K$, put $j''=j\cup j'$, $\mathcal V_1^*:=\Ker[\mathcal W_j^*\to \mathcal V^*]$ (that is, $V_1=W_j/V$) and   $\mathcal V_1'^*:=\Ker[\mathcal W_{j'}^*\to \mathcal V'^*]$ (that is, $V'_1=W_{j'}/V'$) and let $\mathcal V''^*$  be the cokernel in the category of modules schemes of the obvious morphism $\mathcal V_1^*\oplus \mathcal V_1'^*\to \mathcal W^*_{j''}$.
By Lemma \ref{lemar}, $(j,\mathcal V^*), (j',\mathcal V'^*)\leq (j'',\mathcal V''^*)$. Hence, $K$ is an upward directed set.
Let us prove that $\ilim{(j,V)\in K} \mathcal V^*\simeq \mathbb M$.  

The natural morphism $\ilim{(j,V)\in K} \mathcal V^*\to \mathbb M$ is an epimorphism: 
Given $m\in \mathbb M(S)$ there exist $m'\in \oplus_{i\in I} \mathcal W_i^*(S)$
such that $\pi(m')=m$. Obviously, $m'\in \mathcal W^*_j(S)$, for some $j\in J$. Hence, 
if $V=W_j$, $m\in \Ima[\mathcal V^*(S)\to\mathbb M(S)]$. 

The natural morphism $\ilim{(j,\mathcal V^*)\in K} \mathcal V^*\to \mathbb M$ is a monomorphism: Given $$\bar m\in  \Ker[\ilim{(j,V)\in K} \mathcal V^*(S)\to \mathbb M(S)],$$ there exist $(j,V)\in K$ and $m\in \Ker[\mathcal V^*(S)\to\mathbb M(S)]$,
such that the equivalence class of $m$ is $\bar m$.
 By Lemma \ref{lemas}, there exists a submodule $V'\subseteq V\subseteq W_j$ such that the morphism
$\mathcal V^*\to \mathbb M$ factors through $\mathcal V'^*$ and $m\in 
\Ker[\mathcal V^*(S)\to\mathcal V'^*(S)]$. Hence, $\bar m=0$.

\end{proof}

For a characterization of left-exact functors in abstract categories,  see \cite{Adamek}.

\begin{corollary} \label{C3.7} An $\,R$-module $\,M\,$ is flat if and only if the quasi-coherent module $\,\mathcal M\,$ is a direct limit of $\,\mathcal R$-module schemes.
\end{corollary}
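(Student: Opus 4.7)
The plan is to deduce this corollary from Theorem \ref{Lmodule}, applied to $\mathbb M = \mathcal M$. Two things must be checked: first, the hypothesis of Theorem \ref{Lmodule} (existence of an epimorphism from a direct sum of module schemes onto $\mathbb M$) holds for $\mathcal M$; second, left-exactness of the functor $\mathcal M$ is equivalent to flatness of $M$.

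For the first point, I would choose any presentation $R^{(I)}\twoheadrightarrow M$ of the right $R$-module $M$ by a free module. The associated morphism of quasi-coherent modules $\mathcal R^{(I)}\to \mathcal M$ is an epimorphism, because evaluating at any $R$-module $N$ gives the right-exact tensor product $R^{(I)}\otimes_R N\twoheadrightarrow M\otimes_R N$. Moreover $\mathcal R = \mathcal R^*$ (both equal the identity functor, since $\Hom_R(R,N)=N=R\otimes_R N$), and direct sums of quasi-coherent modules correspond to direct sums of modules, so $\mathcal R^{(I)} = \bigoplus_{i\in I}\mathcal R = \bigoplus_{i\in I}\mathcal R^*$. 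This exhibits the desired epimorphism $\bigoplus_{i\in I}\mathcal R^*\twoheadrightarrow \mathcal M$.

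For the second point, by definition $\mathcal M(N)=M\otimes_R N$, so $\mathcal M$ is always right-exact. Hence left-exactness of the functor $\mathcal M$ is equivalent to asking that whenever $0\to N_1\to N_2$ is an exact sequence of $R$-modules, the induced sequence $0\to M\otimes_R N_1\to M\otimes_R N_2$ is also exact; i.e.\ that $M\otimes_R-$ preserves monomorphisms, which is the definition of flatness of $M$.

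Combining both facts with Theorem \ref{Lmodule} yields the claimed equivalence. There is no real obstacle in the argument; the only care needed is in the bookkeeping of left/right module structures and in the identification $\mathcal R=\mathcal R^*$, so that the free presentation of $M$ produces an epimorphism from a direct sum of module schemes, as required by the hypothesis of Theorem \ref{Lmodule}.
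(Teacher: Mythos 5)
Your proposal is correct and is exactly the argument the paper intends: the corollary is stated as an immediate consequence of Theorem \ref{Lmodule}, using the epimorphism $\oplus_{i\in I}\mathcal R^*=\mathcal R^{(I)}\twoheadrightarrow\mathcal M$ coming from a free presentation of $M$ together with the identification $\mathcal R=\mathcal R^*$, and the observation that $\mathcal M(N)=M\otimes_R N$ is always right-exact, so its left-exactness is precisely flatness of $M$. No gaps.
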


\begin{observation} \label{OLazard} Given $f=\sum_{i=1}^r n_i\otimes m_i\in \Hom_{\R}(\mathcal N^*,\mathcal M^*)=N \otimes_R M$, put $N':=\langle n_1,\ldots,n_r
\rangle \subseteq N$. Then, $f$  is the composite morphism of  the natural morphism $\mathcal N^*\to \mathcal N'^*$ and $g=\sum_{i=1}^r n_i\otimes m_i\in \Hom_{\R}(\mathcal N'^*,\mathcal M^*)=N' \otimes_R M$.  

Then, in Theorem \ref{Lmodule}, if $\mathbb M=\mathcal M$ is quasi-coherent, we can suppose in the proof of this theorem that $V$, $V'$, etc. are finitely generated modules. Then, $\mathcal M$ is a direct limit of module schemes $\mathcal V_j^*$, where $V_j$ are finitely generated $R$-modules.

\end{observation}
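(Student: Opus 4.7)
The observation consists of two claims: first, the factorization statement for $f = \sum_{i=1}^r n_i \otimes m_i \in \Hom_\R(\mathcal N^*, \mathcal M^*)$, and second, the resulting strengthening of Theorem \ref{Lmodule} when $\mathbb M = \mathcal M$ is quasi-coherent.

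For the factorization, I would invoke Theorem \ref{prop4} to identify simultaneously $\Hom_\R(\mathcal N^*, \mathcal M^*) = N \otimes_R M$ and $\Hom_\R(\mathcal N'^*, \mathcal M^*) = N' \otimes_R M$. Since each $n_i$ already lies in $N'$, the same formal sum $g = \sum n_i \otimes m_i$ represents a well-defined element of $N' \otimes_R M$, hence a morphism $\mathcal N'^* \to \mathcal M^*$. The natural morphism $\mathcal N^* \to \mathcal N'^*$ is the one obtained from the inclusion $N' \hookrightarrow N$ by applying the dualization functor (restriction of homomorphisms). By the naturality of Theorem \ref{prop4} in the first variable, precomposing $g$ with $\mathcal N^* \to \mathcal N'^*$ corresponds to applying the map $N' \otimes_R M \to N \otimes_R M$ induced by $N' \hookrightarrow N$, which sends $\sum n_i \otimes m_i$ (viewed in $N' \otimes_R M$) to $\sum n_i \otimes m_i$ (viewed in $N \otimes_R M$). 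This is precisely $f$.

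For the second claim, I would revisit the proof of Theorem \ref{Lmodule} under the assumption that $\mathbb M = \mathcal M$ is quasi-coherent. There, the directed set $K$ consists of pairs $(j, \mathcal V^*)$ with $V \subseteq W_j$ through which $\pi_j \colon \mathcal W_j^* \to \mathcal M$ factors. Because $\mathcal M$ is quasi-coherent, Theorem \ref{prop4} identifies $\pi_j$ with a \emph{finite} sum $\sum_{k=1}^{r_j} w_k \otimes m_k \in W_j \otimes_R M$, and by the first part of the observation, $\pi_j$ factors through $\mathcal{W'}^*$ with $W' := \langle w_1, \ldots, w_{r_j} \rangle \subseteq W_j$ finitely generated. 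Consequently, the subposet $K' \subseteq K$ of pairs with $V$ finitely generated is cofinal: any existing $(j, \mathcal V^*) \in K$ can be refined to an element of $K'$, and in the upper-bound construction $(j'', \mathcal V''^*)$ the same argument allows us to replace $V''$ by a finitely generated submodule through which $\pi_{j''}$ still factors. Hence $\mathcal M = \ilim{(j, V) \in K'} \mathcal V^*$ is a direct limit of module schemes $\mathcal V^*$ with $V$ finitely generated.

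The main obstacle is verifying the naturality step in Theorem \ref{prop4} carefully, that is, checking that the isomorphism $\Hom_\R(\mathcal N^*, \mathcal M^*) = N \otimes_R M$ is compatible with precomposition by the morphism $\mathcal N^* \to \mathcal N'^*$ coming from $N' \hookrightarrow N$. This is essentially a Yoneda-style computation and reduces to matching the explicit formulas given in the proof of Theorem \ref{prop4} on tensors of the form $n_i \otimes m_i$; the remaining bookkeeping for the cofinality argument in Stage 2 is routine.
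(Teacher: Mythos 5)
Your argument is correct and is exactly the justification the paper intends (the Observation is stated without proof): the factorization follows from the naturality in the first variable of the identification of Theorem \ref{prop4}, and the finitely generated refinement follows by cofinality of the finitely generated pairs in the directed set $K$ of the proof of Theorem \ref{Lmodule}. Note only that the displayed identification $\Hom_{\R}(\mathcal N^*,\mathcal M^*)=N\otimes_R M$ really requires the target to be the quasi-coherent module $\mathcal M$ rather than the module scheme $\mathcal M^*$ (for which Yoneda gives $\Hom_R(M,N)$ instead); since your application in the second part correctly uses the quasi-coherent target $\mathcal M$, this does not affect your argument.
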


%

\begin{theorem} \label{SLazard}
Let $\mathbb M$ be an $\R$-module and assume that there exists an epimorphism $\pi\colon \oplus_{i\in I} \mathcal W_i^*\to \mathbb M$. Then, $\mathbb M$ is an exact $\R$-module iff $\,\mathbb M=\ilim{j\in J} \mathcal L_j^*$, where $L_j$ are free $R$-modules.
\end{theorem}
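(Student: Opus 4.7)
The backward implication is immediate: if $L$ is free, then $\mathcal L^{*}=\Hom_R(L,-)$ is a product of copies of the identity functor on $R$-modules and hence exact (products are exact in $R$-mod); and direct limits of exact functors are again exact.

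For the forward implication, I first reduce to the case in which every $W_i$ is free. Picking free covers $L_i \twoheadrightarrow W_i$ yields monomorphisms $\mathcal W_i^{*} \hookrightarrow \mathcal L_i^{*}$ of module schemes, and since $\mathbb M$ is right-exact, Lemma \ref{lemar} allows me to extend each $\mathcal W_i^{*} \to \mathbb M$ to some $\mathcal L_i^{*} \to \mathbb M$, so the resulting $\oplus_{i} \mathcal L_i^{*} \to \mathbb M$ remains an epimorphism. I then consider the category $\mathcal C$ whose objects are pairs $(L,m)$ with $L$ a free $R$-module and $m \in \mathbb M(L)$, and whose morphisms $(L_1,m_1) \to (L_2,m_2)$ are $R$-module maps $\varphi\colon L_2 \to L_1$ satisfying $\mathbb M(\varphi)(m_2) = m_1$. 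By Yoneda's lemma, objects of $\mathcal C$ correspond to $\mathcal R$-module morphisms $\mathcal L^{*} \to \mathbb M$, and $\mathcal C$-morphisms to factorisations $\mathcal L_1^{*} \to \mathcal L_2^{*} \to \mathbb M$.

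The central point is that $\mathcal C$ is filtered and that the natural map $\ilim{\mathcal C}\mathcal L^{*} \to \mathbb M$ is an isomorphism. For filteredness, the pair $(L_1,m_1),(L_2,m_2)$ admits $(L_1 \oplus L_2,(m_1,m_2))$ as common successor via the coordinate projections (using additivity of $\mathbb M$), while coequalisers of parallel arrows $\varphi,\psi\colon(L,m) \to (L',m')$ are built by observing that $\mathbb M(\varphi-\psi)(m')=0$: left-exactness of $\mathbb M$ then places $m'$ in $\mathbb M(\ker(\varphi-\psi))$, and a free cover $L'' \twoheadrightarrow \ker(\varphi-\psi)$ combined with right-exactness of $\mathbb M$ lifts $m'$ to some $m'' \in \mathbb M(L'')$, producing the desired arrow $(L',m') \to (L'',m'')$.

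For the isomorphism, surjectivity of $\ilim{\mathcal C}\mathcal L^{*}(N) \to \mathbb M(N)$ comes from lifting any $n \in \mathbb M(N)$ through a free cover $L \twoheadrightarrow N$ by right-exactness of $\mathbb M$; injectivity, given $(L,m,h \in \Hom_R(L,N))$ with $\mathbb M(h)(m)=0$, uses left-exactness to place $m$ inside $\mathbb M(\ker h)$ and then a free cover of $\ker h$ to produce a morphism of $\mathcal C$ killing $h$. Since $\mathcal C$ is filtered, the colimit can be computed along a cofinal directed subset $J$, yielding $\mathbb M = \ilim{j\in J}\mathcal L_j^{*}$ with every $L_j$ free. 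The principal technical step is the coequaliser construction, which requires both halves of exactness of $\mathbb M$ to act in tandem—left-exactness to isolate the relevant kernel, and right-exactness to represent its elements via a free module.
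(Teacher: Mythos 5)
Your argument is correct in all of its local steps and takes a genuinely different route from the paper. The paper first invokes Theorem \ref{Lmodule} to write $\mathbb M$ as the direct limit of a set-indexed directed system of module schemes $\{\mathcal N_j^*\}_{j\in J}$, and only then uses Lemma \ref{lemar} to extend each $\mathcal N_j^*\to\mathbb M$ over a free cover $\mathcal N_j^*\hookrightarrow\mathcal L_j^*$, with Proposition \ref{L5.11} supplying the re-indexing that turns the $\mathcal L_j^*$ into a directed system with the same colimit. You instead work directly with the comma category $\mathcal C$ of pairs $(L,m)$, $L$ free, $m\in\mathbb M(L)=\Hom_{\R}(\mathcal L^*,\mathbb M)$; your verification of filteredness --- in particular the coequaliser step, where left-exactness locates $m'$ in $\mathbb M(\Ker(\varphi-\psi))$ and right-exactness lifts it over a free cover --- is exactly right and is the heart of the matter, and the pointwise surjectivity/injectivity check is also correct. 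This is a one-step argument, closer in spirit to the classical proof of Lazard's theorem, and it isolates cleanly where each half of exactness is used.

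The one genuine gap is set-theoretic, and it sits precisely where the hypothesis of the theorem must be used: $\mathcal C$ is a proper class, not a small category, so "the colimit over $\mathcal C$" is not yet a direct limit over a directed set, and a large filtered category need not admit a small cofinal subcategory (the ordered class of all ordinals is directed and has no cofinal subset). Your proof uses the epimorphism $\oplus_{i\in I}\mathcal W_i^*\to\mathbb M$ only cosmetically, to replace the $W_i$ by free modules, whereas the paper's hypothesis exists exactly to repair this smallness problem. The fix is routine but must be said: since $\mathcal L^*$ is projective and $\Hom_{\R}(\mathcal L^*,\oplus_{i}\mathcal L_i^*)=\oplus_i\Hom_R(L_i,L)$, every object $(L,m)$ of $\mathcal C$ admits a morphism to some $(\oplus_{i\in j}L_i,\pi_j)$ with $j\subseteq I$ finite; one then closes this set of objects under your coequaliser construction (bounding the ranks of the free covers of the kernels by a fixed cardinal) to obtain a small cofinal filtered subcategory, after which passing to a cofinal directed poset $J$ is legitimate. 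Alternatively, first quote Theorem \ref{Lmodule} as the paper does and run your construction over the resulting set-indexed system.
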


\begin{proof} $\Rightarrow)$ By Theorem \ref{Lmodule}, $\,\mathbb M$ is the direct limit of a directed system of module schemes  $\{\mathcal N_j^*, f_{jk}\}_{j\leq k\in J}$. 
Denote $f_j$ the natural morphism $\mathcal N_j^*\to \mathbb M$.
For any $j\in J$, there exist a free module $L_j$ and an epimorphism
$\mathcal L_j\to N_j$. Let $g'_j\colon \mathcal N_j^*\hookrightarrow \mathcal L_j^*$ be the associated morphism. By Lemma \ref{lemar}, there exists a morphism $g_j\colon \mathcal L_j^*\to \mathbb M$ such that the diagram
$$\xymatrix{\mathcal  N_j^*\ar@{^{(}->}[r]^-{g'_j} \ar[rd]_-{f_j} & \mathcal L_j^* \ar[d]^-{g_j}\\
& \mathbb M}$$
is commutative. By Proposition \ref{L5.11}, there exist $j\leq \phi(j)\in J$ and a morphism
$g''_j\colon \mathcal L_j^*\to \mathcal N_{\phi(j)}^*$, such that $f_{\phi(j)}\circ g''_i=g_j$. Again by Proposition \ref{L5.11}, taking a greater $\phi(j)$, if it is necessary,
we can suppose that $f_{j\phi(j)}$ is equal to the composite morphism $\mathcal N_j^*\overset{g'_j}\to \mathcal L_j^*\overset{g''_j}\to  \mathcal N_{\phi(j)}^*$. Let $J'$
be the upward directed set defined by $J'=J$ and $j_1<'j_2$ if $\phi(j_1)<j_2$.
Consider the directed system $\{\mathcal L_j^*,  g_{jj'}=g'_{j'}\circ f_{\phi(j)j'}\circ g''_{j}\}$. Reader can easily check that $\,\mathbb M=\ilim{j\in J} \mathcal L_j^*$.

\end{proof}

\begin{observation} If $\mathbb M=\mathcal M$ is quassi-coherent, in the proof of Theorem  \ref{SLazard} we can suppose that $N_j$ are finitely generated $R$-modules, by Observation \ref{OLazard}.
Then, we can suppose that $L_j$ are finite free $R$-modules. Hence, 
any flat $R$-module $M$  is a direct limit of finite free $R$-modules (Lazard's theorem, \cite[A6.6]{eisenbud}).

\end{observation}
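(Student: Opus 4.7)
My plan is to treat the two refinements---finite generation of $N_j$ and finite freeness of $L_j$---as independent bookkeeping layered on top of the construction already produced by Theorem \ref{SLazard}, and then translate the conclusion through the equivalence of Corollary \ref{2.4}.

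First, I would specialize to $\mathbb M = \mathcal M$ with $M$ a flat right $R$-module. By Corollary \ref{C3.7}, $\mathcal M$ is a direct limit of module schemes, so the hypothesis of Theorem \ref{SLazard} is met; invoking Observation \ref{OLazard} inside the proof of Theorem \ref{Lmodule} (the latter being what manufactures the initial directed system that Theorem \ref{SLazard} refines), I may arrange that $\mathcal M = \ilim{j \in J} \mathcal N_j^*$ with every $N_j$ a finitely generated $R$-module. This is the one mildly delicate step: one must verify that the indexing set $K$ built in the proof of Theorem \ref{Lmodule} stays cofinal when restricted to those $(j, \mathcal V^*)$ with $V$ finitely generated, which is precisely the content of Observation \ref{OLazard}.

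Next, I would rerun the argument of Theorem \ref{SLazard} on this refined system. The only step where the rank of $L_j$ intervenes is its initial choice as a free module surjecting onto $N_j$; for a finitely generated $N_j$ one may take $L_j = R^{n_j}$ finite free. The induced monomorphism $\mathcal N_j^* \hookrightarrow \mathcal L_j^*$, the application of Lemma \ref{lemar} to extend morphisms through $\mathcal L_j^*$, and the cofinality juggling via Proposition \ref{L5.11} all proceed verbatim, since none of them constrains the rank of $L_j$. The output is $\mathcal M = \ilim{j} \mathcal L_j^*$ with all $L_j$ finite free.

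Finally, since $L_j = R^{n_j}$, both $\mathcal L_j$ and $\mathcal L_j^*$ are isomorphic to $\mathcal R^{n_j}$ and hence quasi-coherent (this is the easy direction of Proposition \ref{3.11}). Thus $\mathcal M \simeq \ilim{j} \mathcal R^{n_j}$, and applying the equivalence of categories in Corollary \ref{2.4} descends this to an isomorphism $M \simeq \ilim{j} R^{n_j}$ in the category of $R$-modules. This is exactly Lazard's theorem.
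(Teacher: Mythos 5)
Your proposal is correct and follows essentially the same route as the paper: invoke Observation \ref{OLazard} to make the $N_j$ finitely generated, choose the $L_j$ finite free, observe that the rest of the proof of Theorem \ref{SLazard} is insensitive to the rank of $L_j$, and descend to $R$-modules via $\mathcal L_j^*\simeq \mathcal R^{n_j}$ and the equivalence of Corollary \ref{2.4}. The paper leaves all of this implicit in a three-line remark; you have simply written out the same argument in full.
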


\section{Left-exact ML $\mathcal R$-modules}

Recall the definition of ML $\R$-module (Definition \ref{DML}). Module schemes $\mathcal M^*$ are obviously left-exact ML $\R$-modules.

\begin{proposition} \label{4.1} Every $\R$-submodule of an ML $\R$-module is an ML $\R$-module. Infinite direct products of ML $\R$-modules are ML $\R$-modules. A direct limit of ML $\R$-submodules of an $\R$-module is an ML $\R$-module.

\end{proposition}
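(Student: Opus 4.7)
The plan is to handle the three assertions separately, and in each case to reduce the injectivity of the ML morphism for the new module to the injectivity already available for the constituent modules, via a natural commutative diagram.

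For the first assertion, let $\mathbb M'\hookrightarrow \mathbb M$ be an $\R$-submodule of an ML $\R$-module. Given a family $\{N_i\}$, I would write down the obvious commutative square whose vertical arrows come from the inclusion $\mathbb M'(N)\hookrightarrow \mathbb M(N)$ (evaluated at $\prod_i N_i$ on the left and with a product taken on the right) and whose horizontal arrows are the canonical maps $\mathbb M'(\prod N_i)\to \prod \mathbb M'(N_i)$ and $\mathbb M(\prod N_i)\to \prod \mathbb M(N_i)$. The bottom horizontal arrow is injective by hypothesis, and the right-hand vertical arrow is injective as a product of injective maps, so the top horizontal arrow is injective too.

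For the second assertion, let $\{\mathbb M_j\}_{j\in J}$ be ML $\R$-modules. Using that products commute with evaluation in the category of $\R$-modules and with themselves, the morphism $(\prod_j \mathbb M_j)(\prod_i N_i)\to \prod_i (\prod_j\mathbb M_j)(N_i)$ is canonically identified with $\prod_j\bigl(\mathbb M_j(\prod_i N_i)\to \prod_i \mathbb M_j(N_i)\bigr)$ after interchanging the two products on the target. Each factor is injective by hypothesis, so the product is injective.

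The third assertion is the main point. Let $\{\mathbb M_\alpha\}$ be a directed system of ML $\R$-submodules of an $\R$-module $\mathbb L$, and set $\mathbb M:=\ilim{\alpha}\mathbb M_\alpha$. Since the transition maps $\mathbb M_\alpha\hookrightarrow \mathbb M_\beta$ are forced to be monomorphisms (they are inclusions of submodules of $\mathbb L$), the natural morphism $\mathbb M\to \mathbb L$ is a monomorphism, and for every $R$-module $N$ one has $\mathbb M(N)=\bigcup_\alpha \mathbb M_\alpha(N)\subseteq \mathbb L(N)$. The key (and only delicate) step is now: given $m\in \mathbb M(\prod_i N_i)$ mapping to $0$ in $\prod_i \mathbb M(N_i)$, pick $\alpha$ with $m\in \mathbb M_\alpha(\prod_i N_i)$. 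For each $i$, the image $\mathbb M_\alpha(\pi_i)(m)\in \mathbb M_\alpha(N_i)$ becomes $0$ in $\mathbb M(N_i)$, hence in $\mathbb L(N_i)$, and, because $\mathbb M_\alpha(N_i)\hookrightarrow \mathbb L(N_i)$, already in $\mathbb M_\alpha(N_i)$. Since $\mathbb M_\alpha$ is ML, $m=0$.

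The main obstacle is precisely this last step: without embedding everything in a common $\mathbb L$, one would need a single index $\beta\geq \alpha$ killing the components $\mathbb M_\alpha(\pi_i)(m)$ for all $i$ simultaneously, which is not available when the family $\{N_i\}$ is infinite. The hypothesis that the $\mathbb M_\alpha$ are submodules of a fixed $\R$-module is exactly what bypasses this difficulty.
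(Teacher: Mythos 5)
Your proposal is correct and follows essentially the same route as the paper: the paper writes out only the third assertion, observing that each composite $\mathbb M_\alpha(\prod_i N_i)\to \prod_i\mathbb M_\alpha(N_i)\to\prod_i\mathbb M(N_i)$ is injective and passing to the direct limit, which is exactly your element-chase (the detour through the ambient $\mathbb L$ is harmless but unnecessary, since $\mathbb M_\alpha(N_i)\hookrightarrow\mathbb M(N_i)$ already suffices). The first two assertions, which the paper leaves to the reader, are handled by you with the standard diagram and product-interchange arguments.
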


\begin{proof} Let us only check the last sentence. Put $\mathbb M=\ilim{i\in I}\mathbb M_i$, where $\mathbb M_i\subseteq \mathbb M$ is an ML $\R$-module, for any $i\in I$. Let $\{N_j\}_{j\in J}$ be a set of $R$-modules. Then, the composite morphism $\mathbb M_i(\prod_j N_j)\to \prod_j \mathbb M_i(N_j)\to \prod_j \mathbb M(N_j)$ is injective and taking $\ilim{i}$ the morphism
$\mathbb M((\prod_j N_j))\to \prod_j \mathbb M(N_j)$ is injective. Hence, $\mathbb M$ is an ML $\R$-module.

\end{proof}

\begin{proposition} \label{a} Let $\mathbb M$ be a left-exact ML $\R$-module and $f\colon \mathcal N^*\to \mathbb M$ a morphism of $\R$-modules. Then, $f$ factors through a submodule scheme of $\mathbb M$. Moreover, there exists the smallest submodule scheme of $\mathbb M$ containing  $\Ima f$.
\end{proposition}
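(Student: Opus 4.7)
The plan is to exhibit the smallest submodule scheme explicitly. For any submodule $N' \subseteq N$, let $\pi_{N'} \colon \mathcal N^* \to \mathcal N'^*$ denote the morphism corresponding via Yoneda to the inclusion $N' \hookrightarrow N$, and let $\mathcal S$ be the collection of $N' \subseteq N$ such that $f$ factors through $\pi_{N'}$; note $N \in \mathcal S$. The core claim is that $\mathcal S$ is closed under arbitrary intersections, so that $N_0 := \bigcap_{N' \in \mathcal S} N'$ is its smallest element, producing a factorization $f = g \circ \pi_{N_0}$ with $g \colon \mathcal N_0^* \to \mathbb M$.

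Closure under intersections is the main difficulty, and it is precisely there where the ML hypothesis together with left-exactness enters. Using the exact sequence $0 \to \mathbb M(N') \to \mathbb M(N) \to \mathbb M(N/N')$ together with Yoneda, the condition $N' \in \mathcal S$ is equivalent to $f_{N/N'}(q) = 0$, where $q \colon N \to N/N'$ denotes the quotient. For a family $\{N'_\alpha\}_\alpha \subseteq \mathcal S$ with $N' := \bigcap_\alpha N'_\alpha$, the canonical map $\iota \colon N/N' \hookrightarrow \prod_\alpha N/N'_\alpha$ is injective, and its composition $\psi := \iota \circ q$ projects onto each factor as the quotient $q_\alpha \colon N \to N/N'_\alpha$, for which $f_{N/N'_\alpha}(q_\alpha) = 0$ by hypothesis. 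The ML property for $\mathbb M$ then forces $f_{\prod_\alpha N/N'_\alpha}(\psi) = 0$, while the left-exactness of $\mathbb M$ applied to $\iota$ yields the injectivity of $\mathbb M(\iota)$, from which $f_{N/N'}(q) = 0$ follows.

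Next, we verify that the minimal factorization $g \colon \mathcal N_0^* \to \mathbb M$ is a monomorphism: otherwise, Lemma \ref{lemas} would produce a proper submodule $N'' \subsetneq N_0$ through which $g$, and hence $f = g \circ \pi_{N_0}$, factors, contradicting the minimality of $N_0$ in $\mathcal S$.

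Finally, for any submodule scheme $\iota_V \colon \mathcal V^* \hookrightarrow \mathbb M$ containing $\Ima f$, the induced morphism $h \colon \mathcal N^* \to \mathcal V^*$ corresponds via Yoneda to some $\alpha \colon V \to N$; factoring $\alpha = i \circ \beta$ through $N'' := \Ima \alpha$, we obtain $h = \beta^* \circ \pi_{N''}$ with $\beta^*$ a monomorphism by Lemma \ref{lemar}, so $\iota_V \circ \beta^*$ witnesses $N'' \in \mathcal S$ and $N_0 \subseteq N''$. Comparing the two resulting factorizations of $f$ through $\pi_{N''}$ via the injectivity of $\mathbb M(N'') \to \mathbb M(N)$ shows that the canonical morphism $j \colon \mathcal N^{''*} \to \mathcal N_0^*$, dual to the inclusion $N_0 \hookrightarrow N''$, satisfies $g \circ j = \iota_V \circ \beta^*$ and is therefore a monomorphism; this forces $N_0 = N''$, so that $\mathcal N_0^*$ embeds in $\mathcal V^*$ as claimed.
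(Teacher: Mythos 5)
Your proof is correct and follows essentially the same route as the paper: the set of submodules of $N$ through which $f$ factors is closed under intersection, proved by combining the ML property with left-exactness applied to the embedding $N/\bigcap_i N_i \hookrightarrow \prod_i N/N_i$, and then Lemma \ref{lemas} upgrades the minimal factorization to a monomorphism. Your final minimality step, carried out explicitly via the identification $\Hom_{\mathcal R}(\mathcal N^*,\mathcal V^*)=\Hom_R(V,N)$ and the image factorization of the corresponding map $V\to N$, is only a slightly more detailed variant of the paper's argument, which instead re-applies the first part of the proposition to the submodule scheme $\mathcal V^*$ itself.
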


\begin{proof} 
Let $\{N_i\}_{i\in I}$ be the set of submodules $N_i\subseteq N$ such that 
$f$ factors through the module scheme $\mathcal N_i^*$ associated with $N_i$. Consider the obvious exact sequence of morphisms
$$0\to \cap_{i\in I} N_i\to N\to \prod_{i\in I} N/N_i\,.$$
 Put $N'_i:=N/N_i$, for any $i\in I$, $N':=\prod_{i\in I} N'_i$ 
and $N'':=\cap_{i\in I} N_i$. Then, we have the exact sequence
$$0\to \mathbb M(N'')\to \mathbb M(N)\to \mathbb M(N').$$
Observe that $\mathbb M(N')=\mathbb M(\prod_{i\in I} N'_i)\subseteq \prod_{i\in I} \mathbb M(N'_i)$, then we 
have the exact sequence of morphisms of groups
$$0\to \mathbb M(N'')\to \mathbb M(N)\to \prod_{i\in I}\mathbb M(N'_i).$$
Therefore, we have the exact sequence of morphisms
$$0\to \Hom_{\RR}({\mathcal N''}^*,\mathbb M)\to \Hom_{\RR}(\mathcal N^*,\mathbb M)\to \prod_{i\in I} \Hom_{\RR}(\mathcal N'^*_i,\mathbb M).$$
Hence, $f$ factors through a morphism $g\colon {\mathcal N''}^*\to \mathbb M$, since $f\in 
\Hom_{\RR}(\mathcal N_i^*,\mathbb M)=\Ker[\Hom_{\RR}(\mathcal N^*,\mathbb M)\to  \Hom_{\RR}(\mathcal N'^*_i,\mathbb M)]$, for every $i$.
Obviously, $g$ does not factor through the module scheme associated with a 
proper submodule of $N''$. By Lemma \ref{lemas}, the morphism $g\colon \mathcal N''^*\hookrightarrow \mathbb M$ is a monomorphism. 
$ \mathcal N''^*$ is the smallest submodule scheme of  $\mathbb M$ containing $\Ima f$: If $\Ima f\subseteq \mathcal W^*\subseteq \mathbb M$, then the morphisms $\mathcal N^*\to \mathcal W^*$ again factors through 
$ \mathcal N''^*$, since $\mathcal W^*$ is a left-exact ML $\R$-module.

\end{proof}


\begin{theorem} \label{ML} Let $\mathbb M$ be an $\R$-module and assume there exists an epimorphism $\pi\colon \oplus_{l\in L}\mathcal W_l^*\to \mathbb M$. The following statements are equivalent

\begin{enumerate}
\item $\mathbb M$ is a left-exact ML module.

\item Every morphism of $\mathcal R$-modules $\mathcal N^*\to \mathbb M$ factors through an $\R$-submodule scheme of $\mathbb M$, for any right $R$-module $N$.

\item $\mathbb M$ is equal to a direct limit of  $\R$-submodule schemes.
\end{enumerate}
\end{theorem}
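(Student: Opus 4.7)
The plan is to prove the three implications $(1) \Rightarrow (2) \Rightarrow (3) \Rightarrow (1)$. The first, $(1) \Rightarrow (2)$, is nothing but Proposition \ref{a} applied to an arbitrary morphism $\mathcal N^* \to \mathbb M$, so no new work is required. For $(3) \Rightarrow (1)$, write $\mathbb M = \ilim{i} \mathcal V_i^*$ as a directed colimit of submodule schemes. Each $\mathcal V_i^*$ is left-exact by Proposition \ref{schpro} and is ML because $\mathcal V_i^*(N) = \Hom_R(V_i, N)$ sends products to products. Since filtered colimits in the category of abelian groups commute with finite limits, the colimit $\mathbb M$ inherits left-exactness; the ML property for $\mathbb M$ then follows from the last sentence of Proposition \ref{4.1}, since each $\mathcal V_i^*$ is an ML submodule of $\mathbb M$.

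The substantive implication is $(2) \Rightarrow (3)$. I would form the family $\mathcal F$ of all $\R$-submodule schemes $\mathcal V^* \hookrightarrow \mathbb M$, partially ordered by inclusion as subobjects of $\mathbb M$, and show that the canonical morphism $\varphi\colon \ilim{\mathcal V^* \in \mathcal F} \mathcal V^* \to \mathbb M$ is an isomorphism. Surjectivity of $\varphi_S$ on each $R$-module $S$ follows from the given epimorphism $\pi\colon \oplus_{l \in L} \mathcal W_l^* \to \mathbb M$: any $m \in \mathbb M(S)$ lifts to an element supported on some finite $j \subset L$, and by (2) the composite $\mathcal W_j^* := \oplus_{l \in j} \mathcal W_l^* \to \mathbb M$ factors through some $\mathcal V^* \in \mathcal F$, placing $m$ in the image of $\mathcal V^*(S) \to \mathbb M(S)$. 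Injectivity is automatic, since each $\mathcal V^* \hookrightarrow \mathbb M$ is a monomorphism and filtered colimits of monomorphisms of abelian groups are monomorphisms.

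The technical crux is verifying that $\mathcal F$ is genuinely upward directed, as this is the one place where hypothesis (2) enters nontrivially. Given $\mathcal V_1^*, \mathcal V_2^* \in \mathcal F$, I would exploit the identification $\mathcal V_1^* \oplus \mathcal V_2^* = (V_1 \oplus V_2)^*$ (so that finite biproducts of module schemes remain module schemes) to apply (2) to the sum morphism $(V_1 \oplus V_2)^* \to \mathbb M$, obtaining a submodule scheme $\mathcal W^* \in \mathcal F$ through which it factors; post-composing with the canonical injections then gives $\mathcal V_1^*, \mathcal V_2^* \subseteq \mathcal W^*$ inside $\mathbb M$. I expect this directedness check, together with the clean identification of the direct sum of module schemes as a module scheme, to be the main subtle point of the argument.
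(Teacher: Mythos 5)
Your proposal is correct and follows essentially the same route as the paper: $(1)\Rightarrow(2)$ is Proposition \ref{a}, $(3)\Rightarrow(1)$ is Proposition \ref{4.1} plus exactness of filtered colimits, and $(2)\Rightarrow(3)$ rests on directedness of submodule schemes via the morphism $\mathcal V_1^*\oplus\mathcal V_2^*=(V_1\oplus V_2)^*\to\mathbb M$, exactly as in the paper's ``$\mathcal W^*\oplus\mathcal W'^*\to\mathbb M$'' step. The one point your variant leaves implicit is that your index family $\mathcal F$ of \emph{all} submodule schemes of $\mathbb M$ must be a set before one can take its direct limit (the ambient category is not locally small); the paper sidesteps this by indexing over the finite subsets $j\subseteq L$ and taking the smallest submodule scheme containing $\pi(\oplus_{l\in j}\mathcal W_l^*)$. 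This is easily repaired in your setup: any submodule scheme $\mathcal V^*\subseteq\mathbb M$ has its inclusion, viewed via Yoneda as an element of $\mathbb M(V)$, lifting through some $\mathcal W_j^*$, and the resulting monomorphism $\mathcal V^*\hookrightarrow\mathcal W_j^*$ exhibits $V$ as a quotient of $W_j$ (Lemma \ref{epi}), so $\mathcal F$ is parametrized by a set.
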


\begin{proof} The implication $ (1) \Rightarrow (2)$  is precisely Proposition \ref{a}.

$ (2) \Rightarrow (3)$ Given a morphism $f\colon \mathcal N^*\to \mathbb M$, let $\mathcal W^*$ be a submodule scheme of $\mathbb M$ containing $\Ima f$. By Proposition \ref{a}, there exists the smallest submodule scheme
$\mathcal V^*$ of $\mathcal W^*$ containing $\Ima f$. If $\mathcal W'^*$ is another submodule scheme of $\mathbb M$ containing $\Ima f$, consider a submodule scheme $\mathcal W''^*$ of $\mathbb M$ containing $\mathcal W^*$ and $\mathcal W'^*$ (observe we have a natural morphism $\mathcal W^*\oplus \mathcal W'^*\to \mathbb M$). Hence, the smallest submodule scheme of $\mathcal W'^*$ containing $\Ima f$  is equal to 
the smallest submodule scheme of $\mathcal W''^*$ containing $\Ima f$, which is equal to $\mathcal V^*$. Therefore, $\mathcal V^*$ is the smallest submodule scheme of $\mathbb M$ containing $\Ima f$.

 Let $J$ be the set of all finite subsets of $L$, and
$\mathcal V^*_j$ the smallest submodule scheme of $\mathbb M$  containing $\pi(\oplus_{l\in j}\mathcal W_l^*)$, for any $j\in J$. Then, $\ilim{j\in J}\mathcal V_j^*=\mathbb M$.

$ (3) \Rightarrow (1)$ $\mathbb M$ is left-exact since it is a direct limit of left-exact functors. By Proposition \ref{4.1}, $\mathbb M$ is an ML $\R$-module.
\end{proof}

\begin{definition} An $\R$-module $\mathbb M$ is called countably generated if there exists an epimorphism $\oplus_{i\in\mathbb N} \mathcal N_i^*\to \mathbb M$.
\end{definition}

By Note \ref{2.12N}, $\mathcal M$ is countably generated iff $M$ is countably generated.

\begin{proposition} \label{CR7} An $\R$-module $\mathbb M$ is a left-exact ML $\R$-module of countable type if and only if there exists a chain of submodule schemes of $\mathbb M$
$$\mathcal N_1^*\subseteq \mathcal N_2^*\subseteq \cdots\subseteq \mathcal N_r^*\subseteq \cdots\subseteq \mathbb M$$
such that $\mathbb M=\cup_{r\in\mathbb N} \mathcal N_r^*$. 

\end{proposition}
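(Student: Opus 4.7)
The plan is to apply Theorem \ref{ML} together with the countability of the generating set. For the forward implication, assume $\mathbb M$ is a left-exact ML module of countable type, so there is an epimorphism $\pi\colon\oplus_{i\in\mathbb N}\mathcal W_i^*\to\mathbb M$. I would mimic the construction in the proof of $(2)\Rightarrow(3)$ of Theorem \ref{ML}: for every finite subset $j\subseteq\mathbb N$ let $\mathcal V_j^*\subseteq\mathbb M$ denote the smallest submodule scheme of $\mathbb M$ containing $\pi(\oplus_{l\in j}\mathcal W_l^*)$, which exists by Proposition \ref{a}. From that theorem we already know $\mathbb M=\ilim{j}\mathcal V_j^*$, so the only remaining issue is to replace this filtered (countable) system by an increasing sequence.

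The key observation is the following monotonicity: if $j\subseteq j'$ are finite subsets of $\mathbb N$, then $\mathcal V_{j'}^*$ contains $\pi(\oplus_{l\in j}\mathcal W_l^*)$ and hence, by the minimality property of $\mathcal V_j^*$, one has $\mathcal V_j^*\subseteq\mathcal V_{j'}^*$. Setting $\mathcal N_r^*:=\mathcal V_{\{1,\dots,r\}}^*$ produces a chain of submodule schemes of $\mathbb M$. Since every finite subset of $\mathbb N$ is contained in some $\{1,\dots,r\}$, the family $\{\mathcal N_r^*\}_{r\in\mathbb N}$ is cofinal in $\{\mathcal V_j^*\}$, so $\mathbb M=\ilim{j}\mathcal V_j^*=\cup_{r\in\mathbb N}\mathcal N_r^*$.

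For the converse, suppose $\mathbb M=\cup_{r\in\mathbb N}\mathcal N_r^*$ for some chain of submodule schemes. Then the natural morphism $\oplus_{r\in\mathbb N}\mathcal N_r^*\to\mathbb M$ is an epimorphism, so $\mathbb M$ is countably generated. Moreover, $\mathbb M$ is a (filtered) direct limit of submodule schemes, so Theorem \ref{ML} gives at once that $\mathbb M$ is a left-exact ML $\R$-module.

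I expect the only delicate point to be the minimality/monotonicity argument in the first paragraph, which relies crucially on Proposition \ref{a} (the existence of a smallest submodule scheme containing the image of a morphism). Once that is in place, everything else is a routine cofinality argument and an appeal to Theorem \ref{ML}; no further computation is needed.
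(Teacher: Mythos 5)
Your proof is correct and follows exactly the route the paper intends: the paper's own proof simply says the proposition is an immediate consequence of Theorem \ref{ML}, and what you have done is make explicit the cofinality of the initial segments $\{1,\dots,r\}$ among finite subsets of $\mathbb N$, together with the monotonicity of the smallest submodule schemes $\mathcal V_j^*$ from Proposition \ref{a}. The converse argument (the union gives an epimorphism from a countable direct sum, and Theorem \ref{ML} $(3)\Rightarrow(1)$ gives left-exact ML) is likewise exactly what the paper relies on.
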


\begin{proof} It is an immediate  consequence of Theorem \ref{ML}.

\end{proof}

\begin{corollary} Let  $\mathbb M$ be a countably generated $\R$-module.
$\mathbb M$ is a left-exact $\R$-module if and only if it is projective. 
\end{corollary}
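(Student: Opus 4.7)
The plan is to prove both implications, leaning on the structural results of Section 4.

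For the easy direction, \emph{projective implies left-exact}, I would argue as follows. Since $\mathbb M$ is countably generated, there is an epimorphism $\pi\colon \bigoplus_{i\in\mathbb N}\mathcal N_i^*\to \mathbb M$; projectivity of $\mathbb M$ makes $\pi$ split, so $\mathbb M$ is a direct summand of $\bigoplus_{i\in\mathbb N}\mathcal N_i^*$. By Proposition \ref{schpro} each $\mathcal N_i^*$ is a left-exact functor; since kernels commute with direct sums of abelian groups, a direct sum of additive left-exact functors is left-exact, and left-exactness passes to direct summands. Thus $\mathbb M$ is left-exact.

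For \emph{left-exact implies projective}, the plan is to first exhibit $\mathbb M$ as a countable increasing union of submodule schemes $\mathbb M=\bigcup_{n\in\mathbb N}\mathcal N_n^*$, in the style of Proposition \ref{CR7}; each inclusion $\mathcal N_n^*\hookrightarrow \mathcal N_{n+1}^*$ corresponds dually to an epimorphism $f_n\colon N_{n+1}\twoheadrightarrow N_n$. I would then invoke the standard telescope short exact sequence
$$0\to \bigoplus_{n\in\mathbb N} \mathcal N_n^*\xrightarrow{\,1-s\,} \bigoplus_{n\in\mathbb N}\mathcal N_n^*\to \mathbb M\to 0,$$
where $s$ is the shift induced by the inclusions; the middle and left terms are projective $\R$-modules by Proposition \ref{schpro}, so projectivity of $\mathbb M$ reduces to the vanishing, for every $\R$-module $\mathbb A$, of the cokernel of $(1-s)^{*}$ on $\prod_{n}\Hom_{\mathcal R}(\mathcal N_n^*,\mathbb A)=\prod_{n}\mathbb A(N_n)$ (using Yoneda). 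This cokernel is the derived limit $\lim^{1}_{n}\mathbb A(N_n)$ with transition maps $\mathbb A(f_n)$, whose vanishing follows from a Mittag-Leffler condition on the system $\{\mathbb A(N_n)\}$.

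The main obstacle, and the delicate heart of the argument, is twofold. First, one must produce the countable chain $\mathbb M=\bigcup_{n}\mathcal N_n^*$ from only the left-exactness and countable generation of $\mathbb M$: Theorem \ref{Lmodule} presents $\mathbb M$ as a directed colimit of module schemes, but the index set it produces need not be countable, and funneling it into a chain is exactly the ``ML-type'' step carried out in the proof of Proposition \ref{CR7}. Second, having obtained such a chain, one must verify the Mittag-Leffler property of $\{\mathbb A(N_n)\}$ for an arbitrary $\R$-module $\mathbb A$; this is immediate when $\mathbb A=\mathcal A$ is quasi-coherent, since $\mathcal A(f_n)=1\otimes f_n$ is surjective, and for general $\mathbb A$ I would reduce to this case via the canonical morphism $\mathbb A_{qc}\to \mathbb A$ together with left-exactness, or alternatively bypass the $\lim^{1}$ computation by building an explicit splitting of the telescope sequence from the chain structure, in the spirit of Kaplansky's treatment of countably generated projective modules.
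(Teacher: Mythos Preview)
Your overall architecture matches the paper's: for $\Leftarrow$ you split an epimorphism from $\bigoplus_{i}\mathcal N_i^*$ and inherit left-exactness (the paper also notes that the ML property is inherited here, via Proposition~\ref{4.1}, which is consistent with the introduction's phrasing ``left-exact ML''---you correctly sensed that the ML hypothesis is what makes Proposition~\ref{CR7} applicable). For $\Rightarrow$ you use Proposition~\ref{CR7} to get a countable chain $\mathcal N_0^*\subseteq \mathcal N_1^*\subseteq\cdots$ with $\mathbb M=\bigcup_n\mathcal N_n^*$ and then form the telescope sequence. This is exactly what the paper does.

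Where you diverge is in how to split the telescope. The paper takes your ``alternative'' route and simply writes down an explicit retract of the map
\[
f(n_0,n_1,n_2,\ldots)=(n_0,\,n_1-n_0,\,n_2-n_1,\ldots),
\]
namely $r(n_0,n_1,n_2,\ldots)=(n_0,\,n_0+n_1,\,n_0+n_1+n_2,\ldots)$; this is well defined precisely because the $\mathcal N_n^*$ form an increasing chain, and it needs nothing about the dual maps $N_{n+1}\to N_n$ being surjective. One line, done.

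By contrast, your primary route via $\lim^1$ has a genuine gap. To conclude projectivity from vanishing of the cokernel of $(1-s)^*$ you must treat an \emph{arbitrary} $\R$-module $\mathbb A$, and for such $\mathbb A$ the transition maps $\mathbb A(f_n)$ need not be surjective (take $\mathbb A=\mathcal W^*$ for a non-projective $W$), so Mittag--Leffler is not automatic. Your proposed reduction ``via the canonical morphism $\mathbb A_{qc}\to \mathbb A$ together with left-exactness'' does not go through: that morphism is neither injective nor surjective in general, and $\mathbb A$ itself is not assumed left-exact. In the end the only clean way to show $\lim^1=0$ for all $\mathbb A$ is to observe that the sequence already splits---which is exactly the explicit retract you relegated to a parenthetical. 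Promote that to the main argument and your proof coincides with the paper's.
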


\begin{proof} $\Leftarrow)$ Consider an epimorphism $\oplus_{i\in\mathbb N} \mathcal N_i^*\to \mathbb M$. $\mathbb M$ is a direct summand of $\oplus_{i\in\mathbb N} \mathcal N_i^*$. Hence, $\mathbb M$  is left-exact  and it is an ML $\R$-module
by Proposition \ref{4.1}.

$\Rightarrow)$ By Proposition \ref{CR7}, there exists a chain of submodule schemes of $\mathbb M$
$$\mathcal N_1^*\subseteq \mathcal N_2^*\subseteq \cdots\subseteq \mathcal N_r^*\subseteq \cdots\subseteq \mathbb M$$
such that $\mathbb M=\cup_{r\in\mathbb N} \mathcal N_r^*$. We have the exact sequence of morphisms
$$0\to \oplus_{i\in\mathbb N} \mathcal N_i^*\overset{f}\to \oplus_{i\in\mathbb N} \mathcal N_i^*\overset{g}\to \mathbb M\to 0, $$
where $f(n_0,n_1,n_2,\ldots):=(n_0,n_1-n_0,n_2-n_1,\ldots)$ and  $g((n_i))=\sum_i n_i$.
The morphism $r\colon \oplus_{i\in\mathbb N} \mathcal N_i^*\to \oplus_{i\in\mathbb N} \mathcal N_i^*$, $r(n_0,n_1,n_2,\ldots):=(n_0,n_0+n_1,n_0+n_1+n_2,\ldots)$ is a retract
of $f$. Therefore, $\mathbb M$ is a direct summand of $\oplus_{i\in\mathbb N} \mathcal N_i^*$ and it is projective.

\end{proof}

In particular,  if $M$ is a countably generated, it is a Mittag-Leffler module iff  it is projective.

\begin{theorem} Let $\mathbb M$ be an $\R$-module and assume there exists an epimorphism $\pi\colon \oplus_{i\in I}\mathcal N_i^*\to \mathbb M$. If $\mathbb M$
is projective then it is a direct sum of  (projective) countably generated $\R$-modules.
\end{theorem}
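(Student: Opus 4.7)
Since $\mathbb{M}$ is projective, the epimorphism $\pi$ splits, giving $\mathbb{F} := \bigoplus_{i\in I}\mathcal{N}_i^* = \mathbb{M}\oplus\mathbb{M}'$; let $p_\mathbb{M}, p_{\mathbb{M}'}$ denote the two projections. The plan is to adapt Kaplansky's classical argument on projective modules to the present setting, where the usual role of ``an element of a free module has finite support'' is played by Yoneda's Lemma. Specifically, for any module scheme $\mathcal{P}^*$, Yoneda gives
$$\Hom_{\R}(\mathcal{P}^*, \mathbb{F}) = \mathbb{F}(P) = \bigoplus_{i\in I}\Hom_R(N_i, P),$$
so every morphism $\mathcal{P}^*\to\mathbb{F}$ factors through a finite sub-sum; consequently any countably generated $\R$-submodule of $\mathbb{F}$ is contained in $\bigoplus_{i\in I_0}\mathcal{N}_i^*$ for some countable $I_0\subseteq I$.

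The main technical step is a closure lemma: for each $i\in I$ there exists a countable $J\subseteq I$ with $i\in J$ such that $\bigoplus_{j\in J}\mathcal{N}_j^*$ is stable under both $p_\mathbb{M}$ and $p_{\mathbb{M}'}$, and hence a direct summand of $\mathbb{F}$ compatible with the decomposition $\mathbb{M}\oplus\mathbb{M}'$. Starting from $J_0 = \{i\}$, the submodules $p_\mathbb{M}(\mathcal{N}_i^*)$ and $p_{\mathbb{M}'}(\mathcal{N}_i^*)$ are countably generated, hence by the previous paragraph lie in $\bigoplus_{j\in J_1}\mathcal{N}_j^*$ for some countable $J_1\supseteq J_0$; iterating countably many times and setting $J := \bigcup_n J_n$ produces the required index set. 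Moreover, whenever $I_\alpha\subseteq I$ is already projection-stable, $I_\alpha\cup J$ remains projection-stable by the same bookkeeping.

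Now well-order $I$ and construct by transfinite recursion a continuous ascending chain $\emptyset = I_0\subseteq I_1\subseteq\cdots\subseteq I_\alpha\subseteq\cdots$ of projection-stable subsets of $I$ with $|I_{\alpha+1}\setminus I_\alpha|\leq\aleph_0$, by applying the closure lemma at each successor step to the smallest index $i_\alpha\notin I_\alpha$. The chain exhausts $I$, so each $\mathbb{F}_\alpha := \bigoplus_{i\in I_\alpha}\mathcal{N}_i^*$ is a direct summand of $\mathbb{F}$ respecting the decomposition, and $\mathbb{P}_\alpha := \mathbb{F}_\alpha\cap\mathbb{M} = p_\mathbb{M}(\mathbb{F}_\alpha)$ is an ascending chain of direct summands of $\mathbb{M}$. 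Choosing complements $\mathbb{D}_\alpha$ of $\mathbb{P}_\alpha$ in $\mathbb{P}_{\alpha+1}$, each $\mathbb{D}_\alpha$ is countably generated (being a quotient of the countably generated module $p_\mathbb{M}(\bigoplus_{i\in I_{\alpha+1}\setminus I_\alpha}\mathcal{N}_i^*)$), and the chain structure yields $\mathbb{M} = \bigoplus_\alpha \mathbb{D}_\alpha$, with each $\mathbb{D}_\alpha$ projective as a direct summand of the projective $\mathbb{M}$. The main obstacle is the closure lemma; the Yoneda-based finite-support observation is precisely what keeps the back-and-forth countable at every stage and delivers the decomposition-compatibility of the limit.
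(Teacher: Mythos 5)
Your proposal is correct and follows essentially the same route as the paper: both adapt Kaplansky's transfinite argument, with the Yoneda identification $\Hom_{\mathcal R}(\mathcal P^*,\oplus_i\mathcal N_i^*)=\oplus_i\Hom_R(N_i,P)$ (the paper's Proposition \ref{L5.11}) supplying the ``finite support'' property that makes the countable closure step work. The construction of projection-stable countable subsums, the continuous well-ordered chain, and the decomposition of $\mathbb M$ into successive countably generated projective complements all match the paper's proof.
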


\begin{proof} Let us repeat the arguments given by Kaplansky in \cite{Kaplansky}.
$\mathbb M$ is a direct summand of $\mathbb N:=\oplus_{i\in I}\mathcal N_i^*$.
Put $\mathbb N=\mathbb M\oplus \mathbb M'$.
Let us inductively construct  a well ordered increasing sequence of $\R$-submodules $\{\mathbb N_\alpha\}$ of $\mathbb N$ such that

\begin{enumerate} 
\item If $\alpha$ is a limit ordinal, $\mathbb N_\alpha=\cup_{\beta<\alpha} \mathbb N_{\beta}$.

\item $\mathbb N_{\alpha+1}/\mathbb N_\alpha$ is countably generated.

\item Each $\mathbb N_\alpha$ is the direct sum of a subset of the $\mathcal N_i^*$'s.

\item $\mathbb N_\alpha=\mathbb M_\alpha\oplus \mathbb M'_\alpha$, where
$\mathbb M_\alpha = \mathbb N_\alpha\cap \mathbb M$ and $\mathbb M'_\alpha= \mathbb N_\alpha \cap \mathbb M'$.

\end{enumerate}
In this situation, $\mathbb M_\alpha$ is a direct summand of $\mathbb N$, since it is a direct summand of $\mathbb N_\alpha$, which is a direct summand of $\mathbb N$. Therefore, $\mathbb M_\alpha$ is a direct summand of $\mathbb M$. Besides, 
$$\mathbb N_{\alpha+1}/\mathbb N_{\alpha}=\mathbb M_{\alpha+1}/\mathbb M_{\alpha}\oplus \mathbb M'_{\alpha+1}/\mathbb M'_{\alpha}$$
Hence, $\mathbb M_{\alpha+1}/\mathbb M_{\alpha}$ is countably generated. Now it is clear that $\mathbb M=\cup_\alpha \mathbb M_\alpha$ and
$\mathbb M=\oplus_\alpha (\mathbb M_{\alpha+1}/\mathbb M_\alpha)$. 

Now, we proceed to contruct the $\mathbb N_\alpha$. Consider the obvious projections $\pi,\pi'\colon \mathbb N\to \mathbb M,\mathbb M'$.
Given $\mathbb N_{\alpha}$ let us construt $\mathbb N_{\alpha+1}$. Choose $\mathcal N_j^*\not\subseteq \mathbb N_\alpha$ and put $\mathcal N^*_{1}=\mathcal N_j^*$. By Proposition \ref{L5.11}, there exists $j_1,\ldots,j_r\in I$, such that
$\pi(\mathcal N^*_{1})\oplus \pi'(\mathcal N^*_1)\subseteq \mathcal N^*_{j_1}\oplus\cdots \oplus\mathcal N^*_{j_r}$. Put $\mathcal N^*_{2}:=\mathcal N^*_{j_1}$,$\ldots, \mathcal N^*_{r+1}:=\mathcal N^*_{j_r}$. Now we repeat on $\mathcal N^*_{2}$ the treatment just given to $\mathcal N^*_{1}$. The result will be a new finite set $\mathcal N^*_{r+2}, \ldots,  \mathcal N^*_{s}$. We proceed successively in this way. Finally,
$\mathbb N_{\alpha+1}$ is taken to be the $\R$-submodule generated by $\mathbb N_\alpha$ and all the $\mathcal N^*_{n}$'s. That $\mathbb N_{\alpha+1}$ has all the properties we desire is plain.

\end{proof}

\begin{observation} If $f\colon \mathcal N^*\to \mathcal M$ is a monomorphism, then
$N$ is a finitely generated module: The dual morphism $f^*\colon \mathcal M^*\to \mathcal N$ factors through the quasi-coherent module associated with a finitely generated submodule $N'\subseteq N$, by Note \ref{2.12N}. The morphism
$\mathcal N^*\to \mathcal N'^*$ is a monomorphism since  the composition
$\mathcal N^*\to \mathcal N'^*\to \mathcal M$ is a monomorphism. By Lemma \ref{epi}, the inclusion morphism $N'\subseteq N$ is an epimorphism, that is, $N=N'$.


\end{observation}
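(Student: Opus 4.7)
The plan is to dualize $f$, exploit that morphisms $\mathcal M^* \to \mathcal N$ are parameterized by the finite tensor $M\otimes_R N$ (and hence factor through a finitely generated submodule), and then dualize back to transport the finiteness to $N$.

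First, I would form the dual $f^* \colon \mathcal M^* \to (\mathcal N^*)^* \simeq \mathcal N$, using reflexivity (Theorem \ref{reflex}) to identify $(\mathcal N^*)^*$ with the original quasi-coherent module $\mathcal N$. By Theorem \ref{prop4}, $\Hom_{\mathcal R}(\mathcal M^*, \mathcal N) = M \otimes_R N$, so $f^*$ corresponds to a finite sum $\sum_{i=1}^n m_i \otimes n_i$. By Note \ref{2.12N} it therefore factors as $\mathcal M^* \to \mathcal N' \hookrightarrow \mathcal N$, where $N' := \langle n_1, \dots, n_n\rangle \subseteq N$ is finitely generated.

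Next, applying $(-)^*$ to this factorization and using reflexivity to identify $f^{**}$ with $f$, I would obtain a factorization $\mathcal N^* \to \mathcal N'^* \to \mathcal M^{**} \simeq \mathcal M$ of $f$ itself, in which the first arrow is the one induced by the inclusion $N' \hookrightarrow N$. Since $f$ is a monomorphism by hypothesis, the first arrow $\mathcal N^* \to \mathcal N'^*$ must also be a monomorphism.

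To finish, I would invoke the already-cited Lemma \ref{epi}, which asserts (by duality) that $\mathcal N^* \to \mathcal N'^*$ being a monomorphism is equivalent to the morphism $N' \to N$ being an epimorphism; since this morphism is the inclusion, we conclude $N = N'$, which is finitely generated by construction. The subtle point I expect to encounter is the reflexivity bookkeeping in the second step: one needs to verify carefully that dualizing the factorization $\mathcal M^* \to \mathcal N' \hookrightarrow \mathcal N$ genuinely reproduces $f$ under the canonical identifications $\mathcal M \simeq \mathcal M^{**}$ and $\mathcal N^* \simeq (\mathcal N^*)^{**}$, and that the intermediate morphism really is the one functorially associated with the inclusion $N' \subseteq N$. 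Once this naturality is in place, the argument collapses to the three steps above.
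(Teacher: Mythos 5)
Your proposal is correct and follows essentially the same route as the paper: dualize $f$, use Note \ref{2.12N} to factor $f^*$ through the quasi-coherent module of a finitely generated submodule $N'\subseteq N$, dualize back to see that $\mathcal N^*\to\mathcal N'^*$ is a monomorphism, and apply Lemma \ref{epi} to conclude $N=N'$. The reflexivity bookkeeping you flag is indeed the only point the paper leaves implicit, and it is covered by Theorem \ref{reflex} and the reflexivity of module schemes.
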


\section{Left-exact SML  $\mathcal R$-modules}

Recall the definition of SML $\R$-module (Definition \ref{DSM}). Module schemes $\mathcal M^*$ are obviously left-exact SML $\R$-modules.

\begin{proposition} \label{caca} Let $\,\mathbb M\,$ be an $\,\R$-module. The following statements are equivalent:

\begin{enumerate}
\item $\,\mathbb M\,$ is an SML $\R$-module.

\item The natural morphism $\,\mathbb M\to \mathbb M_{sch}\,$ is a monomorphism.

\item There exists a monomorphism $\,\mathbb M\hookrightarrow \mathcal V^*$.

\item There exists a monomorphism $\,\mathbb M\hookrightarrow \prod_{I}\mathcal R$.

\end{enumerate}

\end{proposition}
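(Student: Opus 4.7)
The plan is to establish the cycle $(1) \Leftrightarrow (2) \Rightarrow (3) \Rightarrow (4) \Rightarrow (1)$, leaning on the reflexivity theorem (which makes every module scheme automatically SML) and on the fact that the SML property is inherited by $\R$-submodules.

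First, $(1) \Leftrightarrow (2)$ should be essentially tautological. By the construction given in Proposition \ref{1211}, the natural morphism $\mathbb M \to \mathbb M_{sch}$ evaluated at an $R$-module $S$ is precisely the map $m \mapsto \tilde m$ appearing in Definition \ref{DSM}, taking values in $\Hom_R(\mathbb M^*(R), S) = \mathbb M_{sch}(S)$. Hence injectivity of this map for every $S$ says exactly that $\mathbb M \to \mathbb M_{sch}$ is a monomorphism.

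For $(2) \Rightarrow (3)$ I would simply take $\mathcal V^* = \mathbb M_{sch} = ((\mathbb M^*)_{qc})^*$, which is a module scheme by construction. For $(3) \Rightarrow (4)$, given $\mathbb M \hookrightarrow \mathcal V^*$, pick an epimorphism of right $R$-modules $\bigoplus_I R \twoheadrightarrow V$; left-exactness of $\Hom_R(-,N)$ in the first argument produces a monomorphism $\mathcal V^* \hookrightarrow (\bigoplus_I R)^*$, and the direct computation $(\bigoplus_I R)^*(N) = \Hom_R(\bigoplus_I R, N) = \prod_I N = \prod_I \mathcal R(N)$ identifies the target with $\prod_I \mathcal R$. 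Composing with the given mono $\mathbb M \hookrightarrow \mathcal V^*$ yields (4).

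Finally, $(4) \Rightarrow (1)$ rests on two auxiliary facts. First, every module scheme $\mathcal V^*$ is itself SML: by reflexivity (Theorem \ref{reflex}), $(\mathcal V^*)^*(R) = \mathcal V^{**}(R) = V$, and unfolding this identification shows that the SML map $\mathcal V^*(N) \to \Hom_R(V, N)$ coincides, under $\mathcal V^*(N) = \Hom_R(V,N)$, with the identity. Second, if $i \colon \mathbb M \hookrightarrow \mathbb N$ is a monomorphism of $\R$-modules and $\mathbb N$ is SML, naturality of $m \mapsto \tilde m$ supplies a commutative square whose left-hand arrow $i_N$ and bottom arrow (the SML map for $\mathbb N$) are both injective, forcing the top arrow (the SML map for $\mathbb M$) to be injective as well. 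Since $\prod_I \mathcal R = (\bigoplus_I R)^*$ is itself a module scheme, condition (4) embeds $\mathbb M$ into an SML $\R$-module and hence makes $\mathbb M$ SML. The main subtlety I expect is the first auxiliary fact, namely carefully tracing the reflexivity identification of Theorem \ref{reflex} to confirm that the SML map for a module scheme is literally the identity; the rest is formal.
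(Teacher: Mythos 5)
Your proof is correct, and the cycle $(1)\Leftrightarrow(2)\Rightarrow(3)\Rightarrow(4)$ is argued exactly as in the paper: $(1)\Leftrightarrow(2)$ is the tautology from the construction of $\mathbb M\to\mathbb M_{sch}$, $(2)\Rightarrow(3)$ takes $\mathcal V^*=\mathbb M_{sch}$, and $(3)\Rightarrow(4)$ dualizes an epimorphism $\oplus_I R\to V$. The only divergence is the closing implication: the paper proves $(4)\Rightarrow(2)$ by invoking Proposition \ref{1211}, so that the monomorphism $\mathbb M\hookrightarrow\prod_I\mathcal R$ factors through $\mathbb M\to\mathbb M_{sch}\to\prod_I\mathcal R$ and the first arrow is forced to be a monomorphism; you instead prove $(4)\Rightarrow(1)$ directly, by checking that module schemes are SML (which the paper asserts without proof at the top of the section, and which your reflexivity computation substantiates) and that the SML property passes to $\R$-submodules via the naturality square for $m\mapsto\tilde m$. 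Your heredity argument is sound --- the composite $\mathbb M(N)\to\mathbb N(N)\to\Hom_R(\mathbb N^*(R),N)$ is injective and equals $\mathbb M(N)\to\Hom_R(\mathbb M^*(R),N)\to\Hom_R(\mathbb N^*(R),N)$ --- and in effect you are reproving Corollary \ref{C5.3} from scratch rather than deriving it from the proposition, so there is no circularity. The paper's route is slightly slicker since Proposition \ref{1211} does all the work in one line; yours makes explicit why $\prod_I\mathcal R$ is SML, which the paper leaves implicit.
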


\begin{proof} $(1)\iff (2)$ It is an immediate consequence of the definition of $\mathbb M_{sch}$.

$(2) \iff (3)$ It is obvious.

$(3) \Rightarrow (4)$ Consider an epimorphism $\oplus_{I} \mathcal R\to \mathcal V$. Taking dual $\mathcal R$-modules, we have a monomorphism 
$\mathcal V^*\hookrightarrow \prod_{I}\mathcal R$.  The composition of the monomorphisms $\mathbb M\hookrightarrow \mathcal V^*\hookrightarrow \prod_{I}\mathcal R$ is a monomorphism.

$(4) \Rightarrow (2)$ By Proposition \ref{1211}, the monomorphism $\mathbb M\hookrightarrow \prod_{I}\mathcal R$ factors through a morphism $\mathbb M\to \mathbb M_{sch}$, that has to be a monomorphism.

\end{proof}

\begin{corollary}\cite[5.3]{Garfinkel}
If $\,M\,$ is a flat strict Mittag-Leffler module, then $\,M\,$ is a pure submodule of an $\,R$-module $\,\prod_I R$.
\end{corollary}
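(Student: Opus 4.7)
The plan is to deduce the corollary directly from Proposition \ref{caca} by evaluating the monomorphism at an arbitrary $R$-module and exploiting the natural factorization through the tensor product.

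First, I would observe that, since $M$ is a flat strict Mittag-Leffler module, the paper has already recalled (citing \cite{Garfinkel}, Thm. 3.2) that the associated quasi-coherent $\R$-module $\mathcal M$ is an SML $\R$-module. Applying Proposition \ref{caca}, there exists a set $I$ and a monomorphism of $\R$-modules
$$\iota\colon \mathcal M \hookrightarrow \prod_{I}\mathcal R.$$
Evaluating at the $R$-module $R$ yields $\iota_R\colon M=\mathcal M(R)\hookrightarrow (\prod_{I}\mathcal R)(R)=\prod_I R$, so in particular $M$ sits inside $\prod_I R$ as an $R$-submodule.

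Next, I would show purity. Let $N$ be an arbitrary (left) $R$-module. Since $\iota$ is a monomorphism in the category of $\R$-modules and kernels are computed pointwise, evaluation at $N$ gives an injection
$$\iota_N\colon M\otimes_R N=\mathcal M(N)\hookrightarrow (\prod_{I}\mathcal R)(N)=\prod_I (R\otimes_R N)=\prod_I N.$$
On the other hand, the morphism $\iota_R\otimes \Id_N\colon M\otimes_R N\to (\prod_I R)\otimes_R N$ is the morphism induced on tensor products, and there is the canonical morphism $\varphi_N\colon (\prod_I R)\otimes_R N\to \prod_I N$ that distributes a tensor over a product. By functoriality of $\iota$ together with the commutativity diagrams characterizing morphisms out of quasi-coherent modules (as in Proposition \ref{tercer}), $\iota_N$ factors as
$$M\otimes_R N\xrightarrow{\iota_R\otimes \Id_N}(\prod_I R)\otimes_R N\xrightarrow{\varphi_N}\prod_I N.$$
Since the composite is injective, the first arrow $\iota_R\otimes \Id_N$ is injective as well. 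As $N$ was arbitrary, this means precisely that $M\hookrightarrow \prod_I R$ is a pure submodule.

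The only delicate point is verifying the factorization of $\iota_N$ through $\iota_R\otimes \Id_N$; this is routine but must be spelled out, and it follows from the fact that any morphism out of a quasi-coherent module is determined by its value on $R$ (Proposition \ref{tercer}), so $\iota_N$ necessarily coincides with the composite displayed above. Once this identification is made, purity is immediate and no further computation is needed.
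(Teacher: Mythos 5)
Your proof is correct and follows essentially the same route as the paper: obtain the monomorphism $\mathcal M\hookrightarrow \prod_I\mathcal R$ from Proposition \ref{caca}, evaluate it on an arbitrary module, and note that the resulting injection $M\otimes_R N\to \prod_I N$ factors through $(\prod_I R)\otimes_R N$, forcing $\iota_R\otimes \Id_N$ to be injective. The only difference is that you spell out, via Proposition \ref{tercer}, the factorization that the paper's two-line proof takes for granted; that justification is valid.
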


\begin{proof} There exists a monomorphism $\,\mathcal M\hookrightarrow \prod_{I}\mathcal R$.
For any right $R$-module $S$,  the morphism $S\otimes_RM\to S\otimes_R (\prod_I R)$ is injective, since the composition
$S\otimes_RM\to S\otimes_R (\prod_I R)\to \prod_IS$ is injective.

\end{proof}

\begin{corollary} \label{C5.3} Any $\R$-submodule of an SML $\R$-module is an SML $\R$-module.
\end{corollary}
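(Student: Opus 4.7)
The plan is to deduce this immediately from Proposition \ref{caca}, which gives several equivalent characterizations of SML $\R$-modules. Specifically, I will use the equivalence between being SML and admitting a monomorphism into a product of copies of $\mathcal R$ (or, equivalently, into a module scheme $\mathcal V^*$).

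First, suppose $\mathbb N \hookrightarrow \mathbb M$ is a submodule of an SML $\R$-module $\mathbb M$. By Proposition \ref{caca} (applied in the direction $(1) \Rightarrow (4)$), there exists a monomorphism $\iota \colon \mathbb M \hookrightarrow \prod_I \mathcal R$. I would then consider the composition
$$\mathbb N \hookrightarrow \mathbb M \overset{\iota}\hookrightarrow \prod_I \mathcal R,$$
which is a monomorphism of $\R$-modules, since the composition of monomorphisms (checked objectwise: each $f_N$ is injective) is again a monomorphism. Finally, applying Proposition \ref{caca} in the direction $(4) \Rightarrow (1)$ to this monomorphism produces the conclusion that $\mathbb N$ is an SML $\R$-module.

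There is essentially no obstacle here: the whole argument is just composing inclusions, and the content is packaged entirely inside Proposition \ref{caca}. The only thing worth noting is that one must verify that ``monomorphism of $\R$-modules'' composes as expected, which follows from the fact that kernels in the category of $\R$-modules are computed pointwise, so a morphism $f$ is a monomorphism iff every $f_N$ is injective as a map of abelian groups, and injective maps of abelian groups compose to injective maps.
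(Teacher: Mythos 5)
Your proof is correct and is exactly the argument the paper intends: the corollary is stated without proof precisely because it follows immediately from the characterization of SML modules as those admitting a monomorphism into $\prod_I\mathcal R$ (Proposition \ref{caca}), combined with the fact that monomorphisms of $\R$-modules are detected objectwise and hence compose. Nothing is missing.
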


\begin{proposition} \label{prdsml} If $\mathbb M_i$ is a left-exact SML $\R$-module, for any $i\in I$, then $\prod_{i\in I} \mathbb M_i$  is a left-exact SML $\R$-module.

\end{proposition}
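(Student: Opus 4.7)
The plan is to verify the two properties (left-exactness and SML) separately, both by reducing to statements about products in the ambient category of functors.

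For left-exactness, I would simply observe that a product of left-exact functors is left-exact, since kernels commute with products in the category of abelian groups. Concretely, given an exact sequence $0\to N\to N'\to N''$ of $R$-modules, each sequence $0\to \mathbb M_i(N)\to \mathbb M_i(N')\to \mathbb M_i(N'')$ is exact, and taking products preserves exactness (the product functor on abelian groups is exact, not merely left-exact). Since $(\prod_i \mathbb M_i)(N)=\prod_i \mathbb M_i(N)$ by the definition of product in the category of $\R$-modules (values computed pointwise), left-exactness follows.

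For the SML property, I would invoke Proposition \ref{caca}: for each $i\in I$ there exists a monomorphism $\mathbb M_i\hookrightarrow \prod_{J_i}\mathcal R$. Taking the product of these monomorphisms over $i\in I$ yields a morphism
\[
\prod_{i\in I}\mathbb M_i\longrightarrow \prod_{i\in I}\bigl(\prod_{J_i}\mathcal R\bigr)\cong \prod_{\sqcup_{i\in I} J_i}\mathcal R,
\]
and this morphism is a monomorphism because products of monomorphisms of abelian groups are monomorphisms (evaluating at any $R$-module $N$ and using the pointwise formula for products). Applying Proposition \ref{caca} in the reverse direction, $\prod_{i\in I}\mathbb M_i$ is an SML $\R$-module.

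I don't anticipate any real obstacle: both parts reduce immediately to the exactness of the product functor on abelian groups combined with the characterization already established in Proposition \ref{caca}. If anything, the only small point worth being careful about is to note explicitly that products of $\R$-modules are computed pointwise, so that exactness at each $R$-module $N$ lifts to the exactness of the functorial statement. Corollary \ref{C5.3} is not needed here, but it is consistent with the result, since the product embeds into a larger product of copies of $\mathcal R$.
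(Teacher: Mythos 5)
Your proof is correct and follows essentially the same route as the paper: left-exactness is immediate from the pointwise computation of products, and the SML property is obtained by embedding the product into a product of the ambient objects furnished by Proposition \ref{caca} (the paper uses condition (3), embedding $\prod_i\mathbb M_i$ into $\prod_i\mathcal V_i^*=\mathcal V^*$ with $V=\oplus_i V_i$, while you use the equivalent condition (4) with copies of $\mathcal R$; the two are interchangeable since $\prod_I\mathcal R=(\oplus_I R)^*$).
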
 

\begin{proof} Obviously,  $\prod_{i\in I} \mathbb M_i$  is a left-exact functor.
Put $\mathbb M_i\subseteq \mathcal V_i^*$. Then, $\prod_{i\in I} \mathbb M_i\subseteq \prod_i \mathcal V_i^*=\mathcal V^*$, where $V=\oplus  V_i$.
By Proposition \ref{caca}, $\prod_{i\in I} \mathbb M_i$  is an SML $\R$-module

\end{proof}

\begin{lemma} \label{epi} If an $\RR$-module morphism   $\mathcal N^*\to \mathcal M^*$ is a monomorphism, then the dual morphism $\mathcal M\to \mathcal N$ is an epimorphism. 

\end{lemma}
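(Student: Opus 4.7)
The plan is to translate the statement into the language of quasi-coherent modules and then exploit the left-exactness of $\Hom_{\RR}(-,\mathcal S)$ together with a Yoneda-style trick to kill a cokernel. The argument is very short once set up correctly; the one piece of bookkeeping is to identify what ``the dual morphism'' should mean.

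First, by Proposition \ref{trivial} combined with reflexivity (Theorem \ref{reflex}) I have
$$\Hom_{\RR}(\mathcal N^*,\mathcal M^*)=\Hom_{\RR}(\mathcal M,\mathcal N^{**})=\Hom_{\RR}(\mathcal M,\mathcal N),$$
so the given monomorphism $f\colon\mathcal N^*\to\mathcal M^*$ corresponds canonically to a morphism $g\colon\mathcal M\to\mathcal N$; this $g$ is the ``dual morphism'' of the statement, and I need to prove it is an epimorphism. I then let $\mathcal P:=\Coker g$, which is quasi-coherent (it is the quasi-coherent module associated with $P:=\Coker g_R$, as noted right after Corollary \ref{2.4}), so I obtain the exact sequence
$$\mathcal M\overset{g}{\longrightarrow}\mathcal N\longrightarrow\mathcal P\longrightarrow 0.$$

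Next I would apply the left-exact contravariant functor $\Hom_{\RR}(-,\mathcal S)$ for an arbitrary $R$-module $S$ to obtain the exact sequence of abelian groups
$$0\to\mathcal P^*(S)\to\mathcal N^*(S)\overset{f_S}{\longrightarrow}\mathcal M^*(S).$$
Since $f$ is a monomorphism, $f_S$ is injective for every $S$, so $\mathcal P^*(S)=0$ for every $R$-module $S$. Specialising to $S=P$ gives $\Hom_R(P,P)=\mathcal P^*(P)=0$, which forces $\Id_P=0$ and therefore $P=0$. Hence $\mathcal P=0$, and $g$ is an epimorphism, as required.

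The essentially only step that is not automatic is the first one: one must verify that the $g$ produced by Proposition \ref{trivial} is actually the morphism obtained by dualising $f$ under the reflexivity identifications $\mathcal M=\mathcal M^{**}$ and $\mathcal N=\mathcal N^{**}$. Once this is confirmed, the rest of the argument is a one-line application of the left-exactness of $\Hom$ followed by the standard trick of detecting the vanishing of a module by plugging itself into its own dual.
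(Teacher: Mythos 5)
Your proof is correct and follows essentially the same route as the paper: both arguments rest on the anti-equivalence between module schemes and quasi-coherent modules (your identification $\Hom_{\RR}(\mathcal N^*,\mathcal M^*)=\Hom_{\RR}(\mathcal M,\mathcal N)$), under which the monomorphism of module schemes becomes an epimorphism of quasi-coherent modules. The paper simply cites this anti-equivalence in one line, whereas you unwind it explicitly via the quasi-coherent cokernel $\mathcal P$ and the test $\Hom_R(P,P)=0$; the bookkeeping step you flag ($g^{*}=f$ under the canonical identifications) does hold, since both are precomposition with $g_R$ viewed as a map $\Hom_R(N,-)\to\Hom_R(M,-)$.
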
 

 \begin{proof} The category of modules schemes is anti-equivalent to the category of quasi-coherent modules.  Hence, the morphism $\mathcal M\to \mathcal N$ is an epimorphism in the category of quasi-coherent modules, therefore it is an epimorphism. 
 \end{proof}
 
\begin{proposition} \label{Ppsml} Let $\mathbb M$ be an SML $\R$-module. If
$i\colon \mathcal N^*\to \mathbb M$ is a monomorphism, then $i^*\colon \mathbb M^*\to \mathcal N$ is an epimorphism.

\end{proposition}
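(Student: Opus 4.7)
The plan is to exploit the SML hypothesis in the form given by Proposition \ref{caca}(2): the natural morphism $\mathbb M\to \mathbb M_{sch}$ is a monomorphism. Composing with the given monomorphism $i\colon \mathcal N^*\hookrightarrow \mathbb M$, we obtain a monomorphism $\mathcal N^*\hookrightarrow \mathbb M_{sch}$ \emph{between module schemes}, which is precisely the setting in which the already-proved Lemma \ref{epi} applies.

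Concretely, I would first note that $\mathbb M_{sch}=(\mathbb M^*_{qc})^*$ is a module scheme by construction, so Lemma \ref{epi} yields that the dual morphism
\[
(\mathbb M_{sch})^{*}\longrightarrow (\mathcal N^*)^{*}
\]
is an epimorphism. By the reflexivity of quasi-coherent modules (Theorem \ref{reflex}) the target is $\mathcal N$, while the source is
\[
(\mathbb M_{sch})^{*}=((\mathbb M^*_{qc})^*)^{*}=\mathbb M^*_{qc},
\]
again by the same reflexivity statement applied to the quasi-coherent $\mathbb M^*_{qc}$. So we have obtained that the composite morphism $\mathbb M^*_{qc}\to \mathcal N$ is an epimorphism.

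The final step is purely formal: dualizing the composition $\mathcal N^*\hookrightarrow \mathbb M\hookrightarrow \mathbb M_{sch}$ gives a factorization
\[
(\mathbb M_{sch})^{*}\xrightarrow{\;j^{*}\;}\mathbb M^{*}\xrightarrow{\;i^{*}\;}\mathcal N,
\]
and since we have just shown that the composite $i^{*}\circ j^{*}$ is an epimorphism, $i^{*}$ is automatically an epimorphism (if $g\circ f$ is epi, so is $g$). There is essentially no obstacle to overcome: the work has been done once we know how to embed $\mathbb M$ into a module scheme (SML hypothesis) and once we translate monomorphisms to epimorphisms via Lemma \ref{epi}; the rest is the standard reflexivity bookkeeping $\mathcal N^{**}\simeq\mathcal N$ and the fact that the second factor of an epic composition is epic.
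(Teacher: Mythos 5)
Your argument is correct and is essentially the paper's own proof: the paper embeds $\mathbb M$ into the module scheme $\prod_{I}\mathcal R$ using Proposition \ref{caca}(4), whereas you embed it into $\mathbb M_{sch}$ using Proposition \ref{caca}(2), but in both cases the point is to compose with $i$ to get a monomorphism from $\mathcal N^*$ into a module scheme, apply Lemma \ref{epi} to its dual, and conclude because the second factor of an epimorphic composite is an epimorphism. Your additional reflexivity bookkeeping identifying $(\mathbb M_{sch})^*$ with $\mathbb M^*_{qc}$ and $(\mathcal N^*)^*$ with $\mathcal N$ via Theorem \ref{reflex} is correct and harmless.
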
 

\begin{proof} By Proposition \ref{caca}, there exists a monomorphism  $\mathbb M\hookrightarrow \prod_I\mathcal R$. Consider the monomorphisms
$\mathcal N^*\overset{i}\hookrightarrow  \mathcal M \hookrightarrow  \prod_I\mathcal R$.
Dually, the composite morphism
$$\oplus_I R\to \mathbb M^*\overset{i^*}\to \mathcal N$$
is an epimorphism, by Lemma \ref{epi}. Hence, $i^*$ is an epimorphism.

\end{proof}

\begin{theorem} \label{blabla} An $\R$-module $\mathbb M\,$  is a left-exact SML  $\R$-module iff $\,\mathbb M\,$ is a direct limit of submodule schemes, $\,\mathcal N_i^*\subseteq\mathbb M\,$ and the dual morphism
$\,\mathbb M^*\to \mathcal N_i\,$ is an epimorphism, for any $\,i$.
In particular,  an $\R$-module $\mathbb M$ is a left-exact SML $\R$-module  iff it is a left-exact ML $\R$-module and for any submodule scheme $\mathcal N^*\subseteq \mathbb M$ the dual morphism
$\,\mathbb M^*\to \mathcal N\,$ is an epimorphism.
\end{theorem}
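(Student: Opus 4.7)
My plan is to prove the main equivalence in the two obvious directions, and then deduce the ``in particular'' assertion as a formal corollary.

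For $(\Rightarrow)$, I first observe that every left-exact SML $\R$-module is automatically ML. Indeed, by Proposition \ref{caca} there is a monomorphism $\mathbb M\hookrightarrow \prod_I\mathcal R$; since $\mathcal R$ is a module scheme it is ML, and Proposition \ref{4.1} states that both products and submodules of ML modules are ML, so $\mathbb M$ is ML. Theorem \ref{ML} then presents $\mathbb M$ as a direct limit of submodule schemes $\mathcal N_i^*\subseteq \mathbb M$, and Proposition \ref{Ppsml} supplies, for each inclusion $\mathcal N_i^*\hookrightarrow \mathbb M$, the required fact that the dual map $\mathbb M^*\to \mathcal N_i$ is an epimorphism.

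For $(\Leftarrow)$, assume $\mathbb M=\ilim{i}\mathcal N_i^*$ with each $\mathbb M^*\to \mathcal N_i$ an epimorphism. Left-exactness of $\mathbb M$ is immediate from its being a direct limit of left-exact functors (Proposition \ref{schpro}), and the ML property follows from Theorem \ref{ML}. To verify SML, it suffices by Proposition \ref{caca} to show that $\mathbb M\to \mathbb M_{sch}$ is a monomorphism. Pick $m\in \mathbb M(S)$ with $\tilde m=0$, so that $w_S(m)=0$ for every $w\in \mathbb M^*(R)$. Represent $m$ by some $m_i\in \mathcal N_i^*(S)=\Hom_R(N_i,S)$; the restriction of $w\colon \mathbb M\to \mathcal R$ to $\mathcal N_i^*$ corresponds, via Theorem \ref{prop4}, to an element $w_i\in \mathcal R(N_i)=N_i$, and the explicit description in Note \ref{2.12N} identifies $w_S(m)$ with $m_i(w_i)$. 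By hypothesis the evaluation $\mathbb M^*(R)\to N_i$ is surjective, so as $w$ varies over $\mathbb M^*(R)$ the element $w_i$ ranges over all of $N_i$, forcing $m_i=0$ and hence $m=0$.

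The ``in particular'' statement is then formal. Its forward direction reuses what was shown above: a left-exact SML module is left-exact ML, and Proposition \ref{Ppsml} furnishes the epimorphism condition for every submodule scheme. Conversely, a left-exact ML $\R$-module is written by Theorem \ref{ML} as a direct limit of submodule schemes, the epimorphism hypothesis on every submodule scheme specializes to this family, and the first equivalence delivers SML. The main obstacle I anticipate is pinning down the Yoneda-type identification $w_S(m)=m_i(w_i)$ cleanly, i.e., tracking that the map $\mathbb M^*(R)\to \mathcal N_i(R)=N_i$ induced by the inclusion $\mathcal N_i^*\subseteq \mathbb M$ agrees under Theorem \ref{prop4} with the recipe ``restrict $w$ to $\mathcal N_i^*$ and read off the associated element of $N_i$'', and that evaluation at that element is exactly how $w_S$ acts on $\mathcal N_i^*(S)\subseteq \mathbb M(S)$; once this bookkeeping is carried out, the surjectivity hypothesis translates verbatim into the desired vanishing.
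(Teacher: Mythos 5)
Your proof is correct and follows essentially the same route as the paper: the forward direction via Proposition \ref{caca}, Proposition \ref{4.1}, the direct-limit presentation by submodule schemes, and Proposition \ref{Ppsml}; the backward direction by using surjectivity of $\mathbb M^*(R)\to N_i$ to deduce injectivity of $\mathbb M(S)\to \Hom_R(\mathbb M^*(R),S)$ (the paper phrases this as injectivity of $\ilim{}\Hom_R(N_i,N)\to \Hom_R(\mathbb M^*(R),N)$ rather than element-wise, but it is the same computation, and your Yoneda bookkeeping $w_S(m)=m_i(w_i)$ is exactly the identification needed). The only cosmetic difference is that the paper rederives the direct-limit presentation from Proposition \ref{a} directly (noting the submodule schemes of $\mathbb M\subseteq\mathcal V^*$ form a directed set) instead of citing Theorem \ref{ML}, which is legitimate under the paper's standing assumption that $\mathbb M$ admits an epimorphism from a direct sum of module schemes.
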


\begin{proof} $\Rightarrow)$ By Proposition \ref{caca}, there exists a monomorphism
$\mathbb M\hookrightarrow \mathcal V^*$. By Proposition \ref{4.1}, $\mathbb M$ is an ML $\R$-module since $\mathcal V^*$ is an ML $\R$-module.  By Proposition \ref{a},
any morphism $\mathcal N^*\to \mathbb M$ factors through a submodule scheme of $\mathbb M$. Recall $\mathbb M(N)=\Hom_{\RR}(\mathcal N^*,\mathbb M)$, then
given, $m\in \mathbb M(N)$, there exists a morphism $f\colon \mathcal N^*\to \mathbb M$  
such that $f_N(Id_N)=m$ (where $Id_N\in \mathcal N^*(N)$ is the identity morphism). Hence, there exists a submodule scheme $\mathcal N_i^*$ of $\mathbb M$
such that $m\in \mathcal N_i^*(N)$. The family of submodule schemes of $\mathcal V^*$ is a set because the family of quotient modules of $V$ is a set. Then, 
the family of submodule schemes of $\mathbb M$ is a set and $\mathbb M$ is equal to the direct limit of its submodule schemes.

$\Leftarrow)$ Put $\mathbb M=\lim \limits_{\rightarrow} \mathcal N_i^*$, where $\mathcal N_i^*\subseteq \mathbb M$ and the dual morphism $\mathbb M^*\to \mathcal N_i$ is an epimorphism, for any $i\in I$. $\mathbb M$ is a left-exact $\R$-module since it is a direct limit of left-exact $\R$-modules.
Observe that the morphism $\mathbb M^*(R)\to N_i$ is surjective, for any $i$, then the morphism $\Hom_R(N_i,N)\to \Hom_R(\mathbb M^*(R),N)$ is injective and  
the morphism $\lim\limits_{\rightarrow} \Hom_R(N_i,N)\to \Hom_R(\mathbb M^*(R),N)$ is injective.
Then, the  composition

$$\aligned \mathbb M(N) & =\lim \limits_{\rightarrow} \mathcal N_i^*(N)=
\lim_{\rightarrow}\Hom_{R}(N_i,  N)\hookrightarrow
\Hom_{R}(\mathbb M^*(R),  N)\endaligned $$
is injective. Hence, $\mathbb M$ is an SML $\R$-module. 
\end{proof}

\begin{theorem} \label{TTTB} Let $\,\mathbb M\,$ be a reflexive $\,\R$-module. Then, $\mathbb M$ is a left-exact SML module iff 
every morphism $\,f\colon \mathbb M^*\to \mathcal N\,$ factors through the 
quasi-coherent module associated with $\Ima f_R$, for any right $\,R$-module $\,N$.
\end{theorem}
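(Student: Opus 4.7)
The plan rests on the natural identification
$$\Hom_{\R}(\mathcal N^*,\mathbb M)=\Hom_{\R}(\mathbb M^*,\mathcal N),$$
which holds because $\mathbb M$ (by hypothesis) and $\mathcal N$ (by Theorem \ref{reflex}) are reflexive, so Proposition \ref{trivial} applies with $\mathbb M^{**}=\mathbb M$ and $\mathcal N^{**}=\mathcal N$. Under this identification any morphism $f\colon \mathbb M^*\to \mathcal N$ corresponds to a dual $g=f^*\colon \mathcal N^*\to \mathbb M$, and the factorization property of $f$ translates into a factorization property of $g$ through a submodule scheme of $\mathbb M$. Both directions of the theorem will be reduced to the characterizations of left-exact ML and left-exact SML already established in Theorems \ref{ML} and \ref{blabla}.

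For the direction ($\Rightarrow$), given $f\colon \mathbb M^*\to \mathcal N$ I would first pass to $g=f^*$; because $\mathbb M$ is left-exact ML, Theorem \ref{ML} lets me factor $g=j\circ g'$ through a submodule scheme $j\colon \mathcal W^*\hookrightarrow \mathbb M$. Since $\mathbb M$ is SML, Theorem \ref{blabla} guarantees that the dual inclusion $j^*\colon \mathbb M^*\to \mathcal W$ is an epimorphism; dualising back gives $f=g'^*\circ j^*$, exhibiting $f$ as a chain $\mathbb M^*\twoheadrightarrow \mathcal W\to \mathcal N$ whose second arrow is induced by an $R$-module map $h\colon W\to N$ via the anti-equivalence between quasi-coherent modules and module schemes. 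Because $j^*_R$ is surjective, $\Ima f_R=\Ima h$, and the standard factorization of $h$ through its image finishes the argument.

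For the direction ($\Leftarrow$), my plan is to verify both conditions of Theorem \ref{blabla}. The epimorphism condition is easy: given a submodule scheme $j\colon \mathcal N^*\hookrightarrow \mathbb M$, apply the hypothesis to $f=j^*$ to obtain $f=i\circ f'$ with $i\colon \mathcal M'\hookrightarrow \mathcal N$ and $M'=\Ima f_R$; dualising shows $j=f'^*\circ i^*$ is mono, so $i^*$ is mono, and Lemma \ref{epi} then forces $i$ to be an isomorphism. Hence $M'=N$ and $f=j^*$ is an epimorphism, which is the SML half of Theorem \ref{blabla}.

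The harder part will be showing that $\mathbb M$ is left-exact ML, which by Theorem \ref{ML}(2) reduces to showing that every $g\colon \mathcal N^*\to \mathbb M$ factors through a submodule scheme of $\mathbb M$. Dualising $g$ to $f$ and factoring $f=i\circ f'$ via the hypothesis yields $g=f'^*\circ i^*$, where $i^*\colon \mathcal N^*\twoheadrightarrow \mathcal M'^*$; the key sub-claim then becomes that $f'^*\colon \mathcal M'^*\to \mathbb M$ is itself a monomorphism. This is the step I expect to be the main obstacle, and I would attack it as follows: take $h\in\Ker f'^*_S\subseteq\Hom_R(M',S)$, view $h$ via Yoneda as a morphism $\tilde h\colon \mathcal S^*\to \mathcal M'^*$, observe that $f'^*\circ\tilde h=0$ as a map $\mathcal S^*\to \mathbb M$, dualise this identity to obtain $\tilde h^*\circ f'=0$ as a map $\mathbb M^*\to \mathcal S$, and finally evaluate at $R$ to get $h\circ f'_R=0$. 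Since $f'_R$ is surjective onto $M'$ by construction, $h$ must vanish, so $f'^*$ is mono. Once this sub-claim is settled, Theorems \ref{ML} and \ref{blabla} combine to give left-exact SML.
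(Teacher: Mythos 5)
Your argument is correct, and it splits naturally into two halves. The forward implication is essentially the paper's own proof: dualize $f$, factor $f^*$ through a submodule scheme (Proposition \ref{a} / Theorem \ref{ML}), use the epimorphism property of the dual inclusion (Proposition \ref{Ppsml} / Theorem \ref{blabla}), and push down to the image. The converse, however, takes a genuinely different route. The paper verifies the two defining conditions directly from the factorization hypothesis: a morphism $g\colon \mathcal N^*\to\mathbb M$ is zero iff $g^*_R=0$, which gives the SML injectivity immediately, and left-exactness of $N\mapsto \Hom_{\RR}(\mathbb M^*,\mathcal N)$ follows because, by hypothesis, $\tilde j\circ f=0$ iff $f$ lifts to $\mathcal N_1$. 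You instead route everything through the structural characterizations of Theorems \ref{ML} and \ref{blabla}; this is more conceptual but costs you the extra sub-claim that $f'^*\colon\mathcal M'^*\to\mathbb M$ is a monomorphism, and it also makes the converse depend on the paper's standing hypothesis that $\mathbb M$ admits an epimorphism from a direct sum of module schemes (needed to invoke Theorem \ref{ML}), which the paper's direct verification does not use. Your Yoneda/double-dual argument for the sub-claim is sound: it reduces to $h\circ f'=0\Rightarrow h_R\circ f'_R=0\Rightarrow h_R=0$, with $f'_R$ surjective by construction. Two small repairs: the assertion that $i^*\colon\mathcal N^*\to\mathcal M'^*$ is an epimorphism is false in general (restriction maps $\Hom_R(N,S)\to\Hom_R(M',S)$ need not be surjective), but it is harmless since only the factorization of $g$ through $\mathcal M'^*$ is used; and in the epimorphism step you should record why surjectivity of $f'_R$ makes $f'$ an epimorphism of functors --- this follows from naturality, since $m'\otimes s=f'_S\bigl(\mathbb M^*(\cdot s)(w)\bigr)$ for any $w$ with $f'_R(w)=m'$.
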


\begin{proof} $\Rightarrow)$ The dual morphism $f^*\colon \mathcal N^*\to \mathbb M$ factors trough a submodule scheme $\mathcal N'^*\overset i\subseteq \mathbb M$, by Proposition \ref{a}.
Dually, we have the morphisms $\mathbb M^*\overset{i^*}\to \mathcal N'\to \mathcal N$ and 
$\mathbb M^*\to \mathcal N'$ is an epimorphism, by Proposition \ref{Ppsml}. 
Hence, $\Ima f_R=\Ima[N'\to N]$ and $\mathbb M^*$ factors through the quasi-coherent modules associated to $\Ima f_R$ since the morphism $\mathcal N'\to \mathcal N$ factors through it.

$\Leftarrow)$ A morphism $g\colon \mathcal N^*\to \mathbb M$ is zero iff
the dual morphism $g^*\colon \mathbb M^*\to \mathcal N$ is zero, and this last morphism is zero iff $\Ima g^*_R=0$, that is, $g^*_R=0$. Therefore, the morphism
$\mathbb M(N)=\Hom_{\R}(\mathcal N^*,\mathbb M)\to \Hom_{\R}(\mathbb M^*(R), N)$ is injective and $\mathbb M$ is an SML module. Let us check that $\mathbb M$ is left-exact.
If $N_1\subseteq N_2$, then
the morphism $\mathbb M(N_1)\to \mathbb M(N_2)$ is injective, since 
$ \Hom_{\R}(\mathbb M^*(R), N_1)\to  \Hom_{\R}(\mathbb M^*(R), N_2)$ is injective.
Consider an exact sequence of $R$-module morphisms $0\to N_1\overset i\to N_2\overset j\to N_3$ and let $\mathcal N_1\overset{\tilde i}\to \mathcal N_2\overset{\tilde j}\to \mathcal N_3$ be the associated morphisms. It remains to prove that $\Ker \mathbb M(j)=\Ima\mathbb M(i)$. Observe that $\mathbb M(N)=\mathbb M^{**}(N)=\Hom_{\RR}(\mathbb M^*,\mathcal N)$. By the hipothesis, a morphism $f\colon \mathbb M^*\to \mathcal N_2$ satisfies $\tilde j\circ f=0$ iff there exists a morphism $g\colon \mathbb M^*\to \mathcal N_1$ such that $f=\tilde i\circ g$. We are done.

\end{proof}

\begin{corollary} \label{C5.9} Let $\,M\,$ be an $\,R$-module and let $\,\{M_i\}_{i\in I}\,$ be the set of all  finitely generated $\,R$-submodules of $\,M$, and $\,M'_i:=\Ima[M^*\to M_i^*]$. Then,  $\,M\,$ is a flat strict Mittag-Leffler module iff   the natural morphism
$$N\otimes_R M\to \lim \limits_{\rightarrow} \Hom_{R}(M_i',N)$$
is an isomorphism, for any right $\,R$-module $\,N$.
\end{corollary}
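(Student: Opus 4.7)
My plan is to combine the equivalence ``$M$ is flat strict Mittag-Leffler iff $\mathcal M$ is a left-exact SML $\mathcal R$-module'' (stated in the introduction, following \cite{Garfinkel}) with the characterization of left-exact SML $\mathcal R$-modules from Theorem \ref{blabla}(1). The content of the corollary is that, when $\mathbb M = \mathcal M$ is quasi-coherent, the abstract family of submodule schemes $\{\mathcal N_j^*\}$ appearing in Theorem \ref{blabla}(1) refines to the explicit family $\{\mathcal{M'}_i^*\}$ indexed by finitely generated submodules $M_i \subseteq M$, with the corresponding direct limit realizing, at level $N$, the natural morphism $N\otimes_R M\to \lim\limits_{\rightarrow} \Hom_{R}(M_i',N)$ of the statement (which assembles from the canonical evaluations $M_i \otimes_R N \to \Hom_R(M'_i,N)$, $m\otimes n\mapsto (m'\mapsto m'(m)\,n)$, together with $\mathcal M = \lim\limits_{\rightarrow} \mathcal M_i$).

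For the forward direction, I would assume $\mathcal M$ is a left-exact SML $\mathcal R$-module. Theorem \ref{blabla}(1) yields $\mathcal M = \lim\limits_{\rightarrow} \mathcal N_j^*$ with submodule scheme inclusions $\mathcal N_j^* \hookrightarrow \mathcal M$ and dual epimorphisms $\varphi_j\colon \mathcal M^* \twoheadrightarrow \mathcal N_j$. By Theorem \ref{prop4}, $\varphi_j = \sum_{k=1}^r m_k \otimes n_k$ in $\Hom_{\mathcal R}(\mathcal M^*, \mathcal N_j) = M \otimes_R N_j$, and by Note \ref{2.12N} $\varphi_j$ factors through the quasi-coherent module of $\langle n_1,\ldots,n_r \rangle \subseteq N_j$; being an epimorphism, this forces $N_j$ to be finitely generated. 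Setting $M_{i(j)} := \langle m_1,\ldots,m_r \rangle \subseteq M$, a lift of $\sum m_k \otimes n_k$ to $M_{i(j)} \otimes_R N_j$ produces a factorization $\mathcal M^* \to \mathcal M_{i(j)}^* \to \mathcal N_j$; evaluating at $R$, the image of $M^* \to M_{i(j)}^*$ is $M'_{i(j)}$ and the composite is surjective, so we obtain a surjection $M'_{i(j)} \twoheadrightarrow N_j$ whose dual is an inclusion $\mathcal N_j^* \hookrightarrow \mathcal{M'}_{i(j)}^*$ compatible with the embeddings into $\mathcal M$. Therefore each $\mathcal N_j^*$ sits inside some $\mathcal{M'}_{i(j)}^* \subseteq \mathcal M$, and it follows that $\mathcal M = \lim\limits_{\rightarrow} \mathcal{M'}_i^*$ via the natural morphism.

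For the backward direction, once the above compatibility places each $\mathcal{M'}_i^*$ as a submodule scheme of $\mathcal M$ with dual epimorphism $\mathcal M^* \twoheadrightarrow \mathcal{M'}_i$ (coming from the defining surjection $M^* \twoheadrightarrow M'_i$), the hypothesis $\mathcal M = \lim\limits_{\rightarrow} \mathcal{M'}_i^*$ verifies the criterion of Theorem \ref{blabla}(1), so $\mathcal M$ is a left-exact SML $\mathcal R$-module and $M$ is flat strict Mittag-Leffler. The main obstacle I expect is establishing the compatibility asserted in the forward direction---that $\mathcal N_j^* \hookrightarrow \mathcal{M'}_{i(j)}^* \to \mathcal M$ coincides with the original $\mathcal N_j^* \hookrightarrow \mathcal M$---which requires carefully tracking the element $\sum_k m_k \otimes n_k$ through the adjunctions of Propositions \ref{trivial} and \ref{tercerb}, Theorem \ref{prop4}, and the construction of the natural morphism of the statement.
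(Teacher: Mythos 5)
Your overall strategy---reduce to Theorem \ref{blabla}(1) and identify the abstract family of submodule schemes with the family $\{{\mathcal M'_i}^*\}$---is reasonable, and several steps are sound: the finite generation of the $N_j$ via Theorem \ref{prop4} and Note \ref{2.12N}, and the surjection $M'_{i(j)}\twoheadrightarrow N_j$ obtained by evaluating at $R$. But there is a genuine gap in the forward direction, and it is not merely the ``compatibility'' you flag at the end: it is the very existence of the monomorphism ${\mathcal M'_i}^*\hookrightarrow\mathcal M$, equivalently of a morphism of functors $\mathcal M^*\to{\mathcal M'_i}$ restricting to the canonical surjection $M^*\twoheadrightarrow M'_i$ on $R$-points. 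A surjection of $R$-modules $M^*\to M'_i$ does not by itself induce a morphism of $\R$-modules $\mathcal M^*\to{\mathcal M'_i}$: the source $\mathcal M^*$ is not quasi-coherent, so Proposition \ref{tercer} does not apply, and the natural comparison morphism goes the wrong way, $(\mathcal M^*)_{qc}\to\mathcal M^*$. What is really needed is that the restriction morphism $\mathcal M^*\to\mathcal M_i^*$ factors through the quasi-coherent module associated with the image of its $R$-points; this is a factorization of the type of Theorem \ref{TTTB}, which you never invoke, and which cannot be applied directly here because its target must be quasi-coherent whereas $\mathcal M_i^*$ is only a module scheme. This is precisely where the \emph{strict} Mittag-Leffler hypothesis does its work, and your argument has no substitute for it.

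The paper's proof avoids the detour through Theorem \ref{blabla}(1) altogether: for each finitely generated $M_i\subseteq M$ it chooses a finite free cover $L_i\to M_i$, so that $\mathcal L_i^*$ \emph{is} quasi-coherent, applies Theorem \ref{TTTB} to the composite $\mathcal M^*\to\mathcal M_i^*\hookrightarrow\mathcal L_i^*$ to obtain the factorization $\mathcal M^*\to{\mathcal M'_i}\to\mathcal M_i^*$, and dualizes to get $\mathcal M_i\to{\mathcal M'_i}^*\hookrightarrow\mathcal M$; since $M=\cup_i M_i$, this yields $\mathcal M=\ilim{i}{\mathcal M'_i}^*$ over \emph{all} finitely generated submodules at once. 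Note that even if your embeddings existed, your construction only controls the special submodules $M_{i(j)}$ attached to the abstract family, whereas the statement requires the limit over every finitely generated $M_i$. The backward direction you propose is essentially the paper's (apply Theorem \ref{blabla}), but it too presupposes the epimorphisms $\mathcal M^*\to{\mathcal M'_i}$, which in that direction should instead be extracted by dualizing the monomorphisms ${\mathcal M'_i}^*\hookrightarrow\mathcal M$ furnished by the hypothesis $\mathcal M=\ilim{i}{\mathcal M'_i}^*$.
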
 

\begin{proof} $\Rightarrow)$ 
Let $\{M_i\}_{i\in I}$ be the set of  finitely generated submodules of $M$. 
Let $L_i$ be a finite free module, $L_i\to M_i$ an epimorphism and $M_i\to M$  the inclusion morphism, for any $i\in I$. Let  $\pi_i\colon \mathcal L_i\to \mathcal M_i$  and $f_i\colon \mathcal M_i\to\mathcal M$ be the induced morphisms. Taking dual $\RR$-modules, we have the morphisms
$$\mathcal M^*\overset{f_i^*}\to \mathcal M_i^*\overset{\pi_i^*}\hookrightarrow \mathcal L_i^*.$$
$\Ima (\pi_i^*\circ f_i^*)_R=\Ima (f_i^*)_R=M'_i$. By Theorem \ref{TTTB}, the composite morphism $\pi_i^*\circ f_i^*$
factors through the natural morphism $\mathcal M_i'\to \mathcal L_i^*$,
which factors through the natural morphism $\mathcal M_i'\to \mathcal M_i^*$. 
Then, we have the morphisms
$$\mathcal M^*\to \mathcal M'_i\to \mathcal M_i^*.$$ Taking dual $\RR$-modules, we have  the morphisms $\mathcal M_i\to {\mathcal M'_i}^*\to \mathcal M$ and
${\mathcal M'}_i^*\to \mathcal M$ is a monomorphism. Hence, $\mathcal M=\lim \limits_{\rightarrow}{\mathcal M'}_i^*$.
 Therefore, $$N\otimes_R M={\mathcal M}(N)=\lim_{\rightarrow} {{\mathcal M'_i}^*}(N)=\lim_{\rightarrow} \Hom_{R}(M'_i,N),$$
for any right $R$-module $N$.

$\Leftarrow)$ $\mathcal M=\ilim{i} {\mathcal M'_i}^*$ and the morphisms
$\mathcal M^*\to \mathcal M'_i$ are epimorphisms. By Theorem \ref{blabla}, $M$ is a flat strict Mittag-Leffler module.

\end{proof}

\subsection{Other characterizations of flat strict Mittag-Leffler modules}

\begin{lemma} \label{lemar2} Let $\mathbb M$ be left-exact and a reflexive $\RR$-module. The cokernel of an $\R$-module morphism $f\colon\mathbb M^*\to \mathcal N$ is quasi-coherent iff $f$ factors through the quasi-coherent module associated with $\Ima f_R$.

\end{lemma}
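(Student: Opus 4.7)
The plan is to set $N' := \Ima f_R \subseteq N$, denote by $\mathcal N'$ the quasi-coherent module associated with $N'$, and by $i\colon \mathcal N'\hookrightarrow \mathcal N$ the natural inclusion. The lemma then asks: is $\Coker f$ quasi-coherent if and only if $f$ factors through $i$? I will handle the two implications separately, with the forward direction being the one that truly uses the hypotheses on $\mathbb M$.

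For ($\Leftarrow$), I would argue module by module. Writing $f = i\circ g$ with $g\colon \mathbb M^*\to \mathcal N'$, the hypothesis $\Ima f_R=N'$ forces $g_R\colon \mathbb M^*(R)\to N'$ to be surjective. Then naturality of $g$ with respect to the morphisms $\cdot s\colon R\to S$, $r\mapsto rs$, shows that every pure tensor $n'\otimes s\in N'\otimes_R S$ lies in $\Ima g_S$: if $g_R(m)=n'$, then $g_S(\mathbb M^*(\cdot s)(m))=\mathcal N'(\cdot s)(n')=n'\otimes s$. Hence $\Ima g_S = N'\otimes_R S$, so $\Ima f_S = \Ima(N'\otimes_R S\to N\otimes_R S)$, and right-exactness of tensor identifies $\Coker f(S)$ with $(N/N')\otimes_R S$. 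Thus $\Coker f\simeq \mathcal{N/N'}$ is quasi-coherent. Note that this half does not actually require left-exactness or reflexivity of $\mathbb M$.

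For ($\Rightarrow$), assume $\Coker f$ is quasi-coherent. Evaluating at $R$ gives $\Coker f(R)=N/N'$, so by Corollary \ref{2.4} one has $\Coker f=\mathcal{N/N'}$ and the cokernel projection is the natural morphism $\pi\colon \mathcal N\to\mathcal{N/N'}$. This is where reflexivity and left-exactness are essential: since $\mathbb M=\mathbb M^{**}$, the set $\mathbb M(N)$ equals $\Hom_{\R}(\mathbb M^*,\mathcal N)$, and similarly for $N'$ and $N/N'$, so $f$ corresponds to an element $m_f\in \mathbb M(N)$. Left-exactness of $\mathbb M$ applied to $0\to N'\to N\to N/N'$ yields an exact sequence $0\to \mathbb M(N')\to \mathbb M(N)\to \mathbb M(N/N')$. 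The image of $m_f$ in $\mathbb M(N/N')$ corresponds, under the reflexivity identification, to the composite $\pi\circ f\colon \mathbb M^*\to \mathcal{N/N'}$, which vanishes by definition of the cokernel. Thus $m_f$ lifts to $\mathbb M(N')$, giving the desired factorization $f=i\circ g$.

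The main obstacle is the translation performed in the last paragraph: one must recognize that ``$f$ factors through $\mathcal N'\hookrightarrow \mathcal N$'' is precisely the same as ``$m_f$ lifts along $\mathbb M(N')\to \mathbb M(N)$'' under the isomorphism $\mathbb M(-)\simeq\Hom_{\R}(\mathbb M^*,\mathcal{-})$ furnished by reflexivity. Once that bridge between morphisms out of $\mathbb M^*$ and elements of $\mathbb M$ is in place, the left-exactness of $\mathbb M$ is exactly the tool that promotes the vanishing of the composite in $\Coker f$ to the existence of a lift, and all remaining steps are bookkeeping via Corollary \ref{2.4} and right-exactness of tensor.
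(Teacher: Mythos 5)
Your proof is correct, and the substantive direction coincides with the paper's argument: via reflexivity one identifies $\mathbb M(S)$ with $\Hom_{\R}(\mathbb M^*,\mathcal S)$, applies left-exactness to $0\to N'\to N\to N/N'$, and lifts $f$ once $\pi\circ f=0$. The only divergence is in the converse: the paper extracts both implications from the single chain of equivalences ``$f$ factors through $\mathcal N'$ iff $\pi\circ f=0$ iff $\Coker f$ is the quasi-coherent module $\mathcal N''$'', so it uses left-exactness and reflexivity for that direction as well, whereas you prove it by the direct tensor computation showing that $g\colon\mathbb M^*\to\mathcal N'$ is an epimorphism of $\R$-modules whenever $g_R$ is surjective, and then invoke right-exactness of the tensor product. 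Your route is slightly longer but correctly isolates the fact that this implication holds for an arbitrary $\R$-module $\mathbb M$, without the left-exactness or reflexivity hypotheses.
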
 

\begin{proof}  
$N':=\Ima f_R$ is the kernel of the morphism $N\to \Coker f_R=:N''$. 
Let $\pi\colon \mathcal N\to \mathcal N''$ be the associated morphism.
Observe that $\mathbb M(N)=\mathbb M^{**}(N)=\Hom_{\RR}(\mathbb M^*, \mathcal N)$. Then,  $f$ factors through $\mathcal N'$  iff 
$\pi\circ f=0$, since $\mathbb M$ is left-exact. The natural morphism $\mathcal N''\to \Coker f$ is an epimorphism. Then, $\pi\circ f=0$ iff $\mathcal N''=\Coker f$. Therefore, $f$ factors through $\mathcal N'$  iff $\mathcal N''=\Coker f$.

\end{proof}

\begin{theorem} \label{TTT} Let $\,M\,$ be an $\,R$-module. Then $\,M\,$ is a flat strict Mittag-Leffler module iff
the cokernel of 
any morphism $\,f\colon \mathcal M^*\to \mathcal R\,$
is quasi-coherent.
\end{theorem}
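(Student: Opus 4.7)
My plan is to prove both directions by reduction to Theorem \ref{TTTB} and Lemma \ref{lemar2}, using the reflexivity of $\mathcal M$ (Theorem \ref{reflex}).

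For $(\Rightarrow)$, since $M$ being flat strict Mittag-Leffler makes $\mathcal M$ a left-exact SML $\R$-module (and reflexive), Theorem \ref{TTTB} gives that every morphism $f\colon \mathcal M^*\to\mathcal N$ factors through the quasi-coherent module associated with $\Ima f_R$; specialising to $\mathcal N=\mathcal R$ and invoking Lemma \ref{lemar2} (whose left-exactness hypothesis is satisfied) yields that $\Coker f$ is quasi-coherent.

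For $(\Leftarrow)$, I first translate the hypothesis concretely. Morphisms $f\colon \mathcal M^*\to\mathcal R$ correspond bijectively to elements $m\in M$ (via Proposition \ref{trivial}, Theorem \ref{reflex}, and Yoneda for the module scheme $\mathcal R^*$), with the $m$-evaluation $f_m$ satisfying $(f_m)_S(\phi)=\phi(m)$. Since $\Coker f_m$ is quasi-coherent and takes the value $R/T_m$ at $R$, where $T_m:=\{h(m):h\in M^*\}=\Ima(f_m)_R$ is the trace ideal of $m$, Corollary \ref{2.4} forces $\Coker f_m=\mathcal{R}/\mathcal{T}_m$, equivalently $\phi(m)\in T_m\cdot S$ for every $\phi\in\Hom_R(M,S)$ and every right $R$-module $S$. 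Setting $S=M$ and $\phi=\Id_M$ then yields the per-element dual-basis representation $m=\sum_i h_i(m)\,m'_i$ with $h_i\in M^*$, $m'_i\in M$.

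The main step is to promote this per-element representation to a \emph{simultaneous} (uniform) dual-basis on any finite subset $\{m_1,\ldots,m_n\}\subset M$, which I handle by induction on $n$: given uniform dual-basis data $m_i=\sum_j H_j(m_i)\,Q_j$ for $\{m_1,\ldots,m_{n-1}\}$, apply the per-element hypothesis to the remainder $r_n:=m_n-\sum_j H_j(m_n)\,Q_j\in M$ to obtain $r_n=\sum_l h'_l(r_n)\,p'_l$, and then perform a Gram--Schmidt-style adjustment by setting $\tilde H_l:=h'_l-\sum_j h'_l(Q_j)\,H_j\in M^*$; a short calculation shows $\tilde H_l$ vanishes on $m_1,\ldots,m_{n-1}$ and satisfies $\tilde H_l(m_n)=h'_l(r_n)$, producing the augmented uniform dual-basis $\{H_j,\tilde H_l\}$, $\{Q_j,p'_l\}$ for the whole tuple.

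Equipped with uniform dual-bases on every finite subset, I verify the factoring condition of Theorem \ref{TTTB}. For a general $f\colon \mathcal M^*\to\mathcal N$, Note \ref{2.12N} factors $f$ as $\mathcal M^*\overset{g}\to\mathcal L\overset{h}\to\mathcal N$ with $\mathcal L=\mathcal R^n$ free quasi-coherent; $g$ is classified by a tuple $(m_1,\ldots,m_n)\in M^n$ with $g_S(\phi)=(\phi(m_1),\ldots,\phi(m_n))$, and the uniform dual-basis for this tuple translates directly into a factoring of $g$ through the quasi-coherent module associated with $\Ima g_R\subseteq R^n$. Composing with $h$ gives a factoring of $f$ through the quasi-coherent module associated with $\Ima f_R=h_R(\Ima g_R)$, and Theorem \ref{TTTB} concludes that $\mathcal M$ is left-exact SML, i.e., $M$ is flat strict Mittag-Leffler. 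The Gram--Schmidt-style promotion from per-element to simultaneous dual-basis is the crux of the argument.
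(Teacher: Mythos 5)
Your forward direction is identical to the paper's (Theorem \ref{TTTB} plus Lemma \ref{lemar2}), but your converse takes a genuinely different route. The paper's converse never mentions dual bases: it shows directly that any $f\colon\mathcal M^*\to\mathcal N$ with $f_R=0$ is zero (which is exactly the SML injectivity condition, since $\mathcal M(N)=\Hom_{\RR}(\mathcal M^*,\mathcal N)$ by reflexivity), by reducing via Note \ref{2.12N} to finitely generated $N$ and inducting on the number of generators; the cyclic case is where the hypothesis enters, through a lift $g\colon\mathcal M^*\to\mathcal R$ of $f$ and Lemma \ref{lemar2}. Your converse instead extracts from the hypothesis the trace-module condition $m\in M^*(m)\cdot M$, promotes it to simultaneous dual-basis representations on finite tuples by the classical Gram--Schmidt bootstrapping, and then verifies the factoring condition of Theorem \ref{TTTB} for arbitrary $f\colon\mathcal M^*\to\mathcal N$. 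This is correct: your factorization of $g\colon\mathcal M^*\to\mathcal R^n$ through the quasi-coherent module associated with $W=\Ima g_R$ works because the vectors $w_j=(H_j(m_1),\ldots,H_j(m_n))=g_R(H_j)$ lie in $W$, so $\sum_j w_j\otimes Q_j\in W\otimes_R M=\Hom_{\RR}(\mathcal M^*,\mathcal W)$ gives the required lift, and composing with $h$ lands in $\Ima f_R$. In effect you are re-proving, inside this theorem, the equivalences that the paper defers to Proposition \ref{trace2} and Proposition \ref{PAzumaya}; the paper's induction is shorter and stays entirely at the functorial level, while your argument is more explicit and makes the classical dual-basis content visible. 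One minor point: over a noncommutative $R$ the correction term should be written with the scalar acting on the correct side, $\tilde H_l:=h'_l-\sum_j H_j\cdot h'_l(Q_j)$ (using the right $R$-module structure of $M^*$), so that $\tilde H_l(x)=h'_l\bigl(x-\sum_j H_j(x)Q_j\bigr)$; as written, $\sum_j h'_l(Q_j)H_j$ need not even be $R$-linear. With that adjustment the computation goes through.
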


\begin{proof}  $\Rightarrow)$ It is a consequence of Theorem \ref{TTTB} and 
Lemma \ref{lemar2}.

$\Leftarrow)$ Let $f\colon \mathcal M^*\to \mathcal N$ be a  morphism of $\RR$-modules such that $f_R=0$. We have to prove that $f= 0$. By Note \ref{2.12N}, $f$ factors through the quasi-coherent module associated with a finitely generated submodule of $N$. Hence, we can suppose that $N$ is finitely generated. We proceed by induction on the number of generators of $N$. Suppose $N=\langle n\rangle$. Let $\pi\colon \R\to \mathcal N$ be  an epimorphism. By Theorem \ref{prop4}, there exists a morphism $g\colon \mathcal M^*\to \mathcal R$ such that
$f=\pi\circ g$. Observe that $N':=\Ima g_R\subseteq \Ker \pi_R$, since $f_R=0$, then the composite morphism $\mathcal N'\overset i\to \mathcal R\overset\pi\to \mathcal N$ is zero.  By the hypothesis and Lemma \ref{lemar2}, $g$ factors through a morphism $h\colon \mathcal M^*\to \mathcal N'$, then $f=\pi\circ g=\pi\circ i\circ h=0\circ h=0$.

Suppose $N=\langle n_1,\ldots,n_r\rangle$. Put $N_1:=\langle n_1\rangle$ and 
$N_2:=N/N_1$ and let $i\colon \mathcal N_1\to\mathcal N$ and $\pi\colon \mathcal N\to \mathcal N_2$ be the induced morphisms. By Theorem \ref{prop4}, the  sequence of morphisms
$$\Hom_{\mathcal R}(\mathcal M^*,\mathcal N_1)\to \Hom_{\mathcal R}(\mathcal M^*,\mathcal N) \to \Hom_{\mathcal R}(\mathcal M^*,\mathcal N_2)\to 0$$
is exact. The morphism $f\circ \pi$ is zero since $(\pi\circ f)_R=\pi_R\circ f_R=\pi_R\circ 0=0$
and the Induction Hypothesis. Hence, there exists a morphism $g\colon \mathcal M^*\to\mathcal N_1$ such that $f=i\circ g$. The morphism $g_R$ is zero,  since $0=f_R=(i\circ g)_R=i_R\circ g_R$ and $i_R$ is injective. 
By the Induction Hypothesis, $g=0$ and $f=i\circ g=0$.

  \end{proof}

\begin{lemma} \label{tontuno} Let $\,f\colon N\to M\,$ be a morphism of $R$-modules. If the induced morphism $\,\mathcal M^*\to \mathcal N^*\,$ is an epimorphism, then $\,f\,$ has a retraction of $\,R$-modules. \end{lemma}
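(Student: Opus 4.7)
The plan is to unpack what it means for $f^*\colon \mathcal M^*\to \mathcal N^*$ to be an epimorphism of $\R$-modules, and then evaluate at a carefully chosen object to produce the retraction directly.

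First I would recall the general fact, already emphasized in the excerpt, that cokernels in the category of $\R$-modules are computed pointwise: $(\Coker g)(S)=\Coker g_S$ for any $\R$-module morphism $g$. Consequently, the hypothesis that $f^*\colon \mathcal M^*\to \mathcal N^*$ is an epimorphism of $\R$-modules is equivalent to the statement that $f^*_S\colon \mathcal M^*(S)\to \mathcal N^*(S)$ is a surjection of abelian groups for every $R$-module $S$. In particular, via the identification $\mathcal M^*(S)=\Hom_R(M,S)$ and $\mathcal N^*(S)=\Hom_R(N,S)$ of Corollary \ref{2.4}, we get that for every $S$ the precomposition map
$$(-\circ f)\colon \Hom_R(M,S)\longrightarrow \Hom_R(N,S)$$
is surjective.

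The key step is then to specialize to $S=N$. Since $(-\circ f)\colon \Hom_R(M,N)\to \Hom_R(N,N)$ is surjective, there exists $r\in \Hom_R(M,N)$ such that $r\circ f=\Id_N$. This $r$ is precisely a retraction of $f$, finishing the proof.

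There is not really a substantial obstacle here: the whole content is the pointwise characterization of epimorphisms in the category of $\R$-modules, followed by Yoneda-style evaluation at the identity. The only thing worth double-checking is that $\mathcal M^*(N)=\Hom_R(M,N)$ on the nose (which is immediate from $\mathcal M^*(N)=\Hom_\RR(\mathcal M,\mathcal N)$ and Corollary \ref{2.4}), so that the preimage of $\Id_N$ really lives in $\Hom_R(M,N)$ as required.
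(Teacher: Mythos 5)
Your proof is correct and is essentially the paper's argument unwound: the paper splits the epimorphism using projectivity of $\mathcal N^*$ (Proposition \ref{schpro}) and then dualizes the section via the anti-equivalence between module schemes and $R$-modules, and that section, viewed through Yoneda as an element of $\Hom_{\mathcal R}(\mathcal N^*,\mathcal M^*)=\mathcal M^*(N)=\Hom_R(M,N)$, is exactly the preimage of $\Id_N$ you produce by evaluating the pointwise-surjective map at $S=N$. Both routes rest on the same two facts — epimorphisms of $\R$-modules are pointwise surjections (cokernels are computed pointwise) and $\mathcal M^*(S)=\Hom_R(M,S)$ with the induced map being precomposition by $f$ — so there is nothing to object to.
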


\begin{proof} The  epimorphism $\,\mathcal M^*\to \mathcal N^*\,$ has a section since $\mathcal N^*$ is a  projective $\R$-module by Proposition \ref{schpro}. Hence, 
$\,f\,$ has a retraction since the category of $R$-modules is anti-equivalent to the category of $\R$-module schemes.
\end{proof}

 \begin{proposition} \label{MitFre} Let $\,R\,$ be a local ring (that is, a ring where the non-units form a two-sided ideal). 
An $\,R$-module $\,M\,$ is a flat strict Mittag-Leffler module  if and only if  it equals the direct limit of its  finite free direct summands.
\end{proposition}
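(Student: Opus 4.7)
For $(\Leftarrow)$, suppose $M=\varinjlim_\lambda F_\lambda$ where each $F_\lambda$ is a finite free direct summand of $M$. Since $F_\lambda$ is finite free, the quasi-coherent module $\mathcal{F}_\lambda$ and the module scheme $\mathcal{F}_\lambda^*$ coincide as $\mathcal{R}$-modules (both represent $N\mapsto N^{\mathrm{rank}(F_\lambda)}$), so each $\mathcal{F}_\lambda^*$ is a submodule scheme of $\mathcal{M}$ and $\mathcal{M}=\varinjlim \mathcal{F}_\lambda^*$. Because $F_\lambda\hookrightarrow M$ is split, restriction $\Hom_R(M,N)\twoheadrightarrow \Hom_R(F_\lambda,N)$ is surjective for every $R$-module $N$; equivalently, the dual morphism $\mathcal{M}^*\to \mathcal{F}_\lambda^{**}=\mathcal{F}_\lambda$ is an epimorphism of $\mathcal{R}$-modules. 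Theorem \ref{blabla} then gives that $\mathcal{M}$ is a left-exact SML $\mathcal{R}$-module, i.e., $M$ is flat strict Mittag-Leffler.

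For $(\Rightarrow)$, assume $M$ is flat strict Mittag-Leffler and $R$ is local. It suffices to show that every finitely generated $F\subseteq M$ lies in some finite free direct summand of $M$, for then $M$ is the direct limit of such summands (ordered by inclusion). Fix $F=\langle m_1,\ldots,m_n\rangle$ and let $i\colon \mathcal{F}\hookrightarrow \mathcal{M}$ be the inclusion. Applying Theorem \ref{TTTB} to the dual $i^*\colon \mathcal{M}^*\to \mathcal{F}^*$, this morphism factors through the quasi-coherent module $\mathcal{I}$ of $I:=\Ima[M^*\to F^*]$, giving $\mathcal{M}^*\twoheadrightarrow \mathcal{I}\hookrightarrow \mathcal{F}^*$. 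Dualizing the epimorphism produces a monomorphism $\mathcal{I}^*\hookrightarrow \mathcal{M}$ through which $i$ itself factors; in particular, $F\subseteq I^*\subseteq M$ as $R$-submodules, and by the observation following Theorem \ref{TTT} the module $I$ is finitely generated over $R$.

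The strategy is now: \emph{if} $I$ can be shown to be finite free over $R$, then the $\mathcal{R}$-modules $\mathcal{I}$ and $\mathcal{I}^*$ agree, so the epimorphism $\mathcal{M}^*\twoheadrightarrow \mathcal{I}=(\mathcal{I}^*)^*$ matches the hypothesis of Lemma \ref{tontuno}, which supplies a retraction of the inclusion $I^*\hookrightarrow M$. This exhibits $I^*$ as a finite free direct summand of $M$ containing $F$, finishing the argument. The main obstacle is therefore the establishment of the finite freeness of $I$, and this is precisely where the local hypothesis on $R$ is needed.

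To confront that obstacle, the plan is to use the equivalence between flat strict Mittag-Leffler and local projectivity: choose $\phi_1,\ldots,\phi_s\in M^*$ whose images generate $I\subseteq F^*$ and $p_1,\ldots,p_s\in M$ such that $f=\sum_j\phi_j(f)\,p_j$ for every $f\in F$. Set $\alpha=(\phi_j)\colon M\to R^s$, $\beta(e_j)=p_j\colon R^s\to M$, and $A=\alpha\beta\in M_s(R)$; then $A$ fixes $\alpha(F)$, so modulo the maximal ideal $\mathfrak{m}$ its reduction $\bar A$ acts as the identity on $\bar \alpha(\bar F)\subseteq k^s$, where $k=R/\mathfrak{m}$. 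A Nakayama-type local linear algebra argument---refining the pairs $(\phi_j,p_j)$ and, if necessary, enlarging $s$ through iterated applications of local projectivity---will then promote $A$ to an idempotent in $M_s(R)$; its image is a free direct summand of $R^s$ isomorphic to $I$, yielding the required finite freeness. This delicate local-ring adjustment, converting a matrix that is idempotent only modulo $\mathfrak{m}$ into a genuine idempotent compatible with the containment $F\subseteq I^*$, is the heart of the proof.
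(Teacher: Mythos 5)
Your $(\Leftarrow)$ direction is correct and is essentially the paper's argument (identify $\mathcal F_\lambda$ with the module scheme $\mathcal F_\lambda^*$ for finite free $F_\lambda$, note the dual maps $\mathcal M^*\to\mathcal F_\lambda$ are epimorphisms because the inclusions split, and invoke Theorem \ref{blabla}). The $(\Rightarrow)$ direction, however, has a genuine gap: it is explicitly a plan whose decisive step --- promoting $A=\alpha\beta$ to an idempotent and concluding that $I=\Ima[M^*\to F^*]$ is finite free --- is never carried out, and that step cannot be carried out as stated because the intermediate claim is false. Take $R=k[x]/(x^2)$ (local), $M=R$ (free, hence flat strict Mittag-Leffler) and $F=(x)\subseteq M$. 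Then $F\cong R/(x)$, so $F^*=\Hom_R(F,R)\cong (x)$, and the restriction map $M^*=R\to F^*$ is surjective; hence $I\cong (x)\cong R/(x)=k$, which is not a free $R$-module. Consequently $\mathcal I$ is not a module scheme, Lemma \ref{tontuno} does not apply to $\mathcal M^*\twoheadrightarrow\mathcal I$, and $I^*$ cannot serve as the finite free direct summand containing $F$ (the correct summand in this example is $R$ itself). There is also a smaller misstep: Theorem \ref{TTTB} concerns morphisms $\mathbb M^*\to\mathcal N$ with quasi-coherent target, whereas $\mathcal F^*$ is a module scheme; one must first compose with $\mathcal F^*\hookrightarrow\mathcal L^*$ for a finite free cover $L\to F$, as is done in the proof of Corollary \ref{C5.9}.

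The paper's proof avoids exactly this trap. Writing $\mathcal M=\varinjlim\mathcal N_i^*$ with each $\mathcal M^*\to\mathcal N_i$ an epimorphism and $N_i$ finitely generated, it chooses a \emph{minimal} finite free cover $L_i\to N_i$ (one inducing an isomorphism $L_i/L_i\mathfrak m\to N_i/N_i\mathfrak m$), lifts $\mathcal M^*\to\mathcal N_i$ to $\mathcal M^*\to\mathcal L_i$ by Theorem \ref{prop4}, and deduces from Nakayama's lemma that $M^*\to L_i$ is surjective; Lemma \ref{tontuno} then splits $L_i^*\hookrightarrow M$, and $N_i^*\subseteq L_i^*$ gives the containment of any prescribed finitely generated submodule. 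In other words, the object that becomes a free direct summand is the minimal free cover of $I$, not $I^*$; the local hypothesis enters only through Nakayama, and no idempotent-lifting is needed. If you want to rescue your matrix computation, the statement to aim for is not ``$I$ is free'' but ``$M^*$ surjects onto a minimal free cover of $I$,'' which is precisely that Nakayama step.
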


\begin{proof} Let $\,\mathfrak m\,$ be the set of non-units of $R$ and $K:=R/\mathfrak m$, and assume $\,M\,$ is a flat strict Mittag-Leffler module, so that we can write $\,\mathcal M=\lim \limits_{\rightarrow} \mathcal N_i^*$, where the morphisms $\,\mathcal M^*\to \mathcal N_i\,$ are epimorphisms. By Note \ref{2.12N},
$N_i$ is finitely generated, for any $i\in I$. There exist a finite free (right) $R$-module $L_i$ and an epimorphism $L_i\to N_i$ such that the induced morphism $L_i/L_i \cdot \mathfrak m\to N_i/N_i \cdot \mathfrak m$ is an isomorphism of $K$-modules. By Theorem \ref{prop4},
the epimorphism $\mathcal M^*\to \mathcal N_i$ factors through a morphism $\mathcal M^*\to \mathcal L_i$. The morphism $M^*\to L_i$ is an epimorphism by Nakayama's lemma, since the morphism $M^*/M^*\cdot \mathfrak m\to L_i/L_i\cdot\mathfrak m\simeq N_i/N_i\cdot \mathfrak m $ is an epimorphism. 
Hence, the morphism $\mathcal M^*\to \mathcal L_i$ is an epimorphism. By Lemma \ref{tontuno}, the morphism $L_i^* \to M$ has a retraction, that is, $L_i^*$ is a direct summand of $M$. Finally,  any finitely generated submodule of $M$ is included in some submodule $N_i^*$ and $N_i^*\subseteq L_i^*$. Hence, $M=\lim \limits_{\rightarrow} L_i^*$.

Conversely, let $\{L_i\}_{i\in I}$ be the set of finite free summands 
of $M$. Thus, $\,\mathcal M=\lim \limits_{\rightarrow} \mathcal L_i\,$ and the morphisms $\,\mathcal M^*\to \mathcal L_i^*\,$ are epimorphisms, so that $\,M\,$ is a flat strict Mittag-Leffler module.
\end{proof}

 \begin{proposition}[\cite{Gobel} Th. 3.29] \label{MitFre2} Let $\,R\,$ be a principal ideal domain. 
An $\,R$-module $\,M\,$ is a flat strict Mittag-Leffler module  if and only if  it equals the direct limit of its  finite free direct summands.
\end{proposition}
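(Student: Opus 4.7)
\medskip
The backward implication $(\Leftarrow)$ is identical to the corresponding half of Proposition \ref{MitFre}: if $M=\ilim{i}L_i$ with every $L_i$ a finite free direct summand of $M$, then $\mathcal M=\ilim{i}\mathcal L_i$, and for each $i$ a retraction $M\to L_i$ dualises to a section $\mathcal L_i^*\to\mathcal M^*$, so the natural morphism $\mathcal M^*\to\mathcal L_i^*$ is (split) epic. Theorem \ref{blabla} then yields that $\mathcal M$ is a left-exact SML $\mathcal R$-module, i.e., that $M$ is a flat strict Mittag-Leffler module.

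\medskip
For $(\Rightarrow)$, the plan is to take as starting point the description given by Corollary \ref{C5.9},
$$\mathcal M=\ilim{i\in I}\mathcal M'{}_i^{*},$$
indexed over the finitely generated submodules $M_i\subseteq M$, where $M'_i:=\Ima[M^*\to M_i^*]$, the canonical map $\mathcal M^*\to \mathcal M'_i$ is epic, and each $\mathcal M'{}_i^{*}\hookrightarrow \mathcal M$ is a submodule-scheme inclusion. The PID hypothesis then enters in a single step: each $M'_i$ is a finitely generated free $R$-module. Indeed, $M$ is flat over a PID, hence torsion-free, so the finitely generated submodule $M_i\subseteq M$ is free; consequently $M_i^*=\Hom_R(M_i,R)$ is finitely generated free, and $M'_i\subseteq M_i^*$, being a finitely generated submodule of a finitely generated free module over a PID, is itself finitely generated and free.

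\medskip
Set $L_i:=(M'_i)^*$, again finitely generated free. Proposition \ref{3.11}, applied to the finitely generated projective $R$-module $M'_i$, gives that the module scheme $\mathcal M'{}_i^{*}$ is quasi-coherent, and a direct computation of values identifies $\mathcal M'{}_i^{*}\simeq \mathcal L_i$ and, dually, $\mathcal M'_i\simeq \mathcal L_i^{*}$. Under these identifications, the inclusion $\mathcal M'{}_i^{*}\hookrightarrow \mathcal M$ becomes a monomorphism of quasi-coherent $\mathcal R$-modules $\mathcal L_i\hookrightarrow \mathcal M$, which by Corollary \ref{2.4} comes from an $R$-module monomorphism $\psi_i\colon L_i\hookrightarrow M$; its $\mathcal R$-dual $\psi_i^{*}\colon \mathcal M^*\to \mathcal L_i^{*}$ is precisely the epimorphism $\mathcal M^*\twoheadrightarrow \mathcal M'_i$ provided by Corollary \ref{C5.9}. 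Lemma \ref{tontuno} then yields a retraction of $\psi_i$, so each $L_i$ is a finite free direct summand of $M$. Passing from $\mathcal M=\ilim{i}\mathcal L_i$ to $R$-modules via Corollary \ref{2.4} gives $M=\ilim{i}L_i$, as required.

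\medskip
The only non-trivial point in carrying this out is bookkeeping: checking that, under the quasi-coherent/module-scheme identifications of Proposition \ref{3.11} and the reflexivity of Theorem \ref{reflex}, the inclusion of $\mathcal M'{}_i^{*}$ into $\mathcal M$ and the epimorphism $\mathcal M^*\twoheadrightarrow \mathcal M'_i$ correspond to the $R$-module inclusion $L_i\hookrightarrow M$ and its dual, so that Lemma \ref{tontuno} can be invoked. Once this is matched up, the PID hypothesis is used only through the classical fact that a finitely generated submodule of a finitely generated free module over a PID is again free.
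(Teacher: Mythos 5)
Your proof is correct and follows essentially the same route as the paper: the backward implication is the one from Proposition \ref{MitFre}, and the forward implication rests on presenting $\mathcal M$ as a direct limit of submodule schemes of modules that embed in finite free modules (hence are free, by the PID hypothesis), followed by Lemma \ref{tontuno} to split them off as direct summands. The only cosmetic difference is that you quote Corollary \ref{C5.9} for that presentation, whereas the paper rederives the factorization through $\Ima \pi_R$ directly from Theorem \ref{TTT} and Lemma \ref{lemar2}; the underlying construction is the same.
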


\begin{proof} Assume $\,M\,$ is a flat strict Mittag-Leffler module, so that we can write $\,\mathcal M=\lim \limits_{\rightarrow} \mathcal N_i^*$, where the morphisms $\,\mathcal M^*\to \mathcal N_i\,$ are epimorphisms. By Note \ref{2.12N},
$N_i$ is finitely generated, for any $i\in I$. There exist a finite free $R$-module $L_i$ and an epimorphism $L_i\to N_i$. By Theorem \ref{prop4},
the epimorphism $\mathcal M^*\to \mathcal N_i$ factors through a morphism $\pi\colon \mathcal M^*\to \mathcal L_i$. By Theorem \ref{TTT} and Lemma \ref{lemar2}, $\pi$ factors through the quasi-coherent module associated with $L'_i=\Ima \pi_R$. Observe that $L'_i$ is a finite free module since it is a submodule of a finite free module and $R$ is a principal
ideal domain. 
By Lemma \ref{tontuno}, the morphism ${L'_i}^* \to M$ has a retraction, that is, ${L'_i}^*$ is a direct summand of $M$. Finally,  any finitely generated submodule of $M$ is included in some submodule $N_i^*$ and $N_i^*\subseteq {L'_i}^*$. Hence, $M=\lim \limits_{\rightarrow} {L'_i}^*$.

Conversely, proceed as in the previous proof.
\end{proof}

\begin{corollary}[\cite{Kaplansky}] Let $R$ be  a local ring or a principal ideal domain, 
Then, any projective $R$-module of countable type  is free.
\end{corollary}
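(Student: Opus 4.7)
The plan is to combine Propositions \ref{MitFre} and \ref{MitFre2} with the countable-generation hypothesis to present $M$ as a countable ascending union of finite free direct summands of itself, and then telescope this union into a direct sum of finite free modules. First, every projective $R$-module is a flat strict Mittag-Leffler module: $M$ is flat as a summand of a free module, and if $M\oplus M'\simeq R^{(J)}$ then $\mathcal M$ is a summand of $\mathcal R^{(J)}=\oplus_{J}\mathcal R$, which embeds in $\prod_{J}\mathcal R$, so $\mathcal M$ is SML by Proposition \ref{caca}. Hence, by Proposition \ref{MitFre} or \ref{MitFre2}, one can write $M=\lim_{\rightarrow}L_i$, where $\{L_i\}_{i\in I}$ is the directed set of finite free direct summands of $M$ ordered by inclusion.

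Second, since $M$ is of countable type, I would choose a countable set $\{m_n\}_{n\in\mathbb N}$ of generators and, using the directedness of $\{L_i\}$, inductively build an increasing chain $L_1'\subseteq L_2'\subseteq\cdots$ of finite free direct summands of $M$ with $m_1,\ldots,m_n\in L_n'$ for every $n$, so that $M=\bigcup_n L_n'$. Since $L_n'$ is a direct summand of $M$ and $L_n'\subseteq L_{n+1}'$, the restriction to $L_{n+1}'$ of the projection $M\twoheadrightarrow L_n'$ is a retraction of the inclusion $L_n'\hookrightarrow L_{n+1}'$; thus $L_{n+1}'=L_n'\oplus F_n$ for some finitely generated projective $R$-module $F_n$. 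Over a local ring (by Nakayama's lemma) and over a principal ideal domain (since submodules of finite free modules are free), finitely generated projective modules are free, so each $F_n$ is finite free.

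Iterating these splittings gives $L_n'=L_1'\oplus F_1\oplus\cdots\oplus F_{n-1}$, whence
$$M=\bigcup_{n\geq 1} L_n'=L_1'\oplus\bigoplus_{n\geq 1}F_n$$
is a countable direct sum of finite free $R$-modules, and therefore free. No step presents a serious obstacle; the role of the local/principal-ideal-domain hypothesis is confined to the last link, where it lets one free-up the complement $F_n$ at every stage of the telescoping, while Propositions \ref{MitFre} and \ref{MitFre2} do the heavier structural work upstream.
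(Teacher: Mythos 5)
Your proof is correct and follows essentially the same route as the paper: exhaust $M$ by a countable increasing chain of finite free direct summands containing the generators, split each inclusion $L_n'\subseteq L_{n+1}'$ using the fact that $L_n'$ is a summand of $M$, and telescope. You merely make explicit two points the paper leaves implicit (that projective implies flat strict Mittag-Leffler, and that the complements $F_n$ are free over a local ring or a PID), which is fine.
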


\begin{proof}  Write $\,M=\langle m_i\rangle_{i\in\mathbb N}$.
By Proposition \ref{MitFre} (or \ref{MitFre2}), there exists a chain of finite free direct summands of $M$
$$ L_0\subseteq L_1\subseteq\cdots\subseteq L_n\subseteq \cdots\subseteq M$$
such that $\langle m_1,\ldots, m_n\rangle \subseteq L_n$, for any $\,n\in\mathbb N$, and $\,M=\cup_{n\in \mathbb N} L_n$. 
As $\,L_i\,$ is a direct summand of $\,L_{i+1}$, let us write $L_{i+1}=L_i\oplus L'_{i+1}$, with $\,L'_{0}=L_0$. It is then easy to check that 
$\,M\simeq \oplus_{n\in \mathbb N} L'_n$.
\end{proof}

Let us recall that Kaplansky also proved that any projective module over a local ring or a principal ideal domain is free, using that any projective module is a direct sum of countably generated projective modules (\cite{Kaplansky}).

\subsection{Other definitions of flat strict Mittag-Leffler modules}

Finally, let us prove some well-known  characterizations of flat strict Mittag-Leffler modules.

\begin{definition} An $R$-module $M$ is said to be locally projective if for any epimorphism $\pi\colon N\to M$ and any morphism $f\colon R^n\to M$ (for any $n\in \mathbb N$) there exists a morphism $s\colon M\to N$ such that $f=\pi\circ s\circ f$.
\end{definition}

\begin{proposition}[\cite{Azumaya} Prop 6.] \label{PAzumaya} An $R$-module $M$ is locally projective iff it is a flat strict Mittag-Leffler module.
\end{proposition}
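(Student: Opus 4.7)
The strategy is to bridge local projectivity with the factorization criterion of Theorem \ref{TTTB}. Since $\mathcal M$ is reflexive (Theorem \ref{reflex}), Theorem \ref{TTTB} characterizes $M$ being a flat strict Mittag--Leffler module by the condition that every morphism $f\colon \mathcal M^*\to \mathcal N$ factors through the quasi-coherent module associated with $\Ima f_R$. I would translate this into a Zimmermann--Huisgen style \emph{dual basis} identity: for any $m_1,\ldots,m_n\in M$ there exist $\phi_1,\ldots,\phi_k\in M^*$ and $m_1',\ldots,m_k'\in M$ with $m_j=\sum_i\phi_i(m_j)\,m_i'$ for every $j$. This identity will serve as the common bridge between the two notions, with the identification $\Hom_{\mathcal R}(\mathcal M^*,\mathcal N)=M\otimes_R N$ from Theorem \ref{prop4} acting as the dictionary between morphisms and tensor elements.

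For $(\Leftarrow)$, assuming $M$ is flat strict Mittag--Leffler, given an epimorphism $\pi\colon N\to M$ and $f\colon R^n\to M$, $e_j\mapsto m_j$, I form the morphism $h\colon \mathcal M^*\to \mathcal R^n$ corresponding to $\sum_j m_j\otimes e_j\in M\otimes_R R^n$. Its $R$-component sends $w\in M^*$ to $(w(m_1),\ldots,w(m_n))$, so $I:=\Ima h_R\subseteq R^n$. Theorem \ref{TTTB} factors $h$ through $\mathcal I$, i.e., $h=\sum_i m_i'\otimes x_i$ with $x_i\in I$, say $x_i=(w_i(m_1),\ldots,w_i(m_n))$ for some $w_i\in M^*$. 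Matching coefficients of the basis $\{e_j\}$ in $M\otimes_R R^n$ yields the dual basis identity $m_j=\sum_i w_i(m_j)\,m_i'$. Lifting each $m_i'$ to $\tilde m_i'\in N$ via $\pi$ and setting $s(y):=\sum_i w_i(y)\,\tilde m_i'$, one gets $\pi s(m_j)=m_j=f(e_j)$, hence $\pi\circ s\circ f=f$.

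For $(\Rightarrow)$, assuming $M$ locally projective, given any $f\colon \mathcal M^*\to \mathcal N$ I write $f=\sum_j m_j\otimes n_j\in M\otimes_R N$. Applying local projectivity to the canonical epimorphism $\pi\colon R^{(M)}\to M$ and $g\colon R^n\to M$, $e_j\mapsto m_j$, produces $s\colon M\to R^{(M)}$ with $\pi\circ s\circ g=g$. Reading off the finitely many nonzero coordinates of $s(m_1),\ldots,s(m_n)$ extracts $\phi_i\in M^*$ and $m_i'\in M$ satisfying $m_j=\sum_i\phi_i(m_j)\,m_i'$. Substituting into the expression for $f$ rewrites it as $\sum_i m_i'\otimes\bigl(\sum_j\phi_i(m_j)n_j\bigr)=\sum_i m_i'\otimes f_R(\phi_i)$, and since each $f_R(\phi_i)\in \Ima f_R=I$, this exhibits $f$ as coming from $M\otimes_R I$, so $f$ factors through $\mathcal I\hookrightarrow \mathcal N$. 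Theorem \ref{TTTB} concludes that $\mathcal M$ is left-exact SML, i.e., $M$ is a flat strict Mittag--Leffler module. The only delicate point is consistently keeping track of the dual roles of $M\otimes_R N$ as morphism space and as raw tensors; once that bookkeeping is in place, both directions reduce cleanly to the same dual basis identity.
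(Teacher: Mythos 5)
Your proof is correct, but it follows a genuinely different route from the paper's. You reduce both implications to the classical Zimmermann--Huisgen/Garfinkel dual-basis identity $m_j=\sum_i \phi_i(m_j)\,m_i'$, using Theorem \ref{TTTB} together with the dictionary $\Hom_{\mathcal R}(\mathcal M^*,\mathcal N)=M\otimes_R N$ of Theorem \ref{prop4} as the bridge; this is essentially the element-wise proof of Azumaya/Garfinkel transported into the functorial language. The paper instead stays inside the category of $\R$-modules throughout: for local projectivity $\Rightarrow$ SML it takes $f\colon\mathcal M^*\to\mathcal R$, uses the section $s$ to show $\Ima f^*=\Ima(f^*\circ s^*)$ where the latter factors through a free module scheme, and invokes the cokernel criterion of Theorem \ref{TTT}; for the converse it factors $f\colon\mathcal R^n\to\mathcal M$ through a submodule scheme $\mathcal W^*$, lifts by projectivity of $\mathcal W^*$ (Proposition \ref{schpro}), and extends the lift to all of $\mathcal M$ via the surjectivity statement of Proposition \ref{Ppsml}. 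What your version buys is an explicit formula for the section $s(y)=\sum_i \tilde m_i'\,w_i(y)$ and a transparent link to the classical literature; what the paper's version buys is that no elements of $M^*$ ever need to be extracted, everything being carried by the projectivity and epimorphism properties of module schemes. The one point where you should be careful is the left/right bookkeeping you already flag: with $M$ a right $R$-module the identity must be written $m_j=\sum_i m_i'\,\phi_i(m_j)$ and $s(y)=\sum_i \tilde m_i'\,w_i(y)$ for $s$ to be a morphism of right $R$-modules over a noncommutative ring; with that convention fixed, both directions go through as you describe.
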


\begin{proof} $\Rightarrow)$ Consider an $\mathcal R$-module morphism 
$f^*\colon \mathcal M^*\to \mathcal R$ (or equivalently, an $\R$-module morphism 
$f\colon \R\to \mathcal M$). Let $L$ be a free $R$-module and $\pi\colon \mathcal L\to\mathcal M$ an epimorphism. There exists an $\R$-module morphism $s\colon \mathcal M\to \mathcal L$ such that $f=\pi\circ s\circ f$.
Then, $f^*=f^*\circ s^*\circ \pi^*$ and $\Ima f^*=\Ima (f^*\circ s^*)$
since
$$\Ima f^*\supseteq \Ima (f^*\circ s^*)\supseteq \Ima (f^*\circ s^*\circ \pi^*)=\Ima f^*.$$
Hence, $\Coker f^*=\Coker(f^*\circ s^*)$, which is quasi-coherent
by Theorem \ref{TTT}.  Again by Theorem \ref{TTT}, $M$ is a flat strict Mittag-Leffler module.

$\Leftarrow)$  Let $\pi\colon \mathcal N\to \mathcal M$ be an $\R$-module epimorphism and $f\colon \mathcal R^n\to\mathcal M$ an $\R$-module morphism. The morphism $f$ factors through an $\R$-submodule scheme
$i\colon \mathcal W^*\subset \mathcal M$ and a morphism $f'\colon \mathcal R^n\to \mathcal W^*$, since $M$ is a flat Mittag-Leffler module. There exists a morphism $s'\colon \mathcal W^*\to \mathcal N$
such that $\pi\circ s'=i$ since $\mathcal W^*$ is a projective $\mathcal R$-module, by Proposition \ref{schpro}. The map
$$\Hom_{\R}(\mathcal M,\mathcal N)=\mathcal M^*(N)\to \mathcal W(N)=N\otimes_R W=\Hom_{\mathcal R}(\mathcal W^*,\mathcal N)$$
is surjective, by Proposition \ref{Ppsml}. Hence, there exists
an $\R$-module morphism $s\colon \mathcal M\to \mathcal N$
such that $s\circ i=s'$. Therefore, 
$$\pi\circ s\circ f=\pi\circ s\circ i\circ f'=\pi\circ s'\circ f'=i\circ f'=f$$
and $M$ is locally projective.

\end{proof}

\begin{proposition}[\cite{Azumaya} Prop 7.] Let $M$ be a flat strict Mittag-Leffler module and $ N\subseteq M$ a pure submodule. Then, $N$ is a flat strict Mittag-Leffler module and it is locally split in $M$, that is, for any finitely generated submodule $N'\subseteq N$ there exists an $R$-module morphism $r\colon M\to N$ such that $r(n')=n'$ for any $n'\in N'$.

\end{proposition}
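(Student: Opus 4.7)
First, I would verify that $N$ is a flat strict Mittag-Leffler module. Purity of $N\subseteq M$ says that $N\otimes_R S\hookrightarrow M\otimes_R S$ for every $R$-module $S$, so the induced morphism $\mathcal N\to \mathcal M$ is a monomorphism of $\mathcal R$-modules. Since $M$ is flat strict Mittag-Leffler, $\mathcal M$ is an SML $\mathcal R$-module, and Corollary \ref{C5.3} forces $\mathcal N$ to be SML. For left-exactness I would check directly that $N$ is flat: given any injection $S_1\hookrightarrow S_2$, the composition $N\otimes S_1\to N\otimes S_2\hookrightarrow M\otimes S_2$ equals $N\otimes S_1\hookrightarrow M\otimes S_1\hookrightarrow M\otimes S_2$, with the two successive injections provided by purity and by flatness of $M$; hence $N\otimes S_1\to N\otimes S_2$ is injective. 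Thus $\mathcal N$ is a left-exact SML $\mathcal R$-module, so $N$ is flat strict Mittag-Leffler.

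For the locally split property I would use the locally projective characterization (Proposition \ref{PAzumaya}). Fix $N'=\langle n_1,\ldots,n_k\rangle\subseteq N$ and let $f\colon R^k\to M$ send $e_j\mapsto n_j$. Take an epimorphism $\pi\colon L\to M$ with $L$ a free $R$-module. Local projectivity yields $s\colon M\to L$ with $\pi s f=f$. Since $s(N')$ is finitely generated, it lies in a finite-rank free direct summand $R^{J_0}\subseteq L$; composing $s$ with the coordinate projection onto this summand gives $\alpha\colon M\to R^{J_0}$, and restricting $\pi$ gives $\beta\colon R^{J_0}\to M$. By construction, $\beta(\alpha(n_j))=n_j$ for each $j$.

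Choosing a basis $\{e_i\}_{i=1}^{d}$ of $R^{J_0}$, setting $m_i:=\beta(e_i)\in M$, and expanding $\alpha(n_j)=\sum_i a_{ji}e_i$ with $a_{ji}\in R$, the identity $\beta\alpha(n_j)=n_j$ takes the form of a finite linear system
$$n_j=\sum_{i=1}^{d}a_{ji}m_i,\qquad j=1,\ldots,k,$$
with coefficients in $R$, unknowns realized in $M$, and right-hand sides in $N$. By the equational criterion for purity---apply purity to the finitely presented module $P:=R^k/A(R^d)$ with $A=(a_{ji})$: the vector $(n_j)_j$ vanishes in $M\otimes_R P$ since it lies in $A(M^d)$, and purity of $N\otimes_R P\hookrightarrow M\otimes_R P$ forces $(n_j)_j\in A(N^d)$---there exist $m'_i\in N$ with $\sum_i a_{ji}m'_i=n_j$ for all $j$. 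Defining $\beta'\colon R^{J_0}\to N$ by $e_i\mapsto m'_i$ and $r:=\beta'\alpha\colon M\to N$ gives $r(n_j)=\sum_i a_{ji}m'_i=n_j$, so $r|_{N'}=\mathrm{id}_{N'}$, as required.

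The main obstacle is the passage from the idempotent-like identity $\beta\alpha|_{N'}=\mathrm{id}_{N'}$ (which lives inside $M$) to a genuine retraction landing in $N$: local projectivity of $M$ does not by itself force $\beta$ to take values in $N$. The crucial move is to read the factorization $\beta\alpha$ as a finite $R$-linear system with right-hand sides in $N$, so that purity of $N$ in $M$ can replace the generators $m_i=\beta(e_i)\in M$ by suitable $m'_i\in N$ without disturbing the action on $N'$.
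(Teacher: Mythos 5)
Your proof is correct, but the second half follows a genuinely different route from the paper's. For the assertion that $N$ is flat strict Mittag-Leffler you argue essentially as the paper does (purity gives a monomorphism $\mathcal N\hookrightarrow\mathcal M$, then Corollary \ref{C5.3}), and you are in fact more careful: the paper's appeal to Corollary \ref{C5.3} only yields that $\mathcal N$ is SML and leaves the left-exactness (flatness) of $\mathcal N$ implicit, whereas your direct check that purity together with flatness of $M$ forces $N$ flat closes that gap. For the locally split property the paper stays inside its functorial machinery: since $\mathcal N$ is a left-exact SML module, the finitely generated $N'$ lies in some submodule scheme $\mathcal W^*\subseteq\mathcal N\subseteq\mathcal M$, and Proposition \ref{Ppsml} makes the restriction map $\Hom_{\R}(\mathcal M,\mathcal N)=\mathcal M^*(N)\to \Hom_{\R}(\mathcal W^*,\mathcal N)$ surjective, so the inclusion $\mathcal W^*\hookrightarrow\mathcal N$ extends to $t\colon\mathcal M\to\mathcal N$ with $t_R$ the identity on $N'$. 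You instead pass through Proposition \ref{PAzumaya} (local projectivity), factor through a finite free $R^{J_0}$, and then use the equational criterion for purity to replace the elements $\beta(e_i)\in M$ by elements of $N$ without disturbing the action on $N'$; this is essentially Azumaya's original module-theoretic argument. Both are sound. The paper's version is shorter given the machinery already in place and produces a retraction on all of the submodule scheme $\mathcal W^*$, not just on $N'$; yours is more elementary and self-contained at the module level, but it quietly imports the equivalence of tensor-theoretic purity with the finite-system (equational) criterion over an arbitrary associative ring --- standard, and your sketch of it via the finitely presented module $P$ is correct, but it is not proved in the paper and would deserve an explicit citation.
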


\begin{proof} The induced morphism $i\colon \mathcal N\to \mathcal M$ is a monomorphism. Then, by 
Corollary \ref{C5.3}, $N$ is a flat strict Mittag-Leffler module. Given the submodule $N'\subseteq N$, there exists a submodule scheme $i'\colon \mathcal W^*\subseteq \mathcal N$ such  that $N'\subseteq W^*$. 
The map
$$\Hom_{\R}(\mathcal M,\mathcal N)=\mathcal M^*(N)\to \mathcal W(N)=W\otimes_R N\overset{\text{\ref{prop4}}}=\Hom_{\R}(\mathcal W^*,\mathcal N)$$
is surjective, by Proposition \ref{Ppsml}. Then,
there exists an $\R$-module morphism $t\colon \mathcal M\to \mathcal N$ such that $t\circ i\circ i'= i'$.  It is easy to check that $t_R(n')=n'$ for any $n'\in N'$.

\end{proof}

\begin{definition}[\cite{Garfinkel}] A module $M$ is a trace module if every $m\in M$
holds $m\in M^*(m)\cdot M$, where $M^*(m):=\{w(m)\in R\colon w\in M^*\}$    
\end{definition}

\begin{proposition}[\cite{RaynaudG} II 2.3.4] \label{trace2} $M$ is a trace module iff it is a  flat strict Mittag-Leffler module.
\end{proposition}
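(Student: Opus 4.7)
My approach is to show that the trace condition is equivalent to the cokernel characterization of flat strict Mittag-Leffler modules given by Theorem \ref{TTT}: namely, $M$ is a flat strict Mittag-Leffler module iff $\Coker f$ is quasi-coherent for every morphism $f\colon \mathcal M^* \to \mathcal R$.

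By Theorem \ref{prop4}, any such morphism is of the form $f_m\colon \mathcal M^*\to \mathcal R$ for a unique $m\in M$, given at any $R$-module $N$ by $(f_m)_N(w)=w(m)$ for $w\in \Hom_R(M,N)$. In particular $\Ima(f_m)_R = M^*(m)$, so if $\Coker f_m$ is quasi-coherent then it necessarily coincides with the quasi-coherent module associated with $R/M^*(m)$---since two quasi-coherent $\R$-modules agreeing on $R$ agree everywhere (Corollary \ref{2.4})---which evaluated at $N$ gives $N/M^*(m)\cdot N$. Thus quasi-coherence of $\Coker f_m$ is equivalent to the identity $\Ima(f_m)_N = M^*(m)\cdot N$ for every $N$.

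The key step is to check that this pointwise identity is equivalent to the single trace equation $m\in M^*(m)\cdot M$. The inclusion $M^*(m)\cdot N\subseteq \Ima(f_m)_N$ is automatic: for $r=v(m)\in M^*(m)$ and $n\in N$, the map $w\colon x\mapsto v(x)\cdot n$ satisfies $w(m)=rn$, and linearity extends this to sums. Conversely, if $m=\sum r_im_i$ with $r_i\in M^*(m)$ and $m_i\in M$, then for any $w\colon M\to N$, $R$-linearity gives $w(m)=\sum r_iw(m_i)\in M^*(m)\cdot N$, yielding the other inclusion. The backward implication in the desired equivalence is then obtained by specializing to $N=M$ and $w=\Id_M$, which forces $m \in \Ima(f_m)_M = M^*(m)\cdot M$.

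Putting the pieces together, $M$ is a trace module iff $\Coker f_m$ is quasi-coherent for every $m\in M$, iff (by Theorem \ref{TTT}) $M$ is a flat strict Mittag-Leffler module. The only genuinely delicate point is recognising that the single equation $m\in M^*(m)\cdot M$ lifts to the pointwise identity at every $N$; this is made possible by the freedom to construct $w$ as right-multiplication of a fixed functional $v$ by any $n\in N$, together with the rigidity of quasi-coherent modules under the equivalence of Corollary \ref{2.4}.
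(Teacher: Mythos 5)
Your proof is correct, and its skeleton is the same as the paper's: both reduce the statement to Theorem \ref{TTT} by identifying morphisms $\mathcal M^*\to\mathcal R$ with elements $m\in M$ via Theorem \ref{prop4}, noting $\Ima(f_m)_R=M^*(m)$, and then matching quasi-coherence of $\Coker f_m$ against the trace equation $m\in M^*(m)\cdot M$. Where you diverge is the middle translation step. The paper invokes Lemma \ref{lemar2} to convert ``$\Coker f_m$ is quasi-coherent'' into ``$f_m$ factors through the quasi-coherent module associated with $\Ima (f_m)_R$'', and then uses $\Hom_{\mathcal R}(\mathcal M^*,\mathcal I)=I\otimes_R M$ to see that factoring through $\mathcal I$, for $I=M^*(m)$, means exactly $m\in I\cdot M$. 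You instead unwind quasi-coherence of the cokernel pointwise, showing it amounts to $\Ima(f_m)_N=M^*(m)\cdot N$ for every $N$, and then check by explicit element computations that this family of identities is equivalent to the single equation $m\in M^*(m)\cdot M$ (the specialization $N=M$, $w=\Id_M$ in one direction; the maps $w(x)=v(x)\cdot n$ together with $R$-linearity in the other). Your route is slightly longer but more self-contained: Lemma \ref{lemar2} is stated under the hypothesis that $\mathbb M=\mathcal M$ is left-exact, i.e.\ that $M$ is flat, which is not available at the outset of the direction ``trace $\Rightarrow$ flat strict Mittag-Leffler'', so your direct verification actually sidesteps a small gap in the paper's own argument. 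One cosmetic remark: the fact that a quasi-coherent $\Coker f_m$ must coincide, via the natural map, with the quasi-coherent module associated with $R/M^*(m)$ is better attributed to Proposition \ref{tercer} (a morphism out of a quasi-coherent module is determined by its value at $R$) than to Corollary \ref{2.4}, but the substance is right.
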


\begin{proof} Consider the canonical isomorphism $M\overset{\text{\ref{prop4}}}=\Hom_{\mathcal R}(\mathcal M^*,\mathcal R)$, $m\mapsto \tilde m$ (where $\tilde m(w):=w(m)$). Obviouslly, $\Ima \tilde m_R=M^*(m)$.
Let $I\subseteq R$ be an ideal, $\tilde m$ factors through $\mathcal I$ iff $m\in I\cdot M$, as it is easy to see taking into account the following diagram
$$\xymatrix{\Hom_{\mathcal R}(\mathcal M^*,\mathcal I) \ar[r] \ar@{=}[d]^-{\text{\ref{prop4}}} &
\Hom_{\mathcal R}(\mathcal M^*,\mathcal R) \ar@{=}[d]^-{\text{\ref{prop4}}} \\ I\otimes_R M \ar[r] & M}$$
Then, $\tilde m$ factors the quasi-coherent module associated with $\Ima \tilde m_R$ if and only if 
$m\in M^*(m)\cdot M$. We are done, by Lemma \ref{lemar2} and Theorem \ref{TTT} .

\end{proof}

\section{Appendix: Abelian subcategory generated by module schemes}

Let $\mathbb M$ be an $\R$-module and $\mathbb P=\oplus_{i\in I} \mathcal N_i^*$, then 
$$\Hom_{\R}(\mathbb P,\mathbb M)=\prod_{i\in I} \Hom_{\R}(\mathcal N_i^*,\mathbb M)=\prod_{i\in I} \mathbb M(N_i).$$
Hence, $\mathbb P$ is a projective $\R$-module. Observe that $\mathbb P$ is a left-exact functor and 
$\mathbb P=\oplus_{i\in I} \mathcal N_i^*\subseteq \mathcal N^*$, where
$N=\oplus N_i$. Therefore, $\mathbb P$ is a left-exact SML $\R$-module.

\begin{notation} An infinite direct sum of modules schemes will be often denoted by $\mathbb P$
($\mathbb P'$, $\mathbb P_1$, etc.)
\end{notation}

Recall the definition of $\langle\text{\sl ModSch}\rangle $ (Definition \ref{DModSch}).
Given $\mathbb M,\mathbb M'\in \langle\text{\sl ModSch}\rangle $, consider an epimorphism $\mathbb P\to \mathbb M$, then 
$$\Hom_{\RR}(\mathbb M,\mathbb M')\subseteq \Hom_{\RR}(\mathbb P,\mathbb M')$$
and $\Hom_{\RR}(\mathbb M,\mathbb M')$ is a set, that is, 
$\langle\text{\sl ModSch}\rangle $ is a locally small category.

\begin{examples} Quasi-coherent modules and module schemes
belong to $\langle\text{\sl ModSch}\rangle $.

If $\mathbb M$ is a left-exact functor and there exists an epimorphism
$\mathbb P\to \mathbb M$, then $\mathbb M$ belongs to $\langle\text{\sl ModSch}\rangle $, by Theorem \ref{Lmodule}.

Left-exact SML $\R$-modules belong to  $\langle\text{\sl ModSch}\rangle $, by Theorem \ref{blabla}.

\end{examples}

\begin{proposition} \label{6.2} Let $f\colon \mathbb M_1\to \mathbb M_2$ be an $\R$-module morphism. If $\mathbb M_1,\mathbb M_2\in \langle\text{\sl ModSch}\rangle $, then 
$\Coker f\in \langle\text{\sl ModSch}\rangle $.

\end{proposition}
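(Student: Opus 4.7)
The plan is to build an explicit presentation of $\Coker f$ by direct sums of module schemes, by combining the presentation of $\mathbb M_2$ with an epimorphism onto $\mathbb M_1$, using that arbitrary direct sums of module schemes are projective in the category of $\R$-modules.

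First, by the hypothesis $\mathbb M_2 \in \langle\text{\sl ModSch}\rangle$, fix an exact sequence
$$\mathbb P \xrightarrow{\,h\,} \mathbb P' \xrightarrow{\,\pi\,} \mathbb M_2 \longrightarrow 0,$$
with $\mathbb P,\mathbb P'$ direct sums of module schemes; and, by the hypothesis on $\mathbb M_1$, fix an epimorphism $p\colon \mathbb P_1\twoheadrightarrow \mathbb M_1$ from a direct sum of module schemes. Recall from the remark opening this Appendix that any such direct sum $\mathbb P_1=\oplus_i \mathcal N_i^*$ is a projective $\R$-module, because
$$\Hom_{\R}(\mathbb P_1,-) = \prod_i\Hom_{\R}(\mathcal N_i^*,-) = \prod_i (-)(N_i)$$
is exact. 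Therefore the composite $f\circ p\colon \mathbb P_1\to \mathbb M_2$ lifts along the epimorphism $\pi$ to an $\R$-module morphism $g\colon \mathbb P_1\to \mathbb P'$ with $\pi\circ g = f\circ p$.

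Next, I claim that the morphism $(g,h)\colon \mathbb P_1\oplus\mathbb P\to\mathbb P'$ has cokernel canonically isomorphic to $\Coker f$; this then furnishes an exact sequence
$$\mathbb P_1\oplus \mathbb P \xrightarrow{(g,h)} \mathbb P' \longrightarrow \Coker f \longrightarrow 0$$
of the required form and finishes the proof. Since cokernels in the category of $\R$-modules are computed pointwise, it suffices to compute the cokernel in abelian groups after evaluating at an arbitrary $R$-module $N$, where it reduces to the straightforward identification
$$\mathbb P'(N)\big/\bigl(\Ima g_N+\Ima h_N\bigr)=\bigl(\mathbb P'(N)/\Ima h_N\bigr)\big/\Ima(\bar g_N)=\mathbb M_2(N)/\Ima f_N,$$
where $\bar g:=\pi\circ g=f\circ p\colon \mathbb P_1\to \mathbb M_2$ and where one uses that $\Ima \bar g_N=\Ima f_N$ because $p_N$ is surjective. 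Passing back to functors yields $\Coker(g,h)=\Coker f$.

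There is no real obstacle; the only point to watch is making sure the lift $g$ exists, which is guaranteed by projectivity of $\mathbb P_1$ (and hence is the reason one first has to replace $\mathbb M_1$ by the projective cover $\mathbb P_1$ rather than working directly with $\mathbb M_1$). Everything else is a one-line diagram chase in each component.
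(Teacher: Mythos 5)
Your proof is correct and is essentially the paper's own argument: lift $f\circ p$ through the presentation of $\mathbb M_2$ using projectivity of the direct sum of module schemes, then check pointwise that $\Coker(g,h)=\Coker f$ (the paper leaves this last verification as ``easy to check,'' which you carry out explicitly). The only cosmetic remark is that ``projective cover'' is a misnomer for what is just an epimorphism from a projective object.
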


\begin{proof} Consider the exact sequences
$$\mathbb P_1\overset{\pi_1}\to \mathbb M_1\to 0, \qquad 
\mathbb P_2'\overset{i_2}\to \mathbb P_2\overset{\pi_2}\to \mathbb M_2\to 0.$$
There exists an $\R$-module morphism $g\colon \mathbb P_1\to \mathbb P_2$ such
that $\pi_2\circ g=f\circ \pi_1$, since $\mathbb P_1$ is a projective  $\R$-module. It is easy to check that
$$\Coker[\mathbb P_1\oplus \mathbb P_2'\overset{g\oplus i_2}\longrightarrow \mathbb P_2]=\Coker f.$$
Therefore, $\Coker f\in \langle\text{\sl ModSch}\rangle $.

\end{proof}

\begin{proposition} \label{6.3} Let $f\colon \mathbb P\to \mathbb P'$ be an $\R$-module morphism.
Then, $\Ker f$ is a left-exact SML module and $\Ima f\in$ $\langle\text{\sl ModSch}\rangle $.\end{proposition}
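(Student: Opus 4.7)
As noted in the preamble to this proposition, both $\mathbb P = \oplus_{i \in I} \mathcal N_i^*$ and $\mathbb P'$ are left-exact SML $\R$-modules: writing $N = \oplus_i N_i$, the monomorphism $\oplus_i \mathcal N_i^* \hookrightarrow \prod_i \mathcal N_i^* = \mathcal N^*$ combined with Proposition \ref{caca} gives the SML property, and left-exactness is evident since each $\mathcal N_i^*$ is left-exact and left-exactness is preserved under direct sums. My strategy is to establish first that $\Ker f$ inherits ``left-exact SML'' from $\mathbb P$, and then to invoke the structure theorem for such modules (Theorem \ref{blabla}) to resolve $\Ima f$ by a direct sum of module schemes.

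First I would handle $\Ker f$. Since $(\Ker f)(N)=\Ker(f_N)\subseteq \mathbb P(N)$ for every $R$-module $N$, the functor $\Ker f$ is an $\R$-submodule of $\mathbb P$, so Corollary \ref{C5.3} immediately yields that $\Ker f$ is an SML $\R$-module. For left-exactness, I would apply $\mathbb P$ and $\mathbb P'$ to an exact sequence $0\to N_1\to N_2\to N_3$, obtaining a commutative diagram of abelian groups whose two rows are left-exact, and then run the standard diagram chase: injectivity of $(\Ker f)(N_1)\to(\Ker f)(N_2)$ follows from injectivity along the top row, and exactness at $(\Ker f)(N_2)$ follows by lifting to $\mathbb P(N_1)$ and then using injectivity along the bottom row to conclude the lift lies in $\Ker(f_{N_1})$.

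Next I would deduce the claim for $\Ima f$. Since $\Ker f$ is now known to be a left-exact SML $\R$-module, Theorem \ref{blabla} expresses $\Ker f$ as a direct limit of submodule schemes $\mathcal W_k^*\subseteq \Ker f$, which in particular yields an epimorphism $\mathbb P'':=\oplus_k \mathcal W_k^*\twoheadrightarrow \Ker f$. Composing with $\Ker f\hookrightarrow \mathbb P$ gives a morphism $\mathbb P''\to\mathbb P$ whose image equals $\Ker f$; hence the sequence $\mathbb P''\to\mathbb P\to \Ima f\to 0$ is exact. This is precisely a presentation of $\Ima f$ of the form required by Definition \ref{DModSch}, so $\Ima f\in\langle\text{\sl ModSch}\rangle$. (Equivalently, since $\mathbb P''$ and $\mathbb P$ both lie in $\langle\text{\sl ModSch}\rangle$, one can apply Proposition \ref{6.2} directly to the morphism $\mathbb P''\to\mathbb P$.)

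The main obstacle is really just verifying that $\Ker f$ is simultaneously SML and left-exact; the SML part is immediate from Corollary \ref{C5.3}, so the genuine (though routine) work is the diagram chase establishing left-exactness. Once $\Ker f$ has been identified as a left-exact SML module, Theorem \ref{blabla} does all the remaining work, and no further subtleties arise in exhibiting $\Ima f$ as a cokernel of a morphism between direct sums of module schemes.
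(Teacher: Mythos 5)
Your proof is correct and follows essentially the same route as the paper: $\Ker f$ is left-exact because $\mathbb P$ and $\mathbb P'$ are, it is SML by Corollary \ref{C5.3} as a submodule of the SML module $\mathbb P$, and then $\Ima f=\Coker[\Ker f\to\mathbb P]$ lies in $\langle\text{\sl ModSch}\rangle$ because $\Ker f$ does (via Theorem \ref{blabla}) together with Proposition \ref{6.2}. Your explicit construction of the presentation $\mathbb P''\to\mathbb P\to\Ima f\to 0$ just unwinds what the paper leaves implicit.
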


\begin{proof}  $\Ker f$ is  left-exact since $\mathbb P$ and $\mathbb P'$ are left-exact.
By Corollary \ref{C5.3}, $\Ker f$ is an SML $\R$-module, since $\mathbb P$  is an SML $\R$-module.
By Proposition \ref{6.2}, $\Ima f=\Coker[\Ker f\to \mathbb P]$ belongs to 
$\langle\text{\sl ModSch}\rangle $.

\end{proof}

\begin{proposition} \label{6.3B} Let $0\to \mathbb M_1\to \mathbb M_2\overset\pi \to \mathbb M_3\to 0$ be an exact sequence of $\R$-module morphisms. If $\mathbb M_1,\mathbb M_3\in \langle\text{\sl ModSch}\rangle $ then $\mathbb M_2\in \langle\text{\sl ModSch}\rangle$.
\end{proposition}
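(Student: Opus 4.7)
My plan is to use a horseshoe-style construction to glue presentations of $\mathbb M_1$ and $\mathbb M_3$ into one for $\mathbb M_2$. Fix exact sequences $\mathbb P'_1\to \mathbb P_1\overset{\pi_1}\to \mathbb M_1\to 0$ and $\mathbb P'_3\to \mathbb P_3\overset{\pi_3}\to \mathbb M_3\to 0$ witnessing that $\mathbb M_1,\mathbb M_3\in\langle\text{\sl ModSch}\rangle$. By the remarks opening this section, each $\mathbb P_k$ (direct sum of module schemes) is a projective $\R$-module, so the epimorphism $\pi\colon\mathbb M_2\twoheadrightarrow\mathbb M_3$ admits a lift $g\colon \mathbb P_3\to\mathbb M_2$ with $\pi\circ g=\pi_3$. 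Writing $i\colon\mathbb M_1\hookrightarrow \mathbb M_2$ for the given monomorphism, I define $\phi\colon \mathbb P_1\oplus\mathbb P_3\to \mathbb M_2$ by $\phi(p_1,p_3):=i(\pi_1(p_1))+g(p_3)$; a pointwise diagram chase using that $\pi$ and $\pi_1$ are epic shows that $\phi$ is an epimorphism.

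Next, I apply the snake lemma to the commutative diagram with exact rows
$$\begin{array}{ccccccccc} 0 & \to & \mathbb P_1 & \to & \mathbb P_1\oplus\mathbb P_3 & \to & \mathbb P_3 & \to & 0 \\ & & \downarrow\pi_1 & & \downarrow\phi & & \downarrow\pi_3 & & \\ 0 & \to & \mathbb M_1 & \to & \mathbb M_2 & \to & \mathbb M_3 & \to & 0 \end{array}$$
which is legitimate in the category of $\R$-modules because kernels and cokernels are computed pointwise. Since all three vertical arrows are epimorphisms, the six-term connecting sequence collapses to a short exact sequence $0\to \Ker\pi_1\to\Ker\phi\to\Ker\pi_3\to 0$. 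The given presentations furnish epimorphisms $\mathbb P'_1\twoheadrightarrow\Ker\pi_1$ and $\mathbb P'_3\twoheadrightarrow \Ker\pi_3$; using projectivity of $\mathbb P'_3$, I lift the second through $\Ker\phi\twoheadrightarrow \Ker\pi_3$ to a morphism $\mathbb P'_3\to\Ker\phi$, and combining it with the first I obtain a morphism $\mathbb P'_1\oplus\mathbb P'_3\to\Ker\phi$ that is surjective by one more routine diagram chase on the short exact sequence of kernels.

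Composing with the inclusion $\Ker\phi\hookrightarrow \mathbb P_1\oplus\mathbb P_3$, I arrive at the exact sequence
$$\mathbb P'_1\oplus\mathbb P'_3\to \mathbb P_1\oplus\mathbb P_3\to \mathbb M_2\to 0,$$
which exhibits $\mathbb M_2$ as an object of $\langle\text{\sl ModSch}\rangle$. The only conceptual point deserving care is that the standard abelian machinery (lifting along projectives, the snake lemma, pointwise diagram chases) is genuinely available in the category of $\R$-modules; once this is granted, the argument is simply the usual horseshoe lemma transplanted to our setting, and no real obstacle remains beyond the organizational bookkeeping.
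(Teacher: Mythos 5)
Your proof is correct and follows essentially the same route as the paper's: both lift the presentation of $\mathbb M_3$ through $\pi$ by projectivity, form the horseshoe diagram $0\to\mathbb P_1\to\mathbb P_1\oplus\mathbb P_3\to\mathbb P_3\to 0$ over the given short exact sequence, apply the snake lemma to get the short exact sequence of kernels, and then cover $\Ker\phi$ by a direct sum of module schemes via one more projective lift. Your write-up is, if anything, slightly more explicit than the paper's at the final covering step, but the argument is the same.
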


\begin{proof} Consider exact sequences $ \mathbb P_1\overset{f_1}\to \mathbb P'_1\overset{f'_1}\to \mathbb M_1\to 0$ and $ \mathbb P_3\overset{f_3}\to \mathbb P'_3\overset{f_3'}\to \mathbb M_3\to 0$. There exits a morphism $g\colon \mathbb P'_3\to \mathbb M_2$ such that $\pi\circ g=f_3'$, since $\mathbb P'_3$ is projective.
Consider the exact sequence
$$\xymatrix{0\ar[r] & \mathbb M_1\ar[r] & \mathbb M_2\ar[r] ^-\pi & \mathbb M_3\ar[r] & 0\\ 0\ar[r] & \mathbb P'_1\ar[r] \ar[u]^-{f_1'} & \mathbb P'_1\oplus\mathbb P'_3 \ar[r]  \ar[u]^-{f_1'\oplus g}& \mathbb P'_3\ar[r]  \ar[u]^-{f_3'} & 0}$$
By the snake lemma, $f_1'\oplus g$ is an epimorphism and 
we have the exact sequence of morphisms
$$0\to \Ker f_1'\to \Ker(f_1'\oplus g)\to \Ker f'_3\to 0,$$ where $\Ker f'_1=\Ima f_1$ and $\ker f'_3=\Ima f_3$ belong to
$\langle\text{\sl ModSch}\rangle $ by Proposition \ref{6.3}. Then, again there exists
and epimorphism $\mathbb P''\to \Ker(f_1'\oplus g)$ and $\mathbb M_2\in \langle\text{\sl ModSch}\rangle$.

\end{proof}

\begin{proposition} \label{6.4B} Let $f\colon \mathbb P\to \mathbb M$ be an $\R$-module epimorphism.  If $\mathbb M \in \langle\text{\sl ModSch}\rangle $, then 
$\Ker f\in \langle\text{\sl ModSch}\rangle $.
\end{proposition}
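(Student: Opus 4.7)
The plan is to realize $\Ker f$ as a cokernel between two objects we already know lie in $\langle\text{\sl ModSch}\rangle$, and then appeal to Proposition \ref{6.2}.

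Since $\mathbb M \in \langle\text{\sl ModSch}\rangle$, fix a presentation $\mathbb P_1 \xrightarrow{i} \mathbb P_2 \xrightarrow{\pi_2} \mathbb M \to 0$ with $\mathbb P_1, \mathbb P_2$ direct sums of module schemes. Because $\mathbb P$ is a projective $\R$-module (Appendix) and $\pi_2$ is an epimorphism, the morphism $f\colon \mathbb P \to \mathbb M$ lifts to some $g\colon \mathbb P \to \mathbb P_2$ satisfying $\pi_2 \circ g = f$. Consider now the $\R$-module morphism
$$ h := (g, -i)\colon \mathbb P \oplus \mathbb P_1 \longrightarrow \mathbb P_2, \qquad (p, p_1) \longmapsto g(p) - i(p_1). $$
Since $\mathbb P \oplus \mathbb P_1$ and $\mathbb P_2$ are direct sums of module schemes, Proposition \ref{6.3} gives $\Ker h \in \langle\text{\sl ModSch}\rangle$, and likewise $\Ker i \in \langle\text{\sl ModSch}\rangle$.

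Next I will exhibit $\Ker f$ as the image of $\Ker h$ under the projection $p_\mathbb P\colon \mathbb P \oplus \mathbb P_1 \to \mathbb P$. Evaluating on an arbitrary $R$-module, an element $p$ of $\mathbb P$ belongs to $p_\mathbb P(\Ker h)$ iff there exists $p_1 \in \mathbb P_1$ with $g(p) = i(p_1)$, iff $g(p) \in \Ima i = \Ker \pi_2$, iff $f(p) = \pi_2 g(p) = 0$; hence $p_\mathbb P(\Ker h) = \Ker f$. Moreover, $(0, p_1) \in \Ker h$ precisely when $i(p_1) = 0$, so the kernel of $p_\mathbb P|_{\Ker h}$ is canonically $\Ker i$. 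This yields a short exact sequence of $\R$-modules
$$ 0 \longrightarrow \Ker i \longrightarrow \Ker h \longrightarrow \Ker f \longrightarrow 0. $$
Therefore $\Ker f = \Coker[\Ker i \to \Ker h]$, and since both $\Ker i$ and $\Ker h$ lie in $\langle\text{\sl ModSch}\rangle$, Proposition \ref{6.2} gives $\Ker f \in \langle\text{\sl ModSch}\rangle$, as desired.

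The only delicate step is the construction of $h$ and the verification that the projection $p_\mathbb P$ realizes $\Ker f$ as the cokernel of $\Ker i \hookrightarrow \Ker h$; once one recognizes that $\Ker f = g^{-1}(\Ima i)$, this pullback-style construction is forced, and everything else reduces to invoking Propositions \ref{6.2} and \ref{6.3}, which have already done the heavy lifting. No alternative use of Proposition \ref{6.3B} seems to help, because there the two outer terms must be in $\langle\text{\sl ModSch}\rangle$ to conclude about the middle one, which is the opposite of what we need here.
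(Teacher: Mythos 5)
Your proof is correct, but it takes a genuinely different route from the paper's. Both arguments start the same way: choose a presentation $\mathbb P_1\to\mathbb P_2\to\mathbb M\to 0$ and lift $f$ through $\pi_2$ to $g\colon\mathbb P\to\mathbb P_2$ using projectivity of $\mathbb P$. The paper then applies the snake lemma to the diagram with rows $0\to\Ker f\to\mathbb P\to\mathbb M\to 0$ and $0\to 0\to\mathbb P_2\to\mathbb P_2\to 0$, obtaining the four-term exact sequence $0\to\Ker g\to\Ker\pi_2\to\Ker f\to\Coker g\to 0$; this places $\Ker f$ in the \emph{middle} of a short exact sequence whose outer terms ($\Coker[\Ker g\to\Ker\pi_2]$ and $\Coker g$) are handled by Propositions \ref{6.3} and \ref{6.2}, so the paper must invoke Proposition \ref{6.3B} (closure under extensions) to finish --- contrary to your closing remark that \ref{6.3B} cannot help here. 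You instead form the pullback $\Ker h=\mathbb P\times_{\mathbb P_2}\mathbb P_1$ and exhibit $\Ker f$ as the \emph{quotient} in $0\to\Ker i\to\Ker h\to\Ker f\to 0$, so that only Propositions \ref{6.2} and \ref{6.3} are needed. Your pointwise verification of this sequence is sound (exactness of $\Ima i=\Ker\pi_2$ evaluated at each $N$ gives surjectivity of $\Ker h\to\Ker f$, and the kernel of the projection is visibly $\Ker i$), and your appeal to Proposition \ref{6.3} for $\Ker h,\Ker i\in\langle\text{\sl ModSch}\rangle$ matches the paper's own use of that proposition. The net effect is a slightly leaner dependency graph: your argument would let one prove \ref{6.4B} without \ref{6.3B}, whereas the paper's order of lemmas requires it.
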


\begin{proof} Consider an exact sequence of $\R$-module morphisms
$$
\mathbb P_1'\overset{i_1}\to \mathbb P_1\overset{\pi_1}\to \mathbb M\to 0.$$
There exists a morphism $g\colon \mathbb P\to \mathbb P_1$ such that 
$\pi_1\circ g=f$, since $\mathbb P$ is a projective $\R$-module. Consider the exact sequence of $\R$-module morphisms
$$\xymatrix{0 \ar[r] & \Ker f \ar[r] & \mathbb P \ar[r]^-f & \mathbb M \ar[r] & 0\\
0 \ar[r] & 0 \ar[r] \ar[u] & \mathbb P_1 \ar@{=}[r] \ar[u]^-g & \mathbb P_1  \ar[u]^-{\pi_1} \ar[r]  & 0}
$$
By the snake lemma, we have the exact sequence of $\R$-module morphisms
$$0\to \Ker g \to \Ker \pi_1 \to \Ker f\to \Coker g\to 0$$
Observe that $\Ker g$ and $\Ker \pi_1=\Ima i_1 $ belong  to $\langle\text{\sl ModSch}\rangle$, by Proposition \ref{6.3}. Therefore, $\Coker[\Ker g \to \Ker \pi_1 ]$  and $\Coker g$ belong to $\langle\text{\sl ModSch}\rangle$ by Proposition \ref{6.2}.
Hence, $\Ker f\in \langle\text{\sl ModSch}\rangle$ by Proposition \ref{6.3B}.

\end{proof}

\begin{proposition} \label{6.5} Let $f\colon \mathbb M\to \mathbb M'$ be an $\R$-module epimorphism.  If $\mathbb M,\mathbb M'\in \langle\text{\sl ModSch}\rangle $, then 
$\Ker f\in \langle\text{\sl ModSch}\rangle $.
\end{proposition}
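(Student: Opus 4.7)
The plan is to reduce the general case to the already-handled case where the source of the epimorphism is a direct sum of module schemes (Proposition \ref{6.4B}). First, choose an epimorphism $\pi\colon \mathbb P\to \mathbb M$, where $\mathbb P$ is a direct sum of module schemes; this exists since $\mathbb M\in\langle\text{\sl ModSch}\rangle$. Form the composition $g:=f\circ \pi\colon \mathbb P\to \mathbb M'$, which is still an epimorphism because both $\pi$ and $f$ are. Since $\mathbb M$ and $\mathbb M'$ both lie in $\langle\text{\sl ModSch}\rangle$, Proposition \ref{6.4B} applies twice to give that both
$$\mathbb K_0:=\Ker \pi \qquad \text{and}\qquad \mathbb K:=\Ker g$$
belong to $\langle\text{\sl ModSch}\rangle$.

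Next, I would exhibit $\Ker f$ as an object built from $\mathbb K_0$ and $\mathbb K$ by operations the subcategory already tolerates. Since $g=f\circ \pi$, the inclusion $\mathbb K_0\hookrightarrow \mathbb K$ is automatic (everything killed by $\pi$ is killed by $g$), and $\pi$ restricts to a morphism $\bar\pi\colon \mathbb K\to \Ker f$. This restriction is surjective because, given $m\in \Ker f(S)$, any lift $p\in \mathbb P(S)$ with $\pi_S(p)=m$ automatically satisfies $g_S(p)=0$, hence $p\in\mathbb K(S)$; and the kernel of $\bar\pi$ is precisely $\mathbb K_0$. This yields the exact sequence
$$0\to \mathbb K_0\to \mathbb K\to \Ker f\to 0,$$
i.e.\ $\Ker f\simeq \Coker(\mathbb K_0\hookrightarrow \mathbb K)$.

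Finally, I would invoke Proposition \ref{6.2}: a cokernel of a morphism between two objects of $\langle\text{\sl ModSch}\rangle$ again lies in $\langle\text{\sl ModSch}\rangle$. Applied to $\mathbb K_0\to \mathbb K$, this gives $\Ker f\in\langle\text{\sl ModSch}\rangle$, finishing the proof. The only substantive step is the identification of $\Ker f$ with $\mathbb K/\mathbb K_0$, but this is a standard diagram chase (or a one-line application of the snake lemma to the pair of exact rows with rightmost columns $\mathbb M\overset{f}\to \mathbb M'$ and identity on $\mathbb P$); no genuine obstacle is expected.
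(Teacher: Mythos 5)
Your proposal is correct and follows essentially the same route as the paper: an epimorphism $\pi\colon \mathbb P\to \mathbb M$, two applications of Proposition \ref{6.4B} to $\pi$ and $f\circ\pi$, the snake-lemma exact sequence $0\to \Ker\pi\to \Ker(f\circ\pi)\to \Ker f\to 0$, and Proposition \ref{6.2} to conclude. No substantive differences.
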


\begin{proof} Consider an epimorphism $\pi\colon \mathbb P\to \mathbb M$
and the commutative diagram
$$\xymatrix{0 \ar[r] & \mathbb \Ker \pi \ar[r] \ar[d] & \mathbb P \ar@{=}[d]  \ar[r]^-\pi  & \mathbb M \ar[r] \ar[d]^-f & 0\\ 0 \ar[r] & \mathbb \Ker (f\circ \pi) \ar[r] & \mathbb P \ar[r]^-{f\circ \pi}  & \mathbb M' \ar[r] & 0}$$
By Proposition \ref{6.4B}, $\Ker \pi,\Ker(f\circ \pi)\in  \langle\text{\sl ModSch}\rangle $.
By the snake lemma, we have the exact sequence
$$0\to \Ker\pi\to \Ker (f\circ \pi)\to \Ker f\to 0$$
By the Proposition \ref{6.2}, $ \Ker f\in  \langle\text{\sl ModSch}\rangle $.

\end{proof}

\begin{proposition} \label{6.6} Let $\mathbb M,\mathbb M'\in \langle\text{\sl ModSch}\rangle $ and let 
$f\colon \mathbb M\to \mathbb M'$ be an $\R$-module morphism.  Then, 
$\Ker f\in \langle\text{\sl ModSch}\rangle $.
\end{proposition}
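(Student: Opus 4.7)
The plan is to factor $f$ through its image and reduce the claim to two applications of Proposition \ref{6.5}. Concretely, write $f$ as the composition
$$\mathbb M \twoheadrightarrow \Ima f \hookrightarrow \mathbb M' \, .$$
Once we know $\Ima f \in \langle\text{\sl ModSch}\rangle$, the first arrow $\mathbb M \twoheadrightarrow \Ima f$ is an $\R$-module epimorphism whose source and target both lie in $\langle\text{\sl ModSch}\rangle$, so Proposition \ref{6.5} delivers $\Ker f = \Ker[\mathbb M\to\Ima f] \in \langle\text{\sl ModSch}\rangle$. So the only real content is to establish that $\Ima f$ belongs to $\langle\text{\sl ModSch}\rangle$.

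For that, I would use Proposition \ref{6.2} to note that $\Coker f \in \langle\text{\sl ModSch}\rangle$. The quotient map $\mathbb M' \twoheadrightarrow \Coker f$ is then an epimorphism between two objects of $\langle\text{\sl ModSch}\rangle$, so a first application of Proposition \ref{6.5} gives
$$\Ima f = \Ker[\mathbb M' \to \Coker f] \in \langle\text{\sl ModSch}\rangle \, .$$
Combined with the observation of the previous paragraph, this finishes the proof.

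There is no significant obstacle here: the argument is a formal two-step diagram chase that just chains together Propositions \ref{6.2} and \ref{6.5}, which have already been proven using the more delicate snake-lemma manipulations. The closest thing to a subtlety is making sure Proposition \ref{6.5} is available in the form needed (that is, for epimorphisms between arbitrary objects of $\langle\text{\sl ModSch}\rangle$ rather than merely for those with a projective source), but this is precisely the statement of Proposition \ref{6.5}. No new constructions or exact sequences beyond those in the quoted results are required.
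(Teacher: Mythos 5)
Your argument is correct and is essentially identical to the paper's own proof: both obtain $\Coker f\in \langle\text{\sl ModSch}\rangle$ from Proposition \ref{6.2}, then apply Proposition \ref{6.5} twice, first to $\mathbb M'\twoheadrightarrow\Coker f$ to get $\Ima f$, and then to $\mathbb M\twoheadrightarrow\Ima f$ to get $\Ker f$. No issues.
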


\begin{proof} By Proposition \ref{6.2}, $\Coker f\in \langle\text{\sl ModSch}\rangle $.
By Proposition \ref{6.5}, $\Ima f=\Ker[\mathbb M'\to \Coker f]\in \langle\text{\sl ModSch}\rangle $. By Proposition \ref{6.5}, $\Ker f=\Ker[\mathbb M\to \Ima f]\in \langle\text{\sl ModSch}\rangle $.
\end{proof}

\begin{proposition} If $\mathbb M_i\in \langle\text{\sl ModSch}\rangle $ for any $i\in I$, then $\oplus_{i\in I} \mathbb M_i\in \langle\text{\sl ModSch}\rangle $.
\end{proposition}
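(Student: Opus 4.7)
The plan is to take the direct sum of the presentations provided by the definition of $\langle\text{\sl ModSch}\rangle$. For each $i\in I$, fix an exact sequence
$$\bigoplus_{j\in J_i} \mathcal N_{i,j}^* \xrightarrow{f_i} \bigoplus_{k\in K_i} \mathcal N_{i,k}^* \to \mathbb M_i \to 0.$$
Form the direct sum over $i\in I$ and flatten the double indexing via the disjoint unions $\sqcup_{i} J_i$ and $\sqcup_{i} K_i$ to obtain the candidate presentation
$$\bigoplus_{(i,j)} \mathcal N_{i,j}^* \xrightarrow{\oplus_i f_i} \bigoplus_{(i,k)} \mathcal N_{i,k}^* \to \bigoplus_{i\in I}\mathbb M_i \to 0,$$
whose first two terms are manifestly direct sums of module schemes. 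It then remains to verify that this sequence is exact, after which $\oplus_{i}\mathbb M_i$ belongs to $\langle\text{\sl ModSch}\rangle$ by definition.

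Exactness is immediate from the pointwise nature of the category of $\R$-modules: as noted right after the definition of $\R$-module, kernels, cokernels and colimits are all computed pointwise on each $R$-module $N$. Evaluating at any $N$ reduces the claim to the fact that arbitrary direct sums of exact sequences of abelian groups are exact, which is standard. There is no real obstacle in this proof; the whole content is the trivial reindexing observation that a direct sum of direct sums of module schemes is again a direct sum of module schemes. Hence $\oplus_{i\in I}\mathbb M_i\in \langle\text{\sl ModSch}\rangle$, as required.
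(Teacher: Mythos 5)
Your proof is correct and fills in exactly the reindexing-and-pointwise-exactness argument that the paper dismisses with ``It is obvious.'' Same approach; nothing further to add.
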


\begin{proof} It is obvious.
\end{proof}

\begin{proposition} \label{6.10} If $\mathbb M_i\in \langle\text{\sl ModSch}\rangle $ for any $i\in I$, then $\prod_i \mathbb M_i \in \langle\text{\sl ModSch}\rangle $.
\end{proposition}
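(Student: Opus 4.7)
The idea is to exploit the exactness of products in the category of abelian groups in order to apply Proposition \ref{6.2} to a product of presentations of the $\mathbb M_i$.

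For each $i\in I$, fix an exact sequence $\mathbb P_i' \to \mathbb P_i \to \mathbb M_i \to 0$ with $\mathbb P_i$ and $\mathbb P_i'$ direct sums of module schemes. Since products in $\mathrm{Ab}$ are exact and limits of $\R$-modules are computed pointwise, the sequence
$$\prod_i \mathbb P_i' \longrightarrow \prod_i \mathbb P_i \longrightarrow \prod_i \mathbb M_i \longrightarrow 0$$
is again exact. In view of Proposition \ref{6.2}, it is enough to check that $\prod_i \mathbb P_i$ and $\prod_i \mathbb P_i'$ belong to $\langle\text{\sl ModSch}\rangle$.

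As observed at the opening of this appendix, every direct sum of module schemes is a left-exact SML $\R$-module. Products of left-exact functors are left-exact, since left-exactness amounts to commuting with finite limits and products commute with all limits; moreover, products of SML modules are SML by Proposition \ref{prdsml}. Hence $\prod_i \mathbb P_i$ and $\prod_i \mathbb P_i'$ are themselves left-exact SML $\R$-modules, and the examples following Definition \ref{DModSch} then place them in $\langle\text{\sl ModSch}\rangle$.

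The delicate step is the last membership claim. Unpacking the cited example via Theorem \ref{blabla}, one represents a left-exact SML $\R$-module $\mathbb K$ as a direct limit of its submodule schemes $\{\mathcal W_\alpha^*\}$, obtaining an epimorphism $\oplus_\alpha \mathcal W_\alpha^* \twoheadrightarrow \mathbb K$. The kernel of this map is SML (a subobject of an SML, by Corollary \ref{C5.3}) and also left-exact, being the kernel of a morphism between two left-exact $\R$-modules; hence one may iterate once more to produce $\oplus_\beta \mathcal U_\beta^* \to \oplus_\alpha \mathcal W_\alpha^* \to \mathbb K \to 0$, the desired presentation. Verifying that the kernel really inherits both properties simultaneously is where the technical care is concentrated; once it is in place, combining with Proposition \ref{6.2} immediately yields $\prod_i \mathbb M_i \in \langle\text{\sl ModSch}\rangle$.
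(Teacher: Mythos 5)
Your proof is correct and follows essentially the same route as the paper: take the product of the presentations, use exactness of products together with Proposition \ref{6.2} to reduce to showing $\prod_i\mathbb P_i,\prod_i\mathbb P'_i\in\langle\text{\sl ModSch}\rangle$, and observe that these products are left-exact SML $\R$-modules by Proposition \ref{prdsml}, hence belong to $\langle\text{\sl ModSch}\rangle$. Your final paragraph simply makes explicit the justification that the paper delegates to the Examples via Theorem \ref{blabla}, and that elaboration is sound.
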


\begin{proof} It is sufficient to prove that $\prod_{i\in I}\mathbb  P_i \in \langle\text{\sl ModSch}\rangle $. $\prod_{i\in I}\mathbb  P_i $ is a left-exact SML $\R$-module 
by Proposition \ref{prdsml}. We are done.
\end{proof}

Given $\mathcal M^*$, consider a free presentation of $M$, 
$\oplus_I R\to\oplus_J R\to M\to 0$. Then, we have an exact sequence $0\to \mathcal M^*\to \prod_J \mathcal R\to\prod_I\mathcal R$. Now, the following theorem is immediate. 

\begin{theorem} \label{T6.11} $\langle\text{\sl ModSch}\rangle $ is a bicomplete, locally small and  abelian category. Besides, $\langle\text{\sl ModSch}\rangle$ is the smallest full subcategory of the category of $\R$-modules containing $\mathcal R$  that is stable by kernels, cokernels, direct limits,  inverse limits and isomorphims (that is, if an $\RR$-module is isomorphic to an object of the subcategory then it belongs to the subcategory).
\end{theorem}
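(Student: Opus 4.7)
I would verify each claim in turn, drawing on the propositions already established in the section.

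First, the locally small property was observed right after Definition \ref{DModSch}: every object $\mathbb M\in\langle\text{\sl ModSch}\rangle$ admits an epimorphism $\mathbb P\to\mathbb M$ from a direct sum $\mathbb P=\oplus_{i\in I}\mathcal N_i^*$, whence $\Hom_{\RR}(\mathbb M,\mathbb M')$ injects into $\Hom_{\RR}(\mathbb P,\mathbb M')=\prod_i\mathbb M'(N_i)$, which is a set. The abelian property follows because $\langle\text{\sl ModSch}\rangle$ is a full subcategory of the (non-locally-small) abelian category of $\R$-modules, closed under kernels by Proposition \ref{6.6} and under cokernels by Proposition \ref{6.2}, and containing the zero object (for instance, $0=\Ker\Id_{\mathcal R}$). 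Kernels, cokernels and the Noether isomorphisms computed inside the ambient category automatically satisfy their universal properties inside $\langle\text{\sl ModSch}\rangle$ by fullness.

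Second, for bicompleteness I would use that $\langle\text{\sl ModSch}\rangle$ is closed under arbitrary direct sums (the proposition preceding Proposition \ref{6.10}) and arbitrary products (Proposition \ref{6.10}). The standard presentation writes any colimit as the cokernel of a map $\oplus_{\alpha}\mathbb M_{s(\alpha)}\to \oplus_{i}\mathbb M_i$ between direct sums, and any limit as the kernel of a map between products $\prod_i\mathbb M_i\to\prod_\alpha \mathbb M_{t(\alpha)}$. By Propositions \ref{6.2} and \ref{6.6} these (co)kernels lie in $\langle\text{\sl ModSch}\rangle$, and they solve the universal problem inside the subcategory by fullness, so $\langle\text{\sl ModSch}\rangle$ is both complete and cocomplete.

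Finally, for the minimality, I would let $\mathcal C$ be any full subcategory of the $\R$-modules that contains $\mathcal R$ and is stable under kernels, cokernels, direct limits, inverse limits and isomorphisms. Closure under these (co)limits puts all $\oplus_J\mathcal R$ and $\prod_I\mathcal R$ in $\mathcal C$. The exact sequence $0\to\mathcal M^*\to\prod_J\mathcal R\to\prod_I\mathcal R$ produced immediately before the theorem from a free presentation of $M$ exhibits any module scheme $\mathcal M^*$ as a kernel of a morphism between objects of $\mathcal C$, so all module schemes lie in $\mathcal C$. Closure under direct sums then yields $\oplus_{j\in J}\mathcal N_j^*\in\mathcal C$, and by Definition \ref{DModSch} every object of $\langle\text{\sl ModSch}\rangle$ is the cokernel of a morphism between two such direct sums; hence it lies in $\mathcal C$.

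No step looks genuinely difficult: the argument is essentially a bookkeeping exercise that strings together Propositions \ref{6.2}, \ref{6.6}, and \ref{6.10} together with the closure under direct sums, and then invokes the kernel presentation of module schemes for the minimality. The only mild subtlety is ensuring that the (co)limits of the ambient category genuinely realize the universal (co)limits within $\langle\text{\sl ModSch}\rangle$, which is immediate from the fullness of the inclusion.
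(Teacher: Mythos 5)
Your proposal is correct and follows essentially the same route as the paper, which simply declares the theorem ``immediate'' after establishing Propositions \ref{6.2}, \ref{6.6}, \ref{6.10}, the closure under direct sums, the local-smallness observation at the start of the appendix, and the exact sequence $0\to\mathcal M^*\to\prod_J\mathcal R\to\prod_I\mathcal R$; you have merely spelled out the bookkeeping the authors leave implicit. The one point worth recording explicitly is the reduction of arbitrary direct sums to inverse limits (finite biproducts) followed by directed colimits, so that stability under ``direct limits and inverse limits'' in the stated sense really does yield $\oplus_J\mathcal N_j^*\in\mathcal C$ in the minimality argument.
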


\begin{notation} Let $\langle\text{\sl RModSch}\rangle$ be the full subcategory of the category of right $\R$-modules whose objects are those  right $\R$-modules $\mathbb M$ for which there exists an exact sequence of $\R$-module morphisms
$$\mathbb P'\to \mathbb P\to \mathbb M\to 0$$
where $\mathbb P=\oplus_{i\in I} \mathcal N_i^*$ and $\mathbb P'=\oplus_{j\in J} \mathcal N_j^*$ (and $N_i,N_j$ are right $R$-modules).\end{notation}

\begin{proposition} If $\mathbb M\in \langle\text{\sl ModSch}\rangle $, then $\mathbb M^* \in  \langle\text{\sl RModSch}\rangle$.
\end{proposition}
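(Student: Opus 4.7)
The plan is to take a presentation of $\mathbb M$ as given by membership in $\langle\text{\sl ModSch}\rangle$, dualize it, and then recognize the resulting exact sequence as witnessing $\mathbb M^*\in\langle\text{\sl RModSch}\rangle$ via the (right-sided analogues of the) closure properties already established in the section. Concretely, start with an exact sequence
\[
\oplus_{i\in I}\mathcal N_i^* \longrightarrow \oplus_{j\in J}\mathcal N_j^* \longrightarrow \mathbb M \longrightarrow 0,
\]
and, since for each right $R$-module $S$ the functor $\Hom_{\mathcal R}(-,\mathcal S)$ is left-exact, apply $(-)^*$ to obtain the exact sequence
\[
0 \longrightarrow \mathbb M^* \longrightarrow (\oplus_{j\in J}\mathcal N_j^*)^* \longrightarrow (\oplus_{i\in I}\mathcal N_i^*)^*.
\]

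Next, I would identify the two dualized direct sums with products of quasi-coherent right $\mathcal R$-modules. For each $j$, the computation $(\mathcal N_j^*)^*(S)=\Hom_{\mathcal R}(\mathcal N_j^*,\mathcal S)=\mathcal S(N_j)=S\otimes_R N_j$, obtained by the characterizing property of module schemes used repeatedly in the paper, shows that $(\mathcal N_j^*)^*$ is precisely the quasi-coherent right $\mathcal R$-module $\mathcal N_j$. Since dualizing interchanges direct sums and direct products (because $\Hom_{\mathcal R}(\oplus_j -, -)=\prod_j\Hom_{\mathcal R}(-,-)$), the sequence becomes
\[
0 \longrightarrow \mathbb M^* \longrightarrow \prod_{j\in J}\mathcal N_j \longrightarrow \prod_{i\in I}\mathcal N_i,
\]
an exact sequence of right $\mathcal R$-modules.

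Now I would check that every quasi-coherent right $\mathcal R$-module $\mathcal N$ lies in $\langle\text{\sl RModSch}\rangle$: starting from a free presentation $\oplus_\alpha R\to\oplus_\beta R\to N\to 0$ of $N$ (as an $R$-module on the appropriate side) and tensoring, we obtain a presentation $\oplus_\alpha \mathcal R\to\oplus_\beta \mathcal R\to \mathcal N\to 0$; since $\mathcal R=\mathcal R^*$, this is a presentation by module schemes, so $\mathcal N\in\langle\text{\sl RModSch}\rangle$. Invoking the right-sided version of Proposition \ref{6.10}, the products $\prod_{j\in J}\mathcal N_j$ and $\prod_{i\in I}\mathcal N_i$ also belong to $\langle\text{\sl RModSch}\rangle$. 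Finally, by the right-sided version of Proposition \ref{6.6}, kernels of morphisms between objects of $\langle\text{\sl RModSch}\rangle$ stay in $\langle\text{\sl RModSch}\rangle$, so $\mathbb M^*=\Ker[\prod_{j\in J}\mathcal N_j\to\prod_{i\in I}\mathcal N_i]\in\langle\text{\sl RModSch}\rangle$.

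The only step requiring care—and the main potential obstacle—is the bookkeeping of left/right structures when dualizing: one has to verify consistently that $\mathcal N_j^*$ is a left $\mathcal R$-module coming from a left $R$-module $N_j$, whose dual $(\mathcal N_j^*)^*$ is the quasi-coherent right $\mathcal R$-module of that same $N_j$. Once this identification is in place, the proof is just a direct combination of reflexivity of module schemes and the abelian-category closure properties established earlier in Propositions \ref{6.6} and \ref{6.10} (for right $\mathcal R$-modules), so no deep new argument is needed.
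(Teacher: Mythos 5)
Your proof is correct and follows essentially the same route as the paper's: dualize the presentation to the exact sequence $0\to\mathbb M^*\to\prod_{j}\mathcal N_j\to\prod_{i}\mathcal N_i$, place the products in $\langle\text{\sl RModSch}\rangle$ via the right-sided version of Proposition \ref{6.10}, and conclude by closure under kernels (Proposition \ref{6.6}). The paper's proof is merely terser, leaving the kernel-closure step and the identification $(\oplus_j\mathcal N_j^*)^*=\prod_j\mathcal N_j$ with less detail than you supply.
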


\begin{proof}  Consider an exact sequence $\mathbb P'\to\mathbb P\to \mathbb M\to 0$. Dually, $0\to \mathbb M^*\to\mathbb P^*\to\mathbb P'^*$ is exact.
It is enough to prove that $\mathbb P^*, \mathbb P'^*\in  \langle\text{\sl RModSch}\rangle$.
Put $\mathbb P=\oplus_{i\in I}\mathcal N_i^*$. By Proposition \ref{6.10}, 
$\mathbb P^*=\prod_{i\in I}\mathcal N_i \in  \langle\text{\sl RModSch}\rangle$.
\end{proof}

%
%
%
%
%
%
%

\end{document}